	\tikzstyle{very densely dashed}=[dash pattern=on 5pt off 1.5pt]
	\tikzstyle{very very densely dashed}=[dash pattern=on 8pt off 1.5pt]
	\tikzstyle{middle densely dashed}=[dash pattern=on 4pt off 1.5pt]
	\tikzstyle{space densely dashed}=[dash pattern=on 0pt off 0.4pt on 10pt off 0pt]
\numberwithin{equation}{section}
\theoremstyle{plain}
\newtheorem{thm}{Theorem}[section]
\newaliascnt{theorem}{thm}
\newtheorem{theorem}[theorem]{Theorem}
\newaliascnt{prop}{thm}
\newaliascnt{lemma}{thm}
\newtheorem{lemma}[lemma]{Lemma}
\newaliascnt{problem}{thm}
\newtheorem{problem}[problem]{Problem}
\newaliascnt{corollary}{thm}
\newtheorem{corollary}[corollary]{Corollary}
\newaliascnt{proposition}{thm}
\newtheorem{proposition}[proposition]{Proposition}
\newaliascnt{conjecture}{thm}
\newtheorem{conjecture}[conjecture]{Conjecture}
\crefname{thm}{Theorem}{Theorems}
\Crefname{thm}{Theorem}{Theorems}
\crefname{theorem}{Theorem}{Theorems}
\Crefname{theorem}{Theorem}{Theorems}
\crefname{prop}{Proposition}{Propositions}
\Crefname{prop}{Proposition}{Propositions}
\crefname{lemma}{Lemma}{Lemmas}
\Crefname{lemma}{Lemma}{Lemmas}
\crefname{problem}{Problem}{Problems}
\Crefname{problem}{Problem}{Problems}
\crefname{corollary}{Corollary}{Corollaries}
\Crefname{corollary}{Corollary}{Corollaries}
\crefname{proposition}{Proposition}{Propositions}
\Crefname{proposition}{Proposition}{Propositions}
\crefname{conjecture}{Conjecture}{Conjectures}
\Crefname{conjecture}{Conjecture}{Conjectures}
\theoremstyle{remark}
\newtheorem{remark}[thm]{Remark}
\newcommand\loaddata[1]{\CatchFileDef\loadeddata{#1}{\endlinechar=-1}}
\newcommand{\@giventhatstar}[2]{\left[#1\;\middle|\;#2\right]}
\newcommand{\@giventhatnostar}[3][]{#1(#2\;#1|\;#3#1)}
\newcommand{\giventhat}{\@ifstar\@giventhatstar\@giventhatnostar}
\newcommand{\cauchy}{\mathbf{G}}
\newcommand{\E}{\mathbb{E}}
\newcommand{\R}{\mathbb{R}}
\newcommand{\N}{\mathbb{N}}
\newcommand{\Z}{\mathbb{Z}}
\newcommand{\CC}{\mathbb{C}}
\newcommand{\Pro}{\mathbb{P}}
\newcommand{\doublecumulative}{\mathcal{F}}
\newcommand{\indicator}{\mathbbm{1}}
\newcommand{\nklatki}{n}
\newcommand{\Force}{\mathcal{E}}
\newcommand{\ForceRegularized}[2]{\Force_{#1;\ #2}}
\newcommand{\Sy}[1]{\mathfrak{S}_{#1}}
\DeclareMathOperator{\Ins}{Ins}
\DeclareMathOperator{\uIns}{u-Ins}
\DeclareMathOperator{\Var}{Var}
\DeclareMathOperator{\SC}{SC}
\DeclareMathOperator{\AS}{AS}
\DeclareMathOperator{\RSK}{RSK}
\newcommand{\kerx}{\mathbbm{x}}
\newcommand{\kery}{\mathbbm{y}}
\newcommand{\kerell}{\mathbb{L}}
\newcommand{\contour}{\mathcal{C}}
\newcommand{\sniady}{the second named author\xspace}
\newcommand{\assum}[1]{Assumption~\ref{#1}}
\newcommand{\Assum}[1]{Assumption~\ref{#1}}
\newcommand{\assumptions}{Assumptions\xspace}
\newcommand{\Assumptions}{Assumptions\xspace}
\newcommand{\iid}{i.i.d.\xspace}
\newcommand{\CDF}{CDF\xspace}
\newcommand{\CDFs}{CDFs\xspace}
\newcommand{\conti}{continual\xspace}
\newcommand{\Conti}{Continual\xspace}
\newcommand{\zmienna}{\mathbf{Q}}
\newcommand{\stala}{q}
\newcommand{\sigmafield}{\mathfrak{F}_n}
\newcommand{\uwithnoise}{\mathbf{u}_n}
\begin{document}

\begin{frontmatter}
\title{Fluctuations  of Schensted row insertion}
\runtitle{Fluctuations  of Schensted row insertion}

\begin{aug}
\author[A]{\fnms{Mikołaj}~\snm{Marciniak}\ead[label=e1]{marciniak@int.pl}\orcid{0000-0003-0358-5887}},
\author[B]{\fnms{Piotr}~\snm{Śniady}\ead[label=e2]{psniady@impan.pl}\orcid{0000-0002-1356-4820}}
\address[A]{Interdisciplinary Doctoral School ``Academia Copernicana'',
    Faculty of Mathematics and Computer Science,
    \mbox{Nicolaus Copernicus University in Toru{\'n}}, 
    ul.~Fryderyka Chopina 12/18, 87-100 Toru{\'n}, Poland\printead[presep={,\ }]{e1}}

\address[B]{Institute of Mathematics, Polish Academy of Sciences,
    \mbox{ul.~\'Sniadec\-kich 8,} 00-656 Warszawa, Poland\printead[presep={,\ }]{e2}}
\end{aug}

\begin{abstract}	
We investigate asymptotic probabilistic phenomena arising from the application
of the Schensted row insertion algorithm, a key component of the
Robinson--Schensted--Knuth (RSK) correspondence, to random inputs. Our analysis
centers on a random tableau $T$ with a given shape $\lambda$, which may itself
be random or deterministic. We examine the stochastic properties of the position
of the new box created when inserting a deterministic entry into $T$.
Specifically, we focus on the fluctuations of this position around its expected
value as the size of the Young diagram $\lambda$ approaches infinity. Our
findings reveal that these fluctuations are asymptotically Gaussian, with the
mean and variance expressed in terms of Kerov's transition measure of the
diagram $\lambda$.
An important application of this analysis is the RSK algorithm applied to a
finite, long sequence of independent, identically distributed random variables.
While there remains a gap in the reasoning for this case, we present an explicit
conjecture regarding its behavior.
\end{abstract}

\begin{keyword}[class=MSC]
\kwd[Primary ]{60C05}
\kwd[; secondary ]{60F05}  
\kwd{05E10}  
\kwd{20C30}   
\kwd{60K35}  
\kwd{82C22}  
\end{keyword}

\begin{keyword}
\kwd{Robinson--Schensted--Knuth correspondence}
\kwd{Schensted row insertion} 
\kwd{random Young tableaux} 
\kwd{limit shape} 
\kwd{jeu de taquin} 
\kwd{second class particles}
\end{keyword}

\end{frontmatter}
\tableofcontents

\section{Teaser: new conjectures related to RSK algorithm  applied to random input}
\label{sec:intro}

This paper is quite extensive; to engage the reader, we begin with a teaser: two
new conjectures (\cref{conj:CLT-Plancherel} and \cref{conj:jdt}) concerning the
Robinson--Schensted--Knuth (RSK) algorithm applied to random input.

\subsection{Basic definitions}

We begin by reviewing some fundamental combinatorial concepts. For a more
comprehensive treatment of this topic, we refer the reader to the book of Fulton
\cite{Fulton1997}.

\subsubsection{Young diagrams and tableaux}
\label{sec:young-diagrams}

A Young diagram is a finite collection of boxes arranged in the positive
quadrant, aligned to the left and bottom edges. This arrangement is known as the
French convention (see \cref{subfig:french}). To each Young diagram with $\ell$
rows, we associate an integer partition $\lambda = (\lambda_1, \ldots,
\lambda_\ell)$, where $\lambda_j$ denotes the number of boxes in the $j$-th row,
counting from bottom to top. We identify a Young diagram with its corresponding
partition $\lambda$ and denote the total number of boxes by $|\lambda| =
\lambda_1 + \cdots + \lambda_\ell$.

For asymptotic problems, it is convenient to draw Young diagrams using the
\emph{Russian convention} (see \cref{subfig:russian}). This corresponds to the
coordinate system $(u,v)$, which relates to the usual French Cartesian
coordinates as follows:
\begin{equation}
	\label{eq:Russian}
	u = x - y, \qquad \qquad v = x + y.
\end{equation}

\medskip

\begin{figure}[t]
		\subfloat[]{
		\begin{tikzpicture}
			
			\begin{scope}[scale=0.5/sqrt(2),rotate=-45,draw=gray]
				
				\begin{scope}[draw=gray,rotate=45,scale=sqrt(2)]
					\fill[fill=red!10] (4,0) -- (4,1) -- (3,1) -- (3,2) -- (1,2)
					-- (1,3) -- (0,3) -- (0,0) -- cycle ;
				\end{scope}
				
				\begin{scope}[rotate=45,draw=blue,scale=sqrt(2)]
					\draw[ultra thin, loosely dashed] (0,0) grid (5,4);
				\end{scope}
				
				\draw[->,thin] (-4.5,0) -- (4.5,0)
				node[anchor=west,rotate=-45]{\textcolor{gray}{$u$}};
				
				\draw[->,thin] (0,-0.4) -- (0,9.5)
				node[anchor=south,rotate=-45]{\textcolor{gray}{$v$}};
				
				\begin{scope}[draw=blue,rotate=45,scale=sqrt(2)]
					
					\draw[->,thick] (0,0) -- (6,0) node[anchor=west]{\textcolor{blue}{$x$}};
					\foreach \x in {1, 2, 3, 4, 5}
					{ \draw (\x, -2pt) node[anchor=north] {\textcolor{blue}{\tiny{$\x$}}} -- (\x,
						2pt); }
					
					\draw[->,thick] (0,0) -- (0,5) node[anchor=south] {\textcolor{blue}{$y$}};
					\foreach \y in {1, 2, 3, 4}
					{ \draw (-2pt,\y) node[anchor=east] {\textcolor{blue}{\tiny{$\y$}}} -- (2pt,\y); }
					
					\draw[ultra thick,draw=red] (5.5,0) -- (4,0) -- (4,1) -- (3,1) --
					(3,2) -- (1,2) -- (1,3) -- (0,3) -- (0,4.5) ;
					
				\end{scope}
				
			\end{scope}
		\end{tikzpicture}
		\label{subfig:french}
	}
	\hfill
	\subfloat[]
	{
		\begin{tikzpicture}
			\begin{scope}[xshift=7cm, yshift=-0.5cm, scale=0.5]
				
				\begin{scope}[draw=gray,rotate=45,scale=sqrt(2)]
					\fill[fill=red!10] (4,0) -- (4,1) -- (3,1) -- (3,2) -- (1,2)
					-- (1,3) -- (0,3) -- (0,0) -- cycle ;
				\end{scope}
				
				\begin{scope}
					\clip (-4.5,0) rectangle (5.5,5.5);
					\draw[ultra thin, loosely dashed] (-6,0.01) grid (6,6);
				\end{scope}
				
				\draw[->,thick] (-6,0) -- (6,0) node[anchor=west]{$u$};
				\foreach \z in {-5, -4, -3, -2, -1, 1, 2, 3, 4, 5}
				{ \draw (\z, -2pt) node[anchor=north] {\tiny{$\z$}} -- (\z, 2pt); }
				
				\draw[->,thick] (0,-0.4) -- (0,6) node[anchor=south]{$v$};
				\foreach \t in {1, 2, 3, 4, 5}
				{ \draw (-2pt,\t) node[anchor=east] {\tiny{$\t$}} -- (2pt,\t); }

				\begin{scope}[draw=blue!50,rotate=45,scale=sqrt(2)]
					
					\draw[->,thin] (0,0) -- (6,0) node[anchor=west,rotate=45]
					{\textcolor{blue!50}{{$x$}}};
					
					\draw[->,thin] (0,0) -- (0,5) node[anchor=south,rotate=45]
					{\textcolor{blue!50}{{$y$}}};
					
					\draw[ultra thick,draw=red] (5.5,0) -- (4,0) -- (4,1) -- (3,1) --
					(3,2) -- (1,2) -- (1,3) -- (0,3) -- (0,4.5) ;
					
				\end{scope}
			\end{scope}
			
		\end{tikzpicture}
		\label{subfig:russian}
	}
	\caption
	{   The Young diagram $(4,3,1)$ is depicted in two conventions:
		\protect\subref{subfig:french} the French convention,
		\protect\subref{subfig:russian} the Russian convention.
		In both representations, the solid red line illustrates the diagram's profile. 
		The coordinate systems are as follows:
		$(x,y)$ for the French convention, and
		$(u,v)$ for the Russian convention.
	}
	\label{fig:french}
\end{figure}
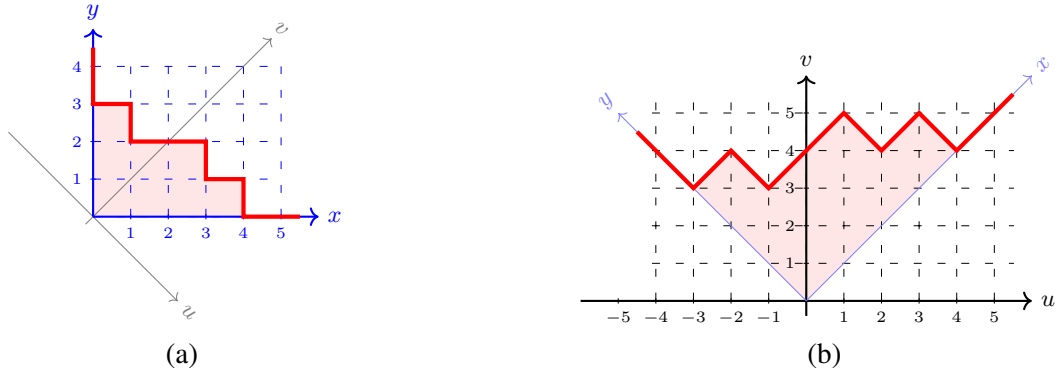

A \emph{tableau} (also known as \emph{semi-standard tableau}) is a filling of
the boxes of a Young diagram with numbers; we require that the entries should be
weakly increasing in each row (from left to right) and strictly increasing in
each column (from bottom to top). An example is given in \cref{fig:RSKa}. We say
that a tableau~$T$ of shape $\lambda$ is a \emph{standard Young tableau} if it
contains only entries from the set  $\{ 1,2,\dots,|\lambda|\}$ and each element
is used exactly once.

\subsubsection{The Schensted row insertion}

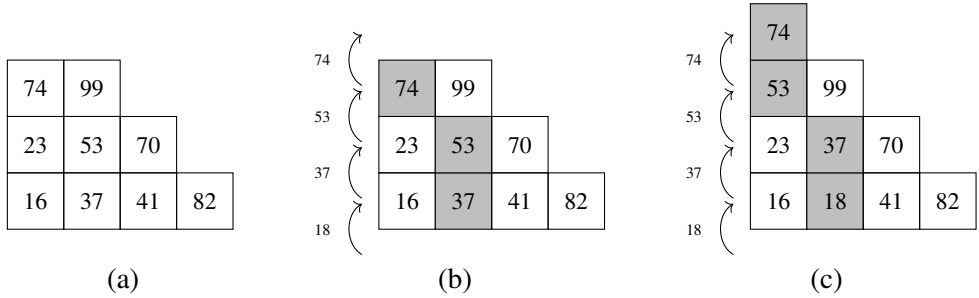
\begin{figure}[t]
		\centering
	\hfill
	\subfloat[]
	{
		\begin{tikzpicture}[scale=0.75]
			\clip (-0.1,-0.5) rectangle (4.1,4.5);
			\draw (0,0) rectangle +(1,1); 
			\node at (0.5,0.5) {16}; 
			\draw (1,0) rectangle +(1,1); 
			\node at (1.5,0.5) {37}; 
			\draw (2,0) rectangle +(1,1); 
			\node at (2.5,0.5) {41}; 
			\draw (3,0) rectangle +(1,1); 
			\node at (3.5,0.5) {82}; 
			\draw (0,1) rectangle +(1,1); 
			\node at (0.5,1.5) {23}; 
			\draw (1,1) rectangle +(1,1); 
			\node at (1.5,1.5) {53}; 
			\draw (2,1) rectangle +(1,1); 
			\node at (2.5,1.5) {70}; 
			\draw (0,2) rectangle +(1,1); 
			\node at (0.5,2.5) {74}; 
			\draw (1,2) rectangle +(1,1); 
			\node at (1.5,2.5) {99}; 	
		\end{tikzpicture}
		\label{fig:RSKa}
	}
	\hfill
	\subfloat[]
	{
		\begin{tikzpicture}[scale=0.75]
			\clip (-1.5,-0.5) rectangle (4.1,4.5);
			\fill[blue!10] (1,0) rectangle +(1,1);
			\fill[blue!10] (1,1) rectangle +(1,1);
			\fill[blue!10] (0,2) rectangle +(1,1);
			\draw (0,0) rectangle +(1,1); 
			\node at (0.5,0.5) {16}; 
			\draw[fill=lightgray] (1,0) rectangle +(1,1); 
			\node at (1.5,0.5) {37}; 
			\draw (2,0) rectangle +(1,1); 
			\node at (2.5,0.5) {41}; 
			\draw (3,0) rectangle +(1,1); 
			\node at (3.5,0.5) {82}; 
			\draw (0,1) rectangle +(1,1); 
			\node at (0.5,1.5) {23}; 
			\draw[fill=lightgray] (1,1) rectangle +(1,1); 
			\node at (1.5,1.5) {53}; 
			\draw (2,1) rectangle +(1,1); 
			\node at (2.5,1.5) {70}; 
			\draw[fill=lightgray] (0,2) rectangle +(1,1); 
			\node at (0.5,2.5) {74}; 
			\draw (1,2) rectangle +(1,1); 
			\node at (1.5,2.5) {99}; 
			\draw[->] (-0.3,-0.45) to[bend left=60] (-0.3,0.45);
			\draw[->] (0,1) +(-0.3,-0.45) to[bend left=60] +(-0.3,0.45);
			\draw[->] (0,2) +(-0.3,-0.45) to[bend left=60] +(-0.3,0.45);
			\draw[->] (0,3) +(-0.3,-0.45) to[bend left=60] +(-0.3,0.45);
			\tiny
			\node[] at (-1,0) {18};
			\node[] at (-1,1) {37};
			\node[] at (-1,2) {53};
			\node[] at (-1,3) {74};
		\end{tikzpicture}
		\label{fig:RSKb}
	}
	\hfill
	\subfloat[]
	{
		\begin{tikzpicture}[scale=0.75]
			\clip (-1.5,-0.5) rectangle (4.1,4.5);
			\fill[blue!10] (1,0) rectangle +(1,1);
			\fill[blue!10] (1,1) rectangle +(1,1);
			\fill[blue!10] (0,2) rectangle +(1,1);
			\fill[blue!10] (0,3) rectangle +(1,1);
			\draw (0,0) rectangle +(1,1); 
			\node at (0.5,0.5) {16}; 
			\draw[fill=lightgray] (1,0) rectangle +(1,1); 
			\node at (1.5,0.5) {18}; 
			\draw (2,0) rectangle +(1,1); 
			\node at (2.5,0.5) {41}; 
			\draw (3,0) rectangle +(1,1); 
			\node at (3.5,0.5) {82}; 
			\draw (0,1) rectangle +(1,1); 
			\node at (0.5,1.5) {23}; 
			\draw[fill=lightgray] (1,1) rectangle +(1,1); 
			\node at (1.5,1.5) {37}; 
			\draw (2,1) rectangle +(1,1); 
			\node at (2.5,1.5) {70}; 
			\draw[fill=lightgray] (0,2) rectangle +(1,1); 
			\node at (0.5,2.5) {53}; 
			\draw (1,2) rectangle +(1,1); 
			\node at (1.5,2.5) {99}; 
			\draw[fill=lightgray] (0,3) rectangle +(1,1); 
			\node at (0.5,3.5) {74}; 
			\draw[->] (-0.3,-0.45) to[bend left=60] (-0.3,0.45);
			\draw[->] (0,1) +(-0.3,-0.45) to[bend left=60] +(-0.3,0.45);
			\draw[->] (0,2) +(-0.3,-0.45) to[bend left=60] +(-0.3,0.45);
			\draw[->] (0,3) +(-0.3,-0.45) to[bend left=60] +(-0.3,0.45);
			\tiny
			\node[] at (-1,0) {18};
			\node[] at (-1,1) {37};
			\node[] at (-1,2) {53};
			\node[] at (-1,3) {74};
		\end{tikzpicture}
		\label{fig:RSKc}
	}
	\hfill
	
	\caption{\protect\subref{fig:RSKa}~The original tableau $T$.
		\protect\subref{fig:RSKb}~The highlighted boxes indicate the bumping
		route for the Schensted insertion $T \leftarrow 18$. The numbers next to
		the arrows represent the bumped entries.
		\protect\subref{fig:RSKc}~The resulting tableau after the Schensted
		insertion $T \leftarrow 18$.}

	\label{fig:RSK}
\end{figure}

The \emph{Schensted row insertion} is an algorithm that takes a tableau $T$ and
a number $z$ as input. The process begins by inserting $z$ into the first (bottom)
row of $T$, following these rules:
\begin{itemize}
	\item
	$z$ is placed in the leftmost box containing an entry strictly larger than $z$.
	
	\item
	If no such box exists, $z$ is appended to the end of the row in a new box, 
	and the algorithm terminates.
	
	\item
	If $z$ displaces an existing entry $z'$, this $z'$ is \emph{``bumped''} to the second row.
	
	\item
	The process repeats with $z'$ being inserted into the second row, following the same rules.
	
	\item
	This continues until a number is inserted into an empty box.
\end{itemize}
The resulting tableau is denoted as $T\leftarrow z$, see
\cref{fig:RSKb,fig:RSKc} for an example. The sequence of boxes whose contents
change during this process is called \emph{the bumping route}.

Schensted insertion is a key component of the Robinson--Schensted--Knuth
algorithm (RSK), see below.

\subsubsection{The Robinson--Schensted--Knuth algorithm}
\label{sec:RSK-def}

This article considers a simplified version of the Robinson--Schensted--Knuth
algorithm (RSK), which is more accurately described as the Robinson--Schensted
algorithm. However, we retain the RSK acronym due to its widespread recognition.
The RSK algorithm maps a finite sequence $w=(w_1,\dots,w_{\nklatki})$ to a pair
of tableaux: the insertion tableau $P(w)$ and the recording tableau $Q(w)$.

The insertion tableau is defined as:
\begin{equation}
	\label{eq:insertion}
	P(w) = \Big( \big( (\emptyset \leftarrow w_1) \leftarrow
	w_2 \big) \leftarrow \cdots \Big) \leftarrow w_{\nklatki}.
\end{equation}
This represents the result of iteratively applying Schensted insertion to the
entries of $w$, beginning with an empty tableau $\emptyset$.

The recording tableau $Q(w)$ is a standard Young tableau with the same shape as
$P(w)$. Each entry in $Q(w)$ corresponds to the iteration number in
\eqref{eq:insertion} when that box was first filled. In other words, $Q(w)$
records the order in which the entries of the insertion tableau were populated.
Both $P(w)$ and $Q(w)$ share a common shape, denoted as $\RSK(w)$, which we
refer to as \emph{the RSK shape associated with $w$}.

\smallskip

The RSK algorithm is a
fundamental tool in algebraic combinatorics and representation theory,
particularly in relation to Littlewood--Richardson coefficients (see
\cite{Fulton1997,Stanley1999}).

\subsection{Context and motivations: RSK applied to random input}

A fruitful area of research is investigating the Robinson--Schensted--Knuth
algorithm applied to random input. We shall review some selected highlights of
this field.

\subsubsection{Plancherel measure}
\label{sec:plancherel}

The simplest example involves applying RSK to a finite sequence 
$w=(w_1,\dots,w_{\nklatki})$ of
\iid random variables uniformly distributed on the unit interval~$[0,1]$.
The resulting probability distribution of $\RSK(w)$ is the celebrated
\emph{Plancherel measure $\operatorname{Pl}_{\nklatki}$} on Young diagrams with
$\nklatki$ boxes. This measure arises naturally in the context of decomposing
the left regular representation of the symmetric group $\Sy{\nklatki}$ into
irreducible components. For a Young diagram $\lambda$ with
$\nklatki$ boxes, the Plancherel measure is defined as:
\[ \operatorname{Pl}_{\nklatki}(\lambda) = \frac{(f^\lambda)^2}{\nklatki !} \]
where $f^\lambda$ denotes the number of standard Young tableaux of shape
$\lambda$, see \cite[Section~1.8]{Romik2015}

A remarkable result related to the probability distribution of $\RSK(w)$ is the
solution to \emph{the Ulam--Hammersley problem}. This solution
\cite{BaikDeift1999,Okounkov2000} reveals a surprising connection with \emph{the
	Tracy--Widom distribution} \cite{TracyWidom}, which originates in random matrix
theory. The Tracy--Widom distribution describes the asymptotic behavior of the
largest eigenvalue in certain random matrix ensembles and has found applications
in various fields, including growth models.

For a comprehensive introduction to these topics, we recommend the book by Romik
\cite{Romik2015}.

\subsubsection{Extremal characters of $\Sy{\infty}$}
\label{sec:extremal}

Consider an infinite sequence of \iid random variables $w_1, w_2, \dots$ (possibly with a
more complex probability distribution including atoms). Let
\begin{equation}
	\label{eq:plancherel-growth-via-RSK}
	\lambda^{(\nklatki)} = \RSK(w_1, \dots, w_{\nklatki})
\end{equation}
denote the RSK shape corresponding to the first $\nklatki$ variables. The
resulting sequence of Young diagrams
\begin{equation}
	\label{eq:walk-young}
	\emptyset = \lambda^{(0)} \nearrow \lambda^{(1)} \nearrow \cdots 
\end{equation}
forms a Markov random walk on the Young graph, where vertices are Young diagrams
and edges connect diagrams differing by one box. This random walk
\eqref{eq:walk-young} has some additional convenient properties, which are out
of scope of the current paper.

Vershik and Kerov \cite{VershikKerov1981a,KerovVershik1986} demonstrated that
classifying such random walks on the Young graph with these additional
properties (a problem intersecting probability theory, harmonic analysis on the
Young graph, and ergodic theory) is equivalent to identifying \emph{the extremal
characters of the infinite symmetric group $\Sy{\infty}$}. This approach led to a
novel, conceptual proof of Thoma's classification of such characters
\cite{Thoma1964}. As a consequence, there exists a bijection between the
extremal characters of $\Sy{\infty}$ and the probability laws of the random
variables $(w_{\nklatki})$. The RSK algorithm provides an explicit method for
generating the corresponding random walk \eqref{eq:walk-young}.

\subsubsection{The key motivation: RSK as an isomorphism of dynamical systems}

Consider the recording tableau for an infinite sequence $w_1, w_2, \dots$: \[
Q_\infty(w_1,w_2,\dots) := \lim_{n\to\infty} Q(w_1,\dots,w_n). \] This infinite
standard tableau fills a subset of the upper-right quarterplane, where each
natural number appears exactly once, with rows and columns increasing. Each
entry in $Q_\infty(w_1,w_2,\dots)$ represents the iteration at which the
corresponding box became non-empty during the infinite sequence of row
insertions: \[  \big( (\emptyset \leftarrow w_1) \leftarrow w_2 \big) \leftarrow
\cdots. \] 
Thus, $Q_\infty$ encodes the infinite path \eqref{eq:walk-young} in
the Young graph.

The connection between RSK and the Plancherel measure can be formulated as
follows: $Q_\infty$ is a homomorphism between two probability spaces:
\begin{itemize}
\item  the product space $[0,1]^\infty$ with the product Lebesgue measure
(representing a sequence of \iid uniform random variables on $[0,1]$), and

\item  the set of infinite standard Young tableaux with the Plancherel measure
(fundamental in the harmonic analysis of $\Sy{\infty}$).
\end{itemize}
Each space admits a natural measure-preserving transformation: the one-sided
shift and \emph{the jeu de taquin transformation}, respectively. This leads to our
central question:
\begin{problem}[Key motivation] \label{problem:iso} Is $Q_\infty$ an isomorphism
	of measure-preserving dynamical systems? If so, how can we construct its
	inverse? 
\end{problem}

The answer is non-trivial because finite RSK requires both the recording tableau
$Q(w)$ and the insertion tableau $P(w)$ to recover $w$, but $P(w)$ is
unavailable in the infinite case. An affirmative answer would illuminate aspects
of harmonic analysis on $\Sy{\infty}$, such as the ergodicity of jeu de taquin.

Romik and \sniady \cite{RomikSniady2015} investigated this problem, and we
present their findings below. Spoiler alert: \cref{problem:iso} has a positive
resolution.

\subsection{The main problem: position of the new box}
\label{sec:definition-of-uIns}

\subsubsection{Notations} 

For a finite sequence $w=(w_1,\dots,w_{\nklatki+1})$ we denote by
\[ \Ins(w_1,\dots,w_{\nklatki};\ w_{\nklatki+1})=(x_{\nklatki},y_{\nklatki})\] 
the coordinates of the last box which was inserted to the Young diagram by the
RSK algorithm applied to the sequence $w$. In other words, it is the box
containing the biggest number in the recording tableau $Q(w)$. Above,
$(x_{\nklatki},y_{\nklatki})$ refer to the Cartesian coordinates of this box in
the French convention, i.e., $x_{\nklatki}$ is the number of the column and
$y_{\nklatki}$ is the number of the row. By
\[ \uIns (w_1,\dots,w_{\nklatki};\ w_{\nklatki+1}) =
x_{\nklatki}- y_{\nklatki} \]
we denote the $u$-coordinate of the aforementioned box.

For later use, given a tableau $T$ and a real number $z$ we denote by $\Ins(T;
z)$ the coordinates of the new box which was created by the Schensted row
insertion $T\leftarrow z$; in other words it is the unique box of the skew
diagram
\[ \operatorname{shape} \big( T \leftarrow z \big) / \operatorname{shape} T. \]
The quantity $\uIns(T; z)$ is defined in an analogous way as the $u$-coordinate
of $\Ins(T; z)$.

\subsubsection{The main problem: relationship between the value of the new entry
	and the position of the new box}

In this paper, we focus on a fundamental case where $w_1, w_2, \ldots$ is a
sequence of \iid random variables, each uniformly distributed on the unit
interval $[0,1]$. Romik and \sniady \cite{RomikSniady2015} observed that
constructing the inverse map to $Q_\infty$ (and consequently, providing a
positive answer to \cref{problem:iso}) requires addressing the following
question about the finite version of RSK applied to such a random input.

\begin{problem}[The main problem]
	\label{pro:what-is-relationship} 
	
	Let $w = (w_1, w_2, \ldots)$ be a sequence of \iid random variables, each
uniformly distributed on $[0,1]$. We seek to understand the relationship
between:
	\begin{itemize}
		\item The value of the new entry $w_{\nklatki+1}$, and 
		\item The position of the corresponding newly created box:
		\begin{equation}
			\label{eq:my-beloved-rainbow-box}
			(x_\nklatki, y_\nklatki) = \Ins(w_1, \ldots, w_\nklatki; w_{\nklatki+1}).
		\end{equation}
	\end{itemize}
	Our interest lies in the asymptotic behavior of this probabilistic relationship
	as $\nklatki \to \infty$.
\end{problem}

\begin{figure}
    \centering 
    \subfloat[]{
    \begin{tikzpicture}[scale=0.8,rotate=45]
        \foreach \x\y\c in {0 / 0 / 1,
            1 / 0 / 2,
            2 / 0 / 3,
            3 / 0 / 4,
            0 / 1 / 5,
            1 / 1 / 7,
            2 / 1 / 9,
            0 / 2 / 6,
            1 / 2 / 10,
            0 / 3 / 8,
            1 / 3 / 11
        }{
            \draw (\x,\y)  rectangle +(1,1)  +(0.5,0.5) node {$\c$}    ;    }
        \draw[blue,thick,->] (0,0) -- (0,4.7) node[anchor=south]{\textcolor{blue}{$y$}};
        \draw[blue,thick,->] (0,0) -- (4.7,0) node[anchor=south]{\textcolor{blue}{$x$}};
    \end{tikzpicture}
    \label{subfig:historyA}
}
    \hfill
      \subfloat[]{
    \begin{tikzpicture}[scale=0.8,rotate=45]
        \foreach \x\y\c\kolor in 
        {0 / 0 / 14 /1,
            1 / 0 / 59 /3,
            2 / 0 / 75 /3,
            3 / 0 / 91 /4,
            0 / 1 / 58 /3,
            1 / 1 / 48 /2,
            2 / 1 / 86 /4,
            0 / 2 / 41 /2,
            1 / 2 / 79 /4,
            0 / 3 / 7 /1,
            1 / 3 / 63 /3}{
            \fill[fill=PuBu-5-\kolor](\x,\y)  rectangle +(1,1)  +(0.5,0.5) node {$\c$}    ;    }
                \draw[blue,thick,->] (0,0) -- (0,4.7) node[anchor=south]{\textcolor{blue}{$y$}};
        \draw[blue,thick,->] (0,0) -- (4.7,0) node[anchor=south]{\textcolor{blue}{$x$}};
    \end{tikzpicture}
    \label{subfig:historyB}
} 

	\caption{ \protect\subref{subfig:historyA} Recording tableau $Q_{xy}=Q(w)$
	(depicted using Russian convention) corresponding to the sequence $w=(14, 59,
	75, 91, 58, 41, 48, 7, 86, 79, 63)$ selected from the interval $J=[0,100]$.
		\protect\subref{subfig:historyB} Responsibility matrix
	$\left(w_{Q_{xy}}\right)$ derived by substituting each entry of the recording
	tableau with its corresponding value from sequence $w$. The background employs
	a four-color layer tinting scheme, representing different quarters of the
	interval $J$: $[0,25]$ (near-white), $(25,50]$ (beige), $(50,75]$ (blue), and
	$(75,100]$ (dark blue). } 
	
	\label{fig:history}
\end{figure}
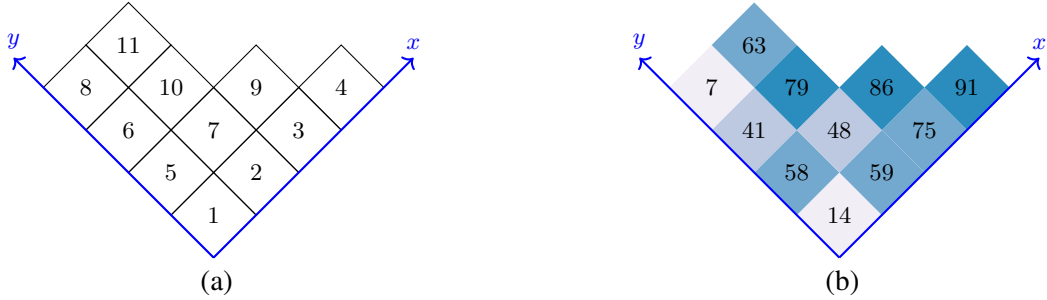

\begin{figure}
\subfile{figures/FIGURE-history-big.tex} \caption{ Analogue of
	\cref{subfig:historyB} for a sequence $w=(w_1,\dots,w_{1000})$ comprising
	$\nklatki=1000$ \iid random variables following
	the uniform distribution $U(0,1)$ on the unit interval $I=[0,1]$. The layer
	tinting represents the values of the responsibility matrix $\left( w_{Q_{xy}}
	\right)$, with four color-coded intervals: $\big[0,\nicefrac{1}{4}\big]$
	(near-white), $\big(\nicefrac{1}{4},\nicefrac{1}{2}\big]$ (beige),
	$\big(\nicefrac{1}{2},\nicefrac{3}{4}\big]$ (blue), and
	$\big(\nicefrac{3}{4},1\big]$ (dark blue). The red solid line depicts the
	Logan--Shepp--Vershik--Kerov limit curve $\Omega_*$. Five black dots indicate
	its natural parametrization, dividing the area between the curve and the $Oxy$
	axes into four curvilinear triangles of equal areas. } 
	\label{fig:history-big}
\end{figure}

The RSK algorithm's dynamics can be visualized by replacing each entry $(x,y)$
in the recording tableau $Q(w) = (Q_{xy})$ with the corresponding element from
the input sequence $w = (w_1, \ldots, w_n)$ that led to its creation. The
resulting matrix $(w_{Q_{xy}})$ is termed the \emph{responsibility matrix} (see
\cref{subfig:historyB}). For improved readability in \cref{fig:history}, we
sample integers from $[0,100]$ instead of using real numbers.

Following Pittel and Romik's approach \cite[Section 1.1]{PittelRomik2007}, we
can represent the responsibility matrix geometrically as a three-dimensional
array of cuboids above the plane $\R^2 \times \{0\}$. Each cuboid's
height corresponds to $w_{Q_{xy}}$, with a unit square $[x-1,x] \times [y-1,y]
\times \{0\}$ as its base. Alternatively, we can interpret the function $(x,y)
\mapsto w_{Q_{xy}}$ as the graph of a (discontinuous) surface forming the upper
envelope of this array.

By rescaling the base unit squares to side length $\frac{1}{\sqrt{n}}$ (where
$n$ is the length of sequence~$w$), we normalize the total base area to 1 (see
\cref{fig:history-big}). In both \cref{subfig:historyB} and
\cref{fig:history-big}, we use \emph{layer tinting} to indicate elevation, i.e.,
the values of the responsibility matrix.

Monte Carlo simulations, such as the one depicted in \cref{fig:history-big},
suggest that the value of the new entry $w_{n+1}$ in
\cref{pro:what-is-relationship} approximately determines the ray (a half-line
originating from the coordinate system's origin) on which the new box
$\Ins(w_1,\ldots,w_n;\ w_{n+1})$ will appear. This behavior is observed with
high probability as $n \to \infty$. Smaller values of the new entry ($w_{n+1}
\approx 0$) tend to correspond to rays closer to the $Oy$ axis (the nearly white
area in \cref{fig:history-big}), while larger values ($w_{n+1} \approx 1$)
typically align with rays nearer to the $Ox$ axis (the dark blue area in
\cref{fig:history-big}).

\subsection{The limit shape and its parametrization} 

To address \cref{pro:what-is-relationship}, we begin with a crucial observation:
the newly created box $\Ins(w_1,\dots,w_{\nklatki};\ w_{\nklatki+1})$ must be
situated in one of the concave corners of the Young diagram
$\lambda^{(\nklatki)}=\RSK(w_1,\dots,w_{\nklatki})$. Consequently, understanding
the asymptotic behavior of the random Young diagram $\lambda^{(\nklatki)}$
becomes paramount for our analysis. Fortunately, the probability distribution of
such an RSK shape is well-understood: it follows the Plancherel measure on Young
diagrams with $\nklatki$ boxes. The limit shape for this measure has been
extensively studied and characterized. We will review this limit shape in the
following discussion. For a pedagogical introduction to this topic we refer to
\cite[Chapter~1]{Romik2015}.

\subsubsection{Scaling of Young diagrams}
\label{sec:scaling}

The boundary of a Young diagram $\lambda$ is called its \emph{profile}, see
\cref{subfig:french}. In the Russian coordinate system the profile can be seen
as the plot of the function $\omega_\lambda\colon \R\to\R_+$, see
\cref{subfig:russian}.

If $c>0$ is a positive number, the output $c \lambda\subset \R_+^2$ of a
homogeneous dilation with scale $c$ applied to the Young diagram $\lambda$,
viewed as a subset of the quarterplane,  might no longer be a Young diagram.
Nevertheless, its profile is still well defined as:
\begin{equation}
    \label{eq:rescaled-profile}
 \omega_{c\lambda} (u) = c\ \omega_{\lambda}\left( \frac{u}{c} \right) \qquad \text{ for $u\in\R$}.
\end{equation}

\subsection{The Logan--Shepp--Vershik--Kerov limit shape}
\label{sec:lsvk}

Logan and Shepp \cite{LoganShepp1977} as well as Vershik and Kerov
\cite{VershikKerov1977} independently proved a law of large numbers for the
shapes of Young diagrams: as the number of boxes $n \to \infty$ tends to infinity, the scaled-down
profile of a Plancherel-distributed random Young diagram converges in
probability to an explicit limit curve (see \cref{fig:history-big} for an
illustration). The limit shape is given by the function $\Omega_*: \R \to [0,\infty)$ defined as:
\begin{equation} 
	\label{eq:omegastar}
	\Omega_*(u) = 
	\begin{cases} 
		\frac{2}{\pi}\left[ u \arcsin\left(\frac{u}{2}\right) + \sqrt{4-u^2} \right] 
		        & \text{if } -2 \leq u \leq 2, \\
		|u| & \text{otherwise.}
	\end{cases}
\end{equation}

\begin{theorem}[Limit shape of Plancherel-distributed Young diagrams \cite{LoganShepp1977,VershikKerov1977}] 
	\label{thm:plancherel-limitshape}
	
	Let $\lambda^{(n)}$ be a random Young diagram distributed according to the
Plancherel measure on Young diagrams with $n$ boxes. Then we have convergence
in the supremum norm in probability:
	\[
	\sup_{u \in \R}
	      \left|\omega_{\frac{1}{\sqrt{n}}\lambda^{(n)}}(u) - \Omega_*(u) \right| 
	                                          \xrightarrow[n \to \infty]{\Pro} 0.
	\]
\end{theorem}
The rate of convergence is quite fast, and precise results about the magnitude
of local fluctuations have been established \cite{BogachevSu2007}.

\subsection{Position of the new box for an unspecified new entry}
\label{sec:semicircle}

In the context of \cref{pro:what-is-relationship}, let us momentarily
assume that we do not know the value of the new entry $w_{\nklatki+1}$. The
complete answer to \cref{pro:what-is-relationship} would then be the
probability distribution of the new box~\eqref{eq:my-beloved-rainbow-box}. This
distribution coincides with that of the box containing the entry $\nklatki+1$ in
a large Plancherel-distributed random standard Young tableau. Pittel and
Romik \cite[Section~1.2]{PittelRomik2007} studied an analogous problem for a
different class of random tableaux.

As expected, the probability distribution of this vector (after rescaling, and
in Russian coordinates)
\[
(u_{\nklatki}, v_{\nklatki}) = 
   \left(\frac{x_{\nklatki} - y_{\nklatki}}{\sqrt{\nklatki}}, 
          \frac{x_{\nklatki} + y_{\nklatki}}{\sqrt{\nklatki}}\right)
\]
converges, as $\nklatki \to \infty$, to a certain probability measure $\mu$
supported on the limit curve $\Omega_*$. To uniquely specify this measure, it
suffices to find the limit distribution for the $u$-coordinates, i.e., for the
random variables $(u_{\nklatki})$.

It turns out that the sequence $(u_{\nklatki})$ converges in distribution to the
\emph{semicircle measure} on the interval $[-2,2]$ (see \cite{Kerov1993-transition} and
\cite[Theorem 3.2]{RomikSniady2015}). The density of this measure is given by
\begin{equation}
	\label{eq:SCdensity}
	f_{\SC}(u) = \frac{1}{2\pi} \sqrt{4 - u^2} \quad \text{for } u \in [-2,2].
\end{equation}
We denote by $F_{\SC}: [-2,2] \to [0,1]$ the cumulative distribution function (\CDF) of
this semicircle law, given by
\[
F_{\SC}(u) = \frac{1}{2\pi} \int_{-2}^u \sqrt{4 - w^2} \, \dif w =
\frac{1}{2} + \frac{u \sqrt{4 - u^2}}{4 \pi} + \frac{\arcsin\left(\frac{u}{2}\right)}{\pi}
\quad \text{for } u \in [-2,2].
\]

\subsection{The natural parametrization of the limit curve}

The following two \emph{RSK-tri\-go\-no\-met\-ric functions} are defined for $z \in [0,1]$:
\begin{align*}
	\operatorname{RSK cos} z &= F^{-1}_{\SC}(z), \\
	\operatorname{RSK sin} z &= \Omega_*\left(F^{-1}_{\SC}(z)\right).
\end{align*}

The map
\begin{equation}
	\label{eq:parametrization}
	[0,1] \ni z \mapsto \left(\operatorname{RSK cos} z, \operatorname{RSK sin} z \right) \in \R^2
\end{equation}
is a two-dimensional analogue of the \emph{quantile function} (the inverse of
the \CDF) in the context of the limit measure $\mu$.
This map provides a convenient parametrization of the limit curve $\Omega_*$, as
illustrated by the five black dots in \cref{fig:history-big}. It is a
natural analogue of the parametrization of the unit circle by the angle
\[ z \mapsto (\cos z, \sin z) \]
as well as the parametrization of the hyperbola by the hyperbolic angle
\[ z \mapsto (\cosh z, \sinh z), \]
since in all three cases the area of the curvilinear triangle between the ray,
the curve, and the $y$-axis (or $x$-axis) is proportional to the value of the
parameter $z$.

\subsection{The first main conjecture: the fluctuations of Schensted row insertion}

To address \cref{pro:what-is-relationship}, it is most convenient to condition
on the random variable $w_{n+1}$ taking a fixed value $z \in [0,1]$. The
following result by Romik and \sniady provides a first-order asymptotic
description.

\begin{theorem}[Asymptotic determinism of RSK insertion {\cite[Theorem 5.1]{RomikSniady2015}}]
	\label{thm:Romik-Sniady-determinism}
	Let $z \in [0,1]$ be fixed. We denote by
	\begin{equation}
		\label{eq:point-on-limit-curve}
		(u_0, v_0) = (\operatorname{RSK cos} z, \operatorname{RSK sin} z)
	\end{equation}
	the corresponding point on the limit curve $\Omega_*$ via \eqref{eq:parametrization}.
	
	Let $w_1, w_2, \ldots$ be a sequence of \iid random variables with the uniform
distribution $U(0,1)$ on the unit interval. We denote by
	\[ (x_n, y_n) = \Ins(w_1, \ldots, w_n; z) \]
	the position of the box created by the insertion of $z$.
	
	Then we have the following convergence in probability:
	\begin{equation}
		\label{eq:ins-rus}
		\left(\frac{x_n - y_n}{\sqrt{n}}, \frac{x_n + y_n}{\sqrt{n}}\right) 
		     \xrightarrow[n \to \infty]{P} (u_0, v_0).
	\end{equation}
\end{theorem}

This relationship between the value of the new entry being inserted and the
location of the corresponding new box was visualized in \cref{fig:history-big}
by the layer tinting. \cref{thm:Romik-Sniady-determinism} was illustrated by a
Monte Carlo simulation in \cref{fig:new-box-100}. One of the results of
the current paper is a generalization of this result in
\cref{thm:determinism-new}.

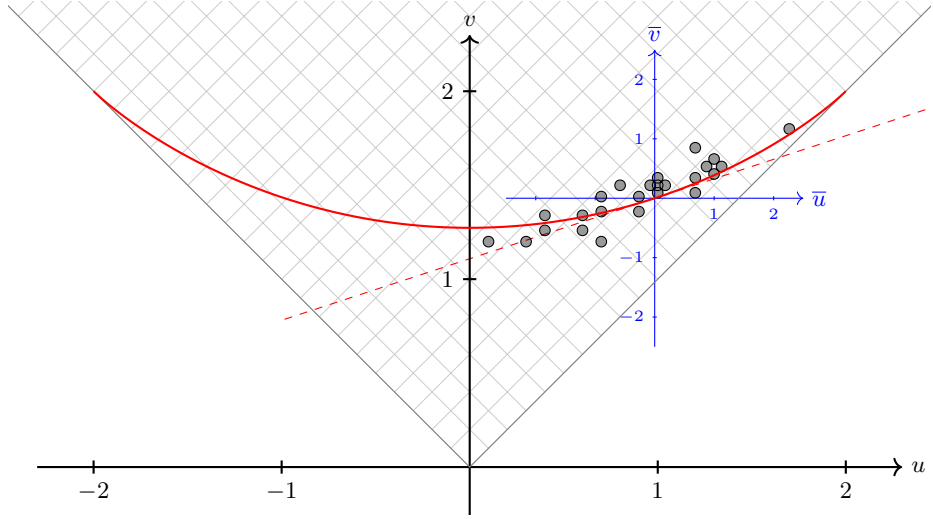
\begin{figure}
	
	\centering
	\begin{tikzpicture}[scale=2.5]
		\clip(-2.45,-0.25) rectangle (2.45,2.48);

		\begin{scope}[rotate=45,scale=sqrt(2/100)]
			\draw[ultra thin, black!20] (0,0) grid (71,71);
		\end{scope}
		
		\draw[red,dashed] (0.9837236655274301, 1.43061427786138)  +(2, 0.654733957003398) -- +(-2, -0.654733957003398);

		\begin{scope}[shift={(0.9837236655274301, 1.43061427786138)},scale=1/sqrt(sqrt(100))]
			\draw[blue,->] (-2.5,0) -- (2.5,0) node[anchor=west] {$\overline{u}$};
			\draw[blue,->] (0,-2.5) -- (0,2.5) node[anchor=south] {$\overline{v}$};
			\foreach \x in {1,2}  \draw[blue] (\x,1pt) -- (\x,-1pt) node[anchor=north] {\tiny$\x$};
			\foreach \x in {-1,-2}  \draw[blue] (\x,1pt) -- (\x,-1pt);
			\foreach \y in {-2,-1,1,2} %
			\draw[blue] (1pt,\y) -- (-1pt,\y) node[anchor=east] {\tiny$\y$}; 
			
		\end{scope}
		
		\begin{scope}[rotate=45,scale=sqrt(2/100)]

			\foreach \position in { (9, 2), (17, 0), (11, 3), (7, 4), (14, 2), (6, 5) } 
			\draw[fill=black,fill opacity=0.4] \position +(0.5,0.5) circle (0.2);
			\foreach \position in {  (13, 1), (9, 3), (10, 3), (11, 2), (8, 4) } 
			{ \draw[fill=black,fill opacity=0.4] \position +(0.3,0.3) circle (0.2) +(0.7,0.7) circle (0.2); }
			\foreach \position in {  (14, 1) } 
			{ \draw[fill=black,fill opacity=0.4] \position +(0.3,0.3) circle (0.2) 
				+(0.3,0.7) circle (0.2) 
				+(0.7,0.3) circle (0.2) 
				+(0.7,0.7) circle (0.2); }
			\foreach \position in {  (12, 2) } 
			{ \draw[fill=black,fill opacity=0.4] \position +(0.3,0.3) circle (0.2) 
				+(0.3,0.7) circle (0.2) 
				+(0.7,0.3) circle (0.2) 
				+(0.7,0.7) circle (0.2)
				+(0.5,0.5) circle (0.2);
			}
		\end{scope}
		
		\draw[thick,->] (-2.3,0)-- (2.3,0) node[anchor=west] {$u$};
		\draw[thick,->] (0,-0.5) -- (0, 2.3) node[anchor=south] {$v$};
		
		\draw[black!50] (-2.5,2.5) -- (0,0) -- (2.5,2.5);
		\foreach \x in {-2,-1,1,2} %
		\draw[thick] (\x,1pt) -- (\x,-1pt) node[anchor=north] {$\x$};
		\foreach \y in {1,2} %
		\draw[thick] (1pt,\y) -- (-1pt,\y) node[anchor=east] {$\y$}; 
		
		\draw[red,thick] plot[smooth] file {figures/data/vershik-kerov-RU.txt};
		
	\end{tikzpicture} 

\caption{ Grey circles represent simulated (rescaled) positions of the new box
	$\Ins(w_1,\dots,w_\nklatki; z)$ for an initial sequence $w_1,\dots,w_\nklatki$
	of \iid $U(0,1)$ random variables with length $\nklatki=100$ and new entry
	$z=0.8$. The $(u,v)$ coordinates of the circles correspond to the quantities in
	the law of large numbers (\cref{thm:Romik-Sniady-determinism}). The grid
	indicates the actual size of the boxes. The solid red curve depicts the
	Logan--Shepp--Vershik--Kerov limit shape, with the red dashed line tangent to
	this curve at the point corresponding to $z$ in the natural parametrization. The
	coordinates of the circles in the blue $(\overline{u}, \overline{v})$ coordinate
	system correspond to the quantities in \cref{conj:CLT-Plancherel}.}

\label{fig:new-box-100}
\end{figure} 

\medskip

We conjecture that the ultimate answer to \cref{pro:what-is-relationship} is
provided by the following description of the fluctuations of the new box.
\begin{conjecture}[The first main conjecture]
\label{conj:CLT-Plancherel}
We keep the notation from \cref{thm:Romik-Sniady-determinism}.

Then the scaled sequence of random points
\begin{multline} 
\label{eq:my-pony-rainbow-vector}
\sqrt[4]{\nklatki}  \left[  \left( \frac{x_{\nklatki} -y_{\nklatki}}{\sqrt{\nklatki}}, 
        \frac{x_{\nklatki}+y_{\nklatki} }{\sqrt{\nklatki}} \right) - (u_0,v_0)  \right]   =  \\
\left( 
\frac{x_{\nklatki}-y_{\nklatki}}{\sqrt[4]{\nklatki}}- \sqrt[4]{\nklatki}\ u_0, \;\;
\frac{x_{\nklatki}+y_{\nklatki}}{\sqrt[4]{\nklatki}}- \sqrt[4]{\nklatki}\ v_0
\right) \in\R^2
\end{multline}
converges in distribution to a centered, degenerate Gaussian distribution on the
plane which is supported on the line
\begin{equation}\label{eq:degenerate-line}
\big\{ (u,v):  v = \Omega_*'(u_0)\ u \big\} 
\end{equation}
that is parallel to the tangent line to the limit curve $\Omega_*$ in $u_0$.

This Gaussian distribution is uniquely determined by the limit measure for the
$u$-coordinates
\[ 
\sqrt[4]{\nklatki}  \left[   \frac{x_{\nklatki} -y_{\nklatki}}{\sqrt{\nklatki}} - u_0  \right]  
\xrightarrow[\nklatki\to\infty]{\mathcal{D}} N\left( 0, \sigma^2_{u_0} \right).
\]
which is the centered Gaussian distribution with the variance
\begin{equation}
\label{eq:varianceCLT}
\sigma^2_{u_0}= \frac{\pi}{3} \sqrt{4- u_0^2 }.
\end{equation}
\end{conjecture}

\begin{figure}
	
	\centering
	\begin{tikzpicture}[scale=15]
		\clip(0.65,1.3) rectangle (1.5,1.72);

		\draw[red,dashed] (0.9837236655274301, 1.43061427786138)  +(2, 0.654733957003398) -- +(-2, -0.654733957003398);

		\begin{scope}[shift={(0.9837236655274301, 1.43061427786138)},scale=1/sqrt(sqrt(10000))]
			\draw[blue,->] (-3.5,0) -- (4.5,0)  node[anchor=west] {$\overline{u}$};
			\draw[blue,->] (0,-2.5) -- (0,2.5)  node[anchor=south] {$\overline{v}$};
			\foreach \x in {1,2,3,4}  \draw[blue] (\x,1pt) -- (\x,-1pt) node[anchor=north] {$\x$};
			\foreach \x in {-1,-2,-3}  \draw[blue] (\x,1pt) -- (\x,-1pt) node[anchor=south] {$\x$};
			\foreach \y in {-2,-1,1,2} %
			\draw[blue] (1pt,\y) -- (-1pt,\y) node[anchor=east] {$\y$};            
		\end{scope}

		\begin{scope}[rotate=45,scale=sqrt(2/10000)]

			\foreach \position in { (116, 23), (110, 26), (137, 14), (130, 18), (128, 18), (110, 27), (115, 25), (109, 29), (101, 33), (121, 22), (130, 17), (122, 21), (147, 9), (137, 15), (126, 20), (117, 23), (116, 24), (114, 26), (124, 20), (112, 26), (123, 21), (108, 29), (134, 16) } 
			\draw[fill=black,fill opacity=0.4] \position +(0.5,0.5) circle (0.4);
			\foreach \position in {  (129, 19) } 
			{ \draw[fill=black,fill opacity=0.4] \position +(0.3,0.3) circle (0.4) +(0.7,0.7) circle (0.4); }
		\end{scope}

		\draw[red,thick] plot[smooth] file {figures/data/vershik-kerov-RU.txt};
		
	\end{tikzpicture}  

\caption{Analogue of \cref{fig:new-box-100} for sequence length $\nklatki=10^4$
	and $z=0.8$. The image is zoomed to focus on the $(\overline{u}, \overline{v})$
	coordinate system. For improved visibility, the grid showing the actual size of
	the boxes has been omitted.}

\label{fig:clt-big}
\end{figure}
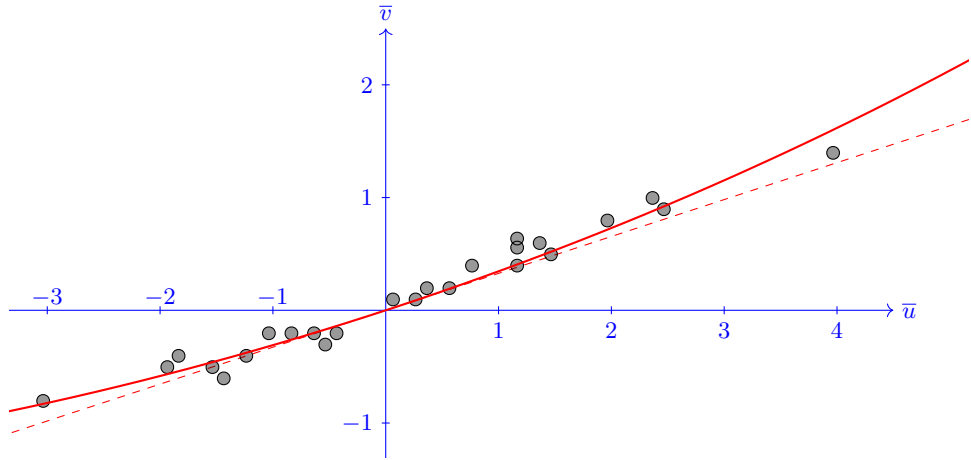

This conjecture is visualized by Monte Carlo simulations in
\cref{fig:new-box-100,fig:clt-big}. The blue coordinate
system~$(\overline{u},\overline{v})$
\[
(\overline{u},\overline{v})=
\sqrt[4]{\nklatki}  \left[  \left( \frac{u}{\sqrt{\nklatki}}, 
          \frac{v}{\sqrt{\nklatki}} \right) - (u_0,v_0)  \right]   \]
corresponds to the quantities which appear in the $\nklatki$-th random point
\eqref{eq:my-pony-rainbow-vector}.

\medskip

The formula for the variance \eqref{eq:varianceCLT} may raise questions about
its conceptual interpretation. This motivates further reading, as
\cref{conj:CLT-Plancherel} appears to be a special case of the more general
\cref{thm:CLT-shape}, though establishing a rigorous reduction would require
additional arguments.

\begin{remark}
	It would be valuable to compare \cref{conj:CLT-Plancherel} with the Monte Carlo
experiments conducted by Vassiliev, Duzhin, and Kuzhin
\cite{VassilievDuzhin2019}. However, it is important to note that some of their
results appear to be based on corrupted computer data. For instance, the plots
in \cite[Figure~8]{VassilievDuzhin2019} lack the expected axial symmetry with
respect to the $W=0$ axis and the central symmetry around $W=0$ and
$\mu=\frac{1}{2}$. These symmetries should be present due to the inherent
properties of the RSK correspondence. Specifically, replacing the numbers $z,
w_1, w_2, \ldots$ with $1-z, 1-w_1, 1-w_2, \ldots$ (using the notation from
\cref{conj:CLT-Plancherel}) should result in the transposition of the
corresponding RSK diagram's shape. This property is a well-established feature
of the RSK algorithm and should be reflected in any accurate simulation
results.
\end{remark}

\subsection{Donsker's functional central limit theorem for jeu de taquin trajectories}

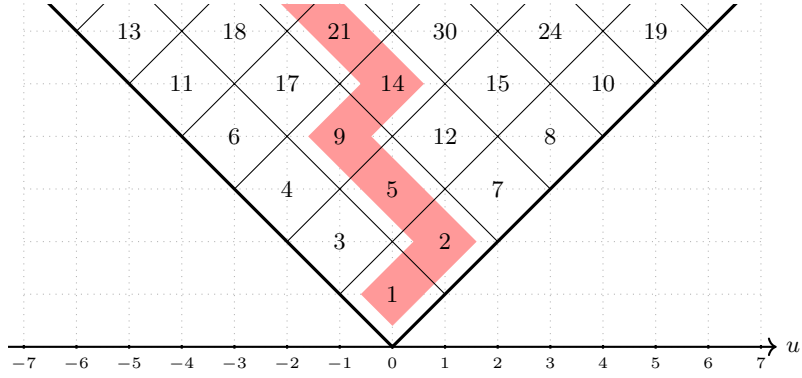
\begin{figure}[t]
    \centering
    \centering    
\begin{tikzpicture}[scale=0.7]
    \draw[black!30,dotted] (-7,0) grid (7,6.6); 
    \draw[thick,->] (-7.3,0)-- (7.3,0) node[anchor=west] {$u$};
    \foreach \x in {-7,-6,-5,-4,-3,-2,-1,0,1,2,3,4,5,6,7} %
    \draw[thick] (\x,1pt) -- (\x,-1pt) node[anchor=north] {\tiny $\x$};

    \clip (7,6.5) -- (-7,6.5) -- (-7,-0.1) -- (7,-0.1) ; 
    
    \draw[draw=none,fill=red!40]
(-9.4,19.0) -- (-8.4,18.0) -- (-7.4,17.0) -- (-8.4,16.0) -- (-7.4,15.0) --
(-8.4,14.0) -- (-7.4,13.0) -- (-6.4,12.0) -- (-5.4,11.0) -- (-4.4,10.0) --
(-3.4,9.0) -- (-2.4,8.0) -- (-1.4,7.0) -- (-0.4,6.0) -- (0.6,5.0) --
(-0.4,4.0) -- (0.6,3.0) -- (1.6,2.0) -- (0.6,1.0) -- (0.0,0.4) --
(-0.6,1.0) -- (0.4,2.0) -- (-0.6,3.0) -- (-1.6,4.0) -- (-0.6,5.0) --
(-1.6,6.0) -- (-2.6,7.0) -- (-3.6,8.0) -- (-4.6,9.0) -- (-5.6,10.0) --
(-6.6,11.0) -- (-7.6,12.0) -- (-8.6,13.0) -- (-9.6,14.0) -- (-8.6,15.0) --
(-9.6,16.0) -- (-8.6,17.0) -- (-9.6,18.0) -- (-10.6,19.0) -- (-10.0,19.6)
-- cycle;
    
 \begin{scope}[rotate=45,scale=sqrt(2)]
        \draw (0,0) grid (10,10);
 \end{scope}   

    \draw[very thick](12,12) -- (0,0) -- (-12,12);

\draw (0.0,1.0) node {$1$};
\draw (1.0,2.0) node {$2$};
\draw (2.0,3.0) node {$7$};
\draw (3.0,4.0) node {$8$};
\draw (4.0,5.0) node {$10$};
\draw (5.0,6.0) node {$19$};
\draw (6.0,7.0) node {$22$};
\draw (-1.0,2.0) node {$3$};
\draw (0.0,3.0) node {$5$};
\draw (2.0,5.0) node {$15$};
\draw (1.0,4.0) node {$12$};
\draw (3.0,6.0) node {$24$};
\draw (-2.0,3.0) node {$4$};
\draw (-1.0,4.0) node {$9$};
\draw (0.0,5.0) node {$14$};
\draw (1.0,6.0) node {$30$};
\draw (2.0,7.0) node {$34$};
\draw (-3.0,4.0) node {$6$};
\draw (-2.0,5.0) node {$17$};
\draw (-1.0,6.0) node {$21$};
\draw (0.0,7.0) node {$42$};
\draw (-4.0,5.0) node {$11$};
\draw (-3.0,6.0) node {$18$};
\draw (-2.0,7.0) node {$29$};
\draw (-5.0,6.0) node {$13$};
\draw (-4.0,7.0) node {$20$};
\draw (-6.0,7.0) node {$16$};

\end{tikzpicture}
    
    \caption{A part of an infinite standard Young tableau drawn in the Russian
	coordinate system. The highlighted boxes form the beginning of the jeu de
	taquin path.}
    
    \label{fig:jdt-path}
\end{figure}

Given an infinite standard Young tableau $T$, we view it in the Russian
coordinate system (see \cref{fig:jdt-path}). We define an infinite lattice
path, referred to as the \emph{jeu de taquin path} of tableau $T$, as follows.
Starting from the corner box of the tableau, we travel in unit steps northwest
or northeast. At each step, we choose the direction among the two in which the
entry in the tableau is smaller. This process continues indefinitely, creating
the jeu de taquin path. 
For an integer $n \geq 1$, we denote by $j_n = j_n(T) =
(u_n, v_n) \in \Z^2$ the last box in the jeu de taquin path of $T$ which
contains a number $\leq n$. We refer to the sequence $(j_n)$ as the \emph{jeu de
	taquin path in the lazy parametrization} (in the Russian coordinate system).
\cref{fig:jdt-path} illustrates these concepts. 

We conjecture that the following analogue of Donsker's functional central limit
theorem holds true for jeu de taquin paths. This claim is a refinement of a
result of Romik and \sniady \cite[Theorem~5.2]{RomikSniady2015}.

\begin{conjecture}[The second main conjecture]
	\label{conj:jdt} 
	
	Let $z \in [0,1]$ be fixed and let $(u_0,v_0)$ given by
	\eqref{eq:point-on-limit-curve} be the corresponding point on the limit curve.
	Let $w_1,w_2,\dots$ be a sequence of \iid random variables with the uniform
	distribution $U(0,1)$ on the unit interval. Let
	\[T = Q_\infty(1-z,w_1,w_2,\dots)\] 
	be the random infinite recording
	tableau and let $j_1,j_2,\dots$ be the jeu de taquin path in the
	tableau $T$ in the lazy parametrization.
	
	As $c \to \infty$, the random function
	\begin{equation}
		\label{eq:donsker}
	\R_+ \ni t \mapsto 
	\sqrt[4]{c}
	\left[ \frac{1}{\sqrt{c}} j_{\lceil c t^2 \rceil} - (t u_0, t v_0)  \right] \in \R^2 
	\end{equation}
	converges in distribution to the two-dimensional Brownian motion $(U_t,V_t)$
	that is supported on the line \eqref{eq:degenerate-line} and for which the
	covariance of the $u$-coordinates is given by
	\[ \E U_{t} U_{s} = \min(t,s) \sigma_{u_0}^2, \]
	where the constant $\sigma_{u_0}$ is given by \eqref{eq:varianceCLT}.
\end{conjecture}

A special case of \cref{conj:CLT-Plancherel} for $z=\frac{1}{2}$ was conjectured
by Wojtyniak \cite{Wojtyniak2019} (with a slightly different form of the
covariance) based on extensive Monte Carlo simulations.

\cref{conj:jdt} would imply \cref{conj:CLT-Plancherel}; indeed, one of the
consequences of \cref{conj:jdt} is the information about the limit distribution
of the random function \eqref{eq:donsker} evaluated at the fixed time $t=1$.
Fairly standard methods allow to relate this probability distribution to the one
from \cref{conj:CLT-Plancherel}, see \cite[Section~5.2]{RomikSniady2015}.

\begin{figure}[tb]
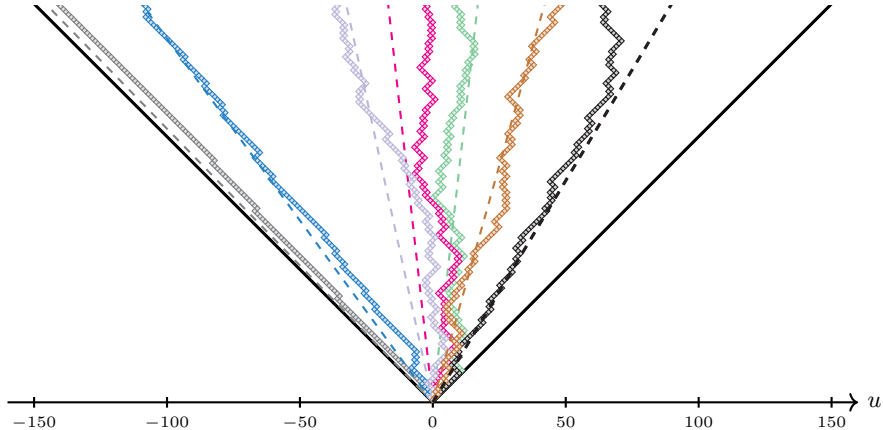

    \subfile{figures/FIGURE-simulated-trajectories.tex} 
    
    \caption{Several simulated paths of jeu de taquin and (dashed lines) their
    asymptotes. Figure excerpted from \cite{RomikSniady2015}.}
\label{fig:theta}
\end{figure}

\subsection{Second class particles}

With the notations of \cref{conj:jdt}, if $z$ is assumed to be random with the
uniform distribution $U(0,1)$, then the recording tableau $T$ is a
Plancherel-distributed infinite standard Young tableau. Thanks to Rost's map
\cite{Rost1981}, this can be seen as the \emph{Plancherel-TASEP interacting
	particle system} \cite[Section~7]{RomikSniady2015}. In this context, the
$u$-coordinate $u_n$ of the jeu de taquin box $j_n$ can be interpreted as the
position of \emph{the second class particle} in this interacting particle system
at time $n$. Thus, part of \cref{conj:jdt} can be reformulated as follows.

\begin{conjecture}
	Consider the Plancherel-TASEP interacting particle system and let $u_n$ be the
	position of the second class particle at time $n$. There exists a random
	variable $V$ with the semicircle distribution $\mu_{\SC}$ on the interval
	$[-2,2]$ such that the random function
	\[
	\R_+ \ni t \mapsto \frac{\sqrt[4]{c}}{\sigma_V} 
	\left[\frac{u_{\lfloor c t^2 \rfloor}}{\sqrt{c}} - V t \right]
	\]
	converges in distribution to the standard Brownian motion $B(t)$ as $c \to \infty$.
\end{conjecture}

We suspect that in the above conjecture $V = F_{\SC}^{-1}(z)$ can be defined as
the appropriate quantile of the semicircle distribution.

Heuristically, this conjecture suggests that asymptotically the second class
particles follow a drift with the random velocity $V$ plus a (rescaled)
Brownian motion, so that the approximate equality
\[
u_{\lfloor t^2 \rfloor} \approx  V t + \sigma_V B(t)
\]
holds true for $t \to \infty$. In particular, the fluctuations are of order
$t^{\frac{1}{2}}$. It would be interesting to explore the links of this
conjecture with analogous results available for the usual TASEP model and the
competition interface \cite[Section~2.3]{FerrariMartin2009}, \cite{RahmanVirag2021}. 
It \emph{seems} that the corresponding fluctuations are superdiffusive, of order
$t^{\frac{2}{3}}$; however, we are not aware of a definitive, rigorous treatment
of this topic in the literature.

\section{Main results and structure of the paper}

\subsection{Poissonized tableaux: a generalization}

\subsubsection{Plancherel tableau, the key example}
\label{sec:plancherel-tableau}

In \cref{sec:intro}, we introduced \cref{thm:Romik-Sniady-determinism} and
\cref{conj:CLT-Plancherel}, which focused on the Schensted row insertion $T
\leftarrow z$ for a specific random tableau $T = P(w_1, \dots, w_n)$. This
tableau, generated by applying the RSK algorithm to a sequence of \iid random
variables with the uniform $U(0,1)$ distribution, is referred to as a
\emph{Plancherel-distributed random Poissonized tableau with $n$ boxes}. While
significant, this is just one example of a broader class known as
\emph{Poissonized tableaux of a given shape}, which we will explore in detail.

\subsubsection{Poissonized tableaux of a given shape}
\label{sec:poissonized}

Poissonized tableaux \cite{GorinRahman2019} are a generalization of standard
Young tableaux where entries are real numbers from the unit interval $[0,1]$. We
denote the set of Poissonized tableaux with shape $\lambda$ as
$\mathcal{T}^{\lambda}$.

Each Poissonized tableau in $\mathcal{T}^{\lambda}$ can be represented as a
point in the unit cube $[0,1]^n$, where $n$ is the number of boxes in $\lambda$.
The increasing row and column constraints define a set of inequalities, making
$\mathcal{T}^{\lambda}$ identifiable with a convex polytope in $[0,1]^n$. This
polytope has positive volume, allowing us to equip it with a uniform probability
measure. Thus, it makes sense to speak about a \emph{uniformly random
	Poissonized tableau with shape $\lambda$}.

\subsubsection{Additional randomness of the shape}

This class becomes even broader if we allow $\lambda$ itself to be a random
variable. In this case, the \emph{uniformly random Poissonized tableau with
	shape $\lambda$} results from a two-step sampling process: first, we sample
$\lambda$ with some prescribed probability distribution, and then we sample a
Poissonized tableau with this shape.

As we already mentioned, a notable example within this broader class is the
aforementioned random Plancherel tableau which arises as the insertion tableau
corresponding to a sequence of \iid $U(0,1)$ random variables, see \cite[proof
of Lemma~5.1]{MarciniakSniadyAlternating}. This connection to the RSK algorithm
provides a bridge between Poissonized tableaux and classical combinatorial
constructions.

This framework offers significant flexibility in selecting the desired level of
randomness, ranging from $\lambda$ being a deterministic Young diagram to more
randomized examples. This flexibility will prove advantageous in subsequent
discussions. For instance, we will utilize some results from our recent work
\cite{MarciniakSniadyAlternating}, which concern the setup in which $\lambda$ is
deterministic. The main results of the current paper address scenarios where
$\lambda$ is either deterministic or random.

\subsubsection{Challenges in applying results to random Plancherel tableaux}

The setup of random Plancherel tableaux in \cref{conj:CLT-Plancherel} fits
within the general framework of \cref{thm:CLT-shape,thm:CLT-trans}. However, an
insufficient understanding of the level of randomness in the shape of $\lambda$
creates a gap that prevents directly applying these theorems to prove
\cref{conj:CLT-Plancherel}.

The primary challenge lies in the specific rate of convergence required by
\cref{thm:CLT-shape,thm:CLT-trans} for the tableau shape to converge towards the
limit shape. We are uncertain whether Plancherel-distributed random Young
diagrams achieve this rate, which hinders the direct application of these
theorems to prove \cref{conj:CLT-Plancherel}. See
\cref{sec:towards-big-conjecture} for more details.

\subsection{Assumptions of the main results. 
	Challenges in describing large Young diagrams} 
	\label{sec:describing-large-diagrams}

Describing large Young diagrams in asymptotic problems poses some
challenges. In the following, we will examine two standout methods developed to
address these issues.

\subsubsection{Method 1: continual diagrams and the Russian convention}

The first method involves using \emph{the Russian convention} (see
\cref{sec:young-diagrams}) and the \emph{profiles} / \emph{\conti diagrams} (see
\cref{sec:scaling,sec:continuous}). This approach provides an intuitive
understanding of the Young diagram shape as well as its convergence rates in
asymptotic problems. 
We used it to state the results and conjectures in
\cref{sec:intro}.

\subsubsection{Method 2: Kerov's transition measure}

The second method utilizes Kerov's \emph{transition measure}, a foundational
concept introduced by Sergey Kerov \cite{Kerov1993-transition,KerovBook}. This measure
represents the shape of a Young diagram as a centered probability measure on the
real line, see \cref{sec:transition-measure}. This very natural concept is
directly linked to \emph{the Plancherel growth process}, a fundamental tool for
harmonic analysis on the Young graph. Additionally, the moments of the
transition measure are related to the irreducible characters of the
symmetric group~\cite{Biane1998}. However, the bijection between continual
diagrams and centered probability measures is complex and non-local. Moreover,
the transition measure does not provide a direct intuitive understanding of the
rate of convergence for Young diagrams.

\subsubsection{Assumptions of the main results. Navigating between the two viewpoints}

The assumptions of our main results can be expressed using either method, as
summarized in \cref{tab:main_results}. The transition measure approach
(\cref{thm:CLT-trans,thm:determinism-old}) offers the most natural expression
for our findings. However, recognizing that some readers may find the transition
measure less intuitive, we also present results using continual diagrams
(\cref{thm:determinism-new,thm:CLT-shape}). This dual presentation ensures
accessibility to a broader audience, accommodating diverse conceptual
preferences.

The key distinctions between these methods are noteworthy:
\begin{itemize} 
	\item 
	Theorems based on
Kerov's transition measure have weaker assumptions, potentially allowing for
broader applications. These theorems are also proven directly, without any
detours. 
\item Conversely, theorems formulated using continual diagrams require stronger,
more intuitive assumptions, which can be advantageous for certain analyses,
albeit with a potentially narrower range of applications. Additionally, their
proofs are not direct; initially, we must reduce them to their counterparts
formulated in the language of Kerov's transition measure.
\end{itemize}

By presenting our findings through both lenses, we aim to provide a
comprehensive understanding of the problem.

\subsection{Conclusions of the main results}

\begin{table}[tbp]
	\centering
	\begin{tabular}{|l|c|c|}
		\hline
		\multirow{2}{*}{\textbf{Assumption: description of Young diagram $\lambda$}} & \multicolumn{2}{c|}{\textbf{Conclusion}} \\
		\cline{2-3}
		& \textbf{First-order asymptotics} & \textbf{Gaussian fluctuations} \\
		\hline
		via continual diagrams & \cref{thm:determinism-new} & \cref{thm:CLT-shape} \\
		\hline
		via Kerov's transition measure & \cref{thm:determinism-old} & \cref{thm:CLT-trans} \\
		\hline
	\end{tabular}
	\caption{Main results classified by conclusion and assumption type.}
	\label{tab:main_results}
\end{table}

\cref{tab:main_results} summarizes the four main results of this paper,
categorized by assumption type and conclusion. Among these findings, we particularly
highlight \cref{thm:CLT-shape} as our key contribution.
The results fall into two categories:
\begin{itemize}
	\item \textbf{First-order asymptotics
	(\cref{thm:determinism-new,thm:determinism-old}):} These results, analogous to
	the law of large numbers, demonstrate that the rescaled position of the new box
	converges in probability to a specific point on the plane and generalize the
	findings of Romik and \sniady (\cref{thm:Romik-Sniady-determinism}). These
	results require relatively weak assumptions about the convergence of the
	tableau's shape towards a limit shape.
	
	\item \textbf{Gaussian fluctuations (\cref{thm:CLT-shape,thm:CLT-trans}):}
	Similar to the central limit theorem, these findings  show that the rescaled
	fluctuation of the new box around the first-order approximation converges to a
	Gaussian distribution. These results align with \cref{conj:CLT-Plancherel}.
	These results have more stringent requirements, necessitating either the
	tableau shape or the \CDF of the corresponding
	transition measure to converge towards the limit at a prescribed rate.

\end{itemize}

\subsection{Proof strategy for the asymptotic Gaussianity of the new box position}

The crux of our paper lies in proving \cref{thm:CLT-trans}, which establishes
the asymptotic Gaussianity of the $u$-coordinate for the new box created during
Schensted row insertion.

Our approach builds upon our previous work \cite{MarciniakSniadyAlternating},
centering on the \emph{cumulative function} $u \mapsto F_T(u)$ as our primary
analytical tool. This function establishes a relationship between the position
of a newly created box and the value of the inserted number for a given
tableau~$T$, see \cref{sec:cumulative}. In our earlier analysis, we treated
$F_T(u)$ as a threshold, differentiating between small inserted numbers
(yielding new boxes to the left of $u$) and large inserted numbers (resulting in
new boxes to the right of $u$).

A crucial insight for the current paper is the inverse relationship between the
insertion function $z \mapsto \uIns(T; z)$ and the cumulative function $u
\mapsto F_T(u)$. This relationship parallels that of a probability
distribution's \CDF and its corresponding quantile function.
Consequently, to determine the $u$-coordinate $\uIns(T; z)$ of the new box for a
specific inserted value $z$, we must comprehend the threshold value $F_T(u)$
across the entire spectrum of $u$-coordinates.

An additional layer of complexity arises from the randomness of tableau $T$
(specifically, a uniformly random Poissonized tableau of fixed shape $\lambda$),
rendering the corresponding cumulative function $F_T$ random as well. To address
this, we introduce the \emph{double cumulative function}
$\doublecumulative_\lambda(u, z)$, a two-dimensional analogue of the usual
\CDF from the probability theory, see
\cref{sec:double-cumulative}. For a given diagram $\lambda$, the value of
$\doublecumulative_\lambda(u, z)$ represents the probability that the plot of
the cumulative function $u\mapsto F_T(u)$ lies southeast of the point $(u,z)$.
This construct integrates information about the new box's position, the inserted
number, and the associated probability, providing a comprehensive analytical
framework for our study.

Fixing one argument of the double cumulative function yields interesting
observations: 
\begin{itemize} 
	\item For a fixed $u$-coordinate $u_0$, the map $z \mapsto
	\doublecumulative_{\lambda}(u_0, z)$ coincides with the cumulative distribution
	function of the threshold value $F_T(u_0)$. Our previous work \cite{MarciniakSniadyAlternating} 
	established
	explicit upper bounds on its cumulants, allowing us to prove the asymptotic
	Gaussianity of $F_T(u_0)$. Consequently, we can approximate
	$\doublecumulative_{\lambda}(u, z)$ using the \CDF
	of the standard normal distribution.

	\item For a fixed inserted number $z$, the map $u \mapsto 1 -
\doublecumulative_{\lambda}(u, z)$ is the \CDF of
the $u$-coordinate $\uIns(z)$ of the new box created during insertion. This is
precisely the random variable we need to analyze. The asymptotics of the double
cumulative function found in the previous step provides the necessary
information to conclude the Gaussianity of $\uIns(z)$ which concludes the proof.
\end{itemize}

\subsection{Content of the paper}

This paper is structured as follows:

In \Cref{sec:preliminaries}, we introduce the necessary vocabulary for our
results, particularly focusing on \emph{Kerov's transition measure} of a Young
diagram, which will play an important role.

\Cref{sec:general-form} presents the two main results
(\cref{thm:determinism-new,thm:CLT-shape}) with the assumptions expressed in the
language of continual diagrams.

In \Cref{sec:main-thm-via-transmeasure}, we introduce \cref{thm:CLT-trans},
which reformulates \cref{thm:CLT-shape} under weaker assumptions and serves as
an intermediate step in its proof. The two theorems differ in their approach to
the assumptions: 
\begin{itemize} 
	\item 
	\Cref{thm:CLT-shape} is expressed in terms of the shape of the Young diagrams,	
	providing a more concrete geometric interpretation.
	
	\item 
	\Cref{thm:CLT-trans} is formulated in terms of the
	transition measures of the diagrams, offering a more general probabilistic
	perspective. 
\end{itemize}
This distinction allows \cref{thm:CLT-trans} to serve as a bridge between the
geometric and probabilistic aspects of the problem. It facilitates the proof of
\cref{thm:CLT-shape} while potentially extending its applicability to a broader
class of random structures.

\Cref{sec:old-version-implies-new-version} presents the proof that the main
result in the second formulation (\cref{thm:CLT-trans}) implies the main result
in the first formulation (\cref{thm:CLT-shape}).

In \Cref{sec:towards-proof}, we revisit the \emph{cumulative function} $u
\mapsto F_T(u)$, a concept we first introduced in our recent work
\cite{MarciniakSniadyAlternating}. In \cref{lem:alternating-trees}, we summarize
the key findings from that paper. While the original work presented a wealth of
results, \cref{lem:alternating-trees} focuses specifically on the aspects that
are essential for our current context.

In \Cref{sec:double-cumulative}, we introduce our primary tool: the \emph{double
	cumulative function} $\doublecumulative_{\lambda}$. This function provides
comprehensive insights into the relationship between the number inserted into a
random Poissonized tableau of shape $\lambda$, the location of the new box, and
the associated probabilities.

In \Cref{sec:first-order-double-function}, we present and prove
\cref{lem:convergence-easy}. This result offers a first-order approximation for
the asymptotic behavior of the double cumulative function, serving as a pivotal
step toward establishing the determinism of the Schensted insertion, as detailed
in \cref{thm:determinism-new}.

In \Cref{sec:fine-asymptotics}, we establish \cref{thm:gaussian-profile}, which
elucidates the fine asymptotic behavior of the double cumulative function
$\doublecumulative_{\lambda}$. This result is intricately connected to the
\CDF of the standard normal distribution and plays a
crucial role in the proof of \cref{thm:CLT-trans}.

Finally, \Cref{sec:proofs-of-main-results} culminates in the demonstration of
our principal findings: \cref{thm:determinism-new} and \cref{thm:CLT-trans}.
Here, we leverage the tools developed in the preceding sections to achieve our
conclusions.

\section{Preliminaries: transition measure and \conti diagrams}
\label{sec:preliminaries}

\subsection{Plancherel growth process}

Let $w_1, w_2, \ldots$ be a sequence of \iid random variables with the uniform distribution $U(0,1)$. We define:
\[
\lambda^{(n)} := \RSK(w_1, \ldots, w_n).
\]
The resulting random sequence of Young diagrams:
\begin{equation}
\label{eq:Plancherel-growth-process}	
\emptyset = \lambda^{(0)} \nearrow \lambda^{(1)} \nearrow \lambda^{(2)} \nearrow \cdots
\end{equation}
is called the \emph{Plancherel growth process}.
This process forms a Markov chain with specific transition probabilities which
will be discussed below. The construction described above is a special case of
the more general setup discussed earlier in Section~\ref{sec:extremal}.

\subsection{Transition measure of a Young diagram}
\label{sec:transition-measure}

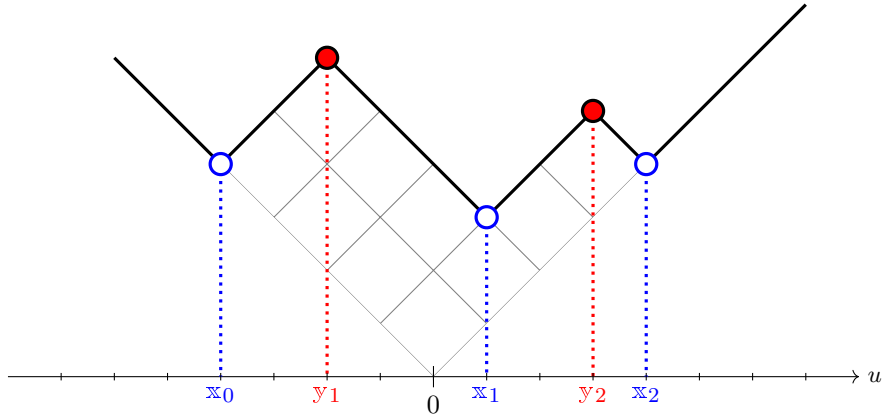
\begin{figure}
    	
	\begin{tikzpicture}[scale=1,rotate=45]
		\coordinate (p0) at (4,0);
		\coordinate (p1) at (4,1);
		\coordinate (p2) at (2,1);
		\coordinate (p3) at (2,4);
		\coordinate (p4) at (0,4);
		\coordinate (p5) at (0,6);
		\begin{scope}
			\clip  (p0) -- (p1) -- (p2) -- (p3) -- (p4) -- (0,0);
			\draw[black!50] (0,0) grid (10,10);
		\end{scope}
		\draw[very thick] (7,0) -- (p0) -- (p1) -- (p2) -- (p3) -- (p4) -- (p5);
		\draw[->] (-4,4) -- (4,-4) node[anchor=west] {$u$};
		\draw[dotted,blue,very thick] (p0) -- (4/2,-4/2) node[anchor=north] {\small $\kerx_2$};
		\draw[dotted,blue,very thick] (p2) -- (1/2,-1/2) node[anchor=north] {\small $\kerx_1$};
		\draw[dotted,blue,very thick] (p4) -- (-4/2,4/2) node[anchor=north] {\small $\kerx_0$};
		\draw[dotted,red,very thick] (p1) -- (3/2,-3/2) node[anchor=north] {\small $\kery_2$};
		\draw[dotted,red,very thick] (p3) -- (-2/2,2/2) node[anchor=north] {\small $\kery_1$};
        \draw (0,0) +(0.1,0.1) -- +(-0.1,-0.1) node[anchor=north] {\small $0$};
		\draw[blue,very thick,fill=white] 
		(p0) circle (4pt)
		(p2) circle (4pt)
		(p4) circle (4pt);
		\draw[very thick,fill=red] 
		(p1) circle (4pt)
		(p3) circle (4pt);
		
		\foreach \u in {-7,...,7}
		{\draw (\u/2,-\u/2)  +(-1pt,-1pt) -- +(1pt,1pt); }
	\end{tikzpicture}
    \caption{Concave corners (empty, blue) and convex corners (filled, red) 
    	    of a Young diagram $(4,2,2,2)$ and their $u$-coordinates.}
    \label{fig:cauchy}
\end{figure}

For a given Young diagram~$\lambda$ with $n$ boxes we denote by
$\kerx_0<\cdots<\kerx_{\kerell}$ the $u$-coordinates of its concave corners and
by $\kery_1<\cdots<\kery_\kerell$ the $u$-coordinates of its \emph{convex
	corners}, see \cref{fig:cauchy}. The \emph{Cauchy transform} of $\lambda$ is
defined as the rational function (see \cite{Kerov1993-transition} and \cite[Chapter~4,
Section~1]{KerovBook}):
\begin{equation}
    \label{eq:cauchy-def}     
    \cauchy_\lambda(z)= 
        \frac{(z-\kery_1)\cdots(z-\kery_\kerell)}{(z-\kerx_0)\cdots(z-\kerx_\kerell)}.
\end{equation}
Note that in the work of Kerov this function is called \emph{the generating function} of $\lambda$.
The Cauchy transform can be written uniquely as a sum of simple fractions:
\[ \cauchy_\lambda(z)= \sum_{0\leq i \leq \kerell}  \frac{p_i}{z-\kerx_i} \]
with the coefficients $p_0,\dots,p_\kerell>0$ such that $p_0+\cdots+p_\kerell=1$.
We define \emph{the transition measure} of $\lambda$ 
as the discrete measure
\[ \mu_\lambda= p_0\ \delta_{\kerx_0} + \cdots + p_\kerell\ \delta_{\kerx_\kerell};\]
in this way
\[ \cauchy_\lambda(z) = \int_{\R} \frac{1}{z-x} \dif \mu_\lambda(x)\]
is indeed the Cauchy transform of $\mu_\lambda$. 

Kerov proved that the transition probabilities of the Markov chain
\eqref{eq:Plancherel-growth-process} are encoded by the transition measure, see
\cite{Kerov1993-transition} and \cite[Chapter~4, Secton~1]{KerovBook}. More specifically,
the conditional probability that the new box will have the $u$-coordinate equal
to~$\kerx_i$ is given by the corresponding atom of the transition measure
\begin{equation}
    \label{eq:residuum} 
    \Pro\giventhat*{ u\left(\lambda^{(n+1)} / \lambda^{(n)}\right)
    	= \kerx_i }{ \lambda^{(n)}=\lambda } 
        =  p_i.   
\end{equation}

We denote by 
\begin{equation}
    \label{label:CDFkerov}
    K_\lambda(u)= \mu_\lambda\big( (-\infty,u] \big) \qquad \text{for } u\in\R
\end{equation}
the \CDF of $\mu_{\lambda}$. To keep the
notation short, we will call it \emph{the cumulative function of $\lambda$}.

\subsection{\Conti diagrams}
\label{sec:continuous}

We say that $\omega\colon \R \to \R_+$ is a \emph{\conti diagram} (see
\cite{Kerov1993-transition} and \cite[Chapter~4, Secton~1]{KerovBook}) if the following two
conditions are satisfied:
\begin{itemize}
	\item $|\omega(u_1) - \omega(u_2) | \leq | u_1 - u_2|$ holds true for any
	$u_1,u_2\in\R$,
	\item $\omega(u) = |u|$ for sufficiently large $|u|$.
\end{itemize}
In the literature, such \conti diagrams are also referred to as
\emph{generalized diagrams}, \emph{continuous diagrams}, or simply
\emph{diagrams}. 

Notably, an important class of examples arises from the
profiles $\omega_{\lambda}$ associated with the usual Young diagrams. Another
important class of examples arises from the \emph{rescaled} profiles
\begin{equation}
	\label{eq:rescaled-normalized}
	   \omega_{\frac{1}{\sqrt{n}} \lambda^{(n)}}, 
\end{equation}
where $\lambda^{(n)}$ is a Young diagram with $n$ boxes, see \cref{sec:scaling}.
The motivation for the scaling factor in \eqref{eq:rescaled-normalized} lies in
the observation that if each box of $\lambda^{(n)}$ is drawn as a square with
the side~$\frac{1}{\sqrt{n}}$ then the total area of the boxes is equal to $1$,
irrespective of the Young diagram size~$n$. In the current paper the \conti
diagrams will serve as limit objects toward which a sequence of (rescaled) Young
diagrams of the form \eqref{eq:rescaled-normalized} can converge.

\subsection{Transition measure for \conti diagrams} 

Let $\omega=\omega_{\lambda}$ be a profile of a Young diagram $\lambda$; we will
use the notations from \cref{fig:cauchy}. The second derivative $\omega''$ is
well-defined as a Schwartz distribution and can be identified with a signed
measure on the real line. The positive part of this measure is supported in the
concave corners $\kerx_0,\dots,\kerx_{\kerell}$ while the negative part of this
measure is supported in the convex corners $\kery_1,\dots,\kery_\kerell$, with
each atom having equal weight~$2$. An application of logarithm transforms the
product on the right-hand side of \eqref{eq:cauchy-def} into a sum. It was
observed by Kerov that it can be conveniently written in the form
\begin{multline}
	\label{eq:transition-general}
	\log\big[ z \cauchy_\omega(z) \big]= - 
\int_{-\infty}^\infty \log (z-w) \ \left(\frac{\omega(w)-|w|}{2}\right)'' \dif w = \\
- \int_{-\infty}^\infty \frac{1}{z-w} \ \left(\frac{\omega(w)-|w|}{2}\right)' \dif w,
\end{multline}
where $z$ is a complex variable.

We can drop the assumption that $\omega=\omega_{\lambda}$ is a profile of a
Young diagram and use \eqref{eq:transition-general} to define the Cauchy
transform $\cauchy_\omega$ for an arbitrary \conti diagram $\omega$. The
transition measure~$\mu_\omega$ is then defined from the Cauchy transform
$\cauchy_\omega$ via Stieltjes inversion formula. The \emph{cumulative function}
$K_\omega$ of a \conti diagram $\omega$ is defined analogously as in
\eqref{label:CDFkerov} as the \CDF of the
corresponding transition measure
\[  K_\omega(u)= \mu_\omega\big( (-\infty,u] \big) \qquad \text{for $u\in\R$.} \]

\subsection{Transition measure as a homeomorphism}

For $C>0$ we say that \emph{a \conti diagram~$\omega$ is supported on the
	interval $[-C,C]$} if $\omega(u)=|u|$ for each real number $u$ such that
$|u|\geq C$. For example, for a rescaled Young diagram  $\omega=\omega_{c
	\lambda}$ this holds true if and only if the rescaled first row, as well as
rescaled first column are shorter than $C$:
\[ c \lambda_1 \leq C, \qquad c \lambda'_1 \leq C .\]

The following result is due to Kerov (see \cite{Kerov1993-transition} and
\cite[Chapter~4, Section~1]{KerovBook}).

\begin{proposition}
	\label{prop:homeomorphism} 
	
	For each constant $C>0$, the map $\omega \mapsto
	\mu_{\omega}$, which associates a \conti diagram with its transition measure,
	is a \emph{homeomorphism} between the following two topological spaces:
	\begin{itemize}
		\item The set of \conti diagrams supported on $[-C,C]$, with the
		topology given by the supremum distance.
		
		\item The set of centered probability measures supported on the
		interval $[-C, C]$, with the weak topology of probability measures.
	\end{itemize}
\end{proposition}

This convenient homeomorphism allows us to change the perspective and to
parameterize the set of (\conti) diagrams not by their profiles, but via their
transition measure. This viewpoint has proven to be very fruitful for problems
in asymptotic representation theory; see
\cite{Kerov1993-gaussian,Biane1998,Sniady2006a,DFS} for some examples.

\subsection{The metric $d_{XY}$ on the set of \conti diagrams} 

While the supremum distance provides a useful topology for the set of \conti
diagrams (see, for example \cref{prop:homeomorphism}), some problems benefit
from a stronger topology. For this purpose, we introduce the metric $d_{XY}$,
recently proposed by \sniady \cite{Sniady2024modulus}. This metric offers a more
nuanced approach, essentially measuring the supremum distance along both the
$x$-coordinate and the $y$-coordinate separately. Heuristically, we can think of
$d_{XY}$ as capturing the maximum discrepancy between two diagrams when viewed
from both horizontal and vertical perspectives. This approach provides a
finer-grained comparison between \conti diagrams, potentially revealing subtle
differences that the standard supremum distance might overlook. The formal
definition of the metric $d_{XY}$ is detailed below.

\smallskip

For the purposes of this section we will use the following notational
shorthand. For $x,y\geq 0$ and a \conti diagram $\omega\colon\R \to\R_+$ we
will write $(x,y) \in \omega$ if the point $(x,y)$ belongs to the graph of
$\omega$ \emph{drawn in the French coordinate system} or, equivalently, if
\[   \omega(x-y) = x+y, \]
cf.~\eqref{eq:Russian}.  Additionally, for $x\geq 0$ we denote by 
\[ \Pi_Y^\omega(x) = \big\{ y : (x,y) \in \omega \bigskip\} \subseteq \R_+ \]
the projection on the $y$-axis of the intersection of the plot of $\omega$ with
the line having a specified $x$-coordinate. One can verify that
$\Pi_Y^\omega(x)$ is a non-empty closed set. Similarly, for $y\geq 0$ we denote
by
\[ \Pi_X^\omega(y) = \big\{ x : (x,y) \in \omega \bigskip\} \subseteq \R_+\]
the projection on the $x$-axis.

\smallskip

If $\omega_1,\omega_2$ are \conti diagrams, we define their $y$-distance
\[ d_Y(\omega_1, \omega_2) =  
\sup_{x\geq 0} d_H\big( \Pi_Y^{\omega_1}(x), \Pi_Y^{\omega_2}(x) \big)
\]
as the supremum (over all choices of the $x$-coordinate) of the \emph{Hausdorff
	distance} $d_H$ between their $y$-projections. Heuristically, the metric
$d_Y$
is a way to quantify the discrepancy between \conti diagrams along the
$y$-coordinate. In a similar manner we define the $x$-distance between
\conti diagrams as
\[ d_X(\omega_1, \omega_2) =  
\sup_{y\geq 0} d_H\big( \Pi_X^{\omega_1}(y), \Pi_X^{\omega_2}(y) \big).
\]

\begin{remark}
	\label{rem:dX-concrete}
Note that in the special case when $\omega_1= \omega_{\lambda}$ and
$\omega_2= \omega_{\mu}$ are profiles of the Young diagrams
$\lambda=( \lambda_1, \lambda_2, \dots )$ and $\mu=(
\mu_1, \mu_2, \dots )$ this distance has a simple
interpretation
\[ d_X( \omega_\lambda, \omega_\mu ) = \max_i \big| \lambda_i - \mu_i \big| \]
as the supremum distance between the parts of the partitions. Similarly, the
$y$-distance $d_Y( \omega_\lambda, \omega_\mu)$ is the supremum distance for the
parts of the conjugate partitions $\lambda'$ and $\mu'$.
\end{remark}

\smallskip

Finally, we define the $xy$-distance between $\omega_1$ and $\omega_2$ as the
maximum of the $x$- and the $y$-distance:
\[ d_{XY}(\omega_1,\omega_2) = \max\big( d_X(\omega_1,\omega_2),\ 
d_Y(\omega_1,\omega_2) \big). \]

\section{The general form of RSK insertion fluctuations}
\label{sec:general-form}

\subsection{Asymptotic determinism of Schensted insertion}

We start with the following generalization of the aforementioned result of Romik
and \sniady (\cref{thm:Romik-Sniady-determinism}). 

\begin{theorem}[Asymptotic determinism of Schensted insertion]
    \label{thm:determinism-new}
    Assume that:
    \begin{itemize}    
\item For each $n\geq 1$ we are given a random Young diagram $\lambda^{(n)}$; we
denote by
\[ \omega_n = \omega_{\frac{1}{\sqrt{n}} \lambda^{(n)}} \] 
its rescaled profile.

\item There is a \conti diagram $\Omega$ such that the convergence in probability
\[ \omega_n(u) \xrightarrow[n \to\infty]{P} \Omega(u) \]
holds true for each $u\in \R$.

\item There is a constant $C>0$ such that the probability of the event that the
rescaled diagram~$\omega_{n}$ is supported on the interval $[-C, C]$ converges
to $1$, as $n\to\infty$.

\item A real number $0<z <1$ is such that the quantile function $K_{\Omega}^{-1}$ of
the transition measure~$\mu_\Omega$ is continuous in $z$.
\end{itemize}

\smallskip

	Let $T^{(\nklatki)}$ be a uniformly random Poissonized tableau of shape
$\lambda^{(\nklatki)}$. Then the rescaled $u$-coordinate of the new box created
by the insertion of $z$
    \[ 
    \frac{1}{\sqrt{n}} \uIns( T^{(n)}; z)  \xrightarrow[n\to\infty]{P}
    K^{-1}_{\Omega}(z) \]
    converges in probability to the corresponding quantile of the transition measure $\mu_\Omega$.
\end{theorem}
In particular, \cref{thm:determinism-new} combined with the classic result of
Logan, Shepp, Vershik and Kerov (\cref{thm:plancherel-limitshape}, \emph{``the
	asymptotic shape of random Plancherel-distributed Young diagrams''}) provides a
conceptually new proof of the result by Romik and \sniady
(\cref{thm:Romik-Sniady-determinism}, \emph{``the asymptotic determinism of RSK
	insertion for \iid random variables''}).

The proof of \cref{thm:determinism-new} is postponed to \cref{sec:proof-of-determinism}.

\begin{remark}
	To avoid potential confusion, we clarify that throughout this paper---in 
	\cref{thm:determinism-new} above, as well as in 
	Theorems \ref{thm:CLT-shape}, \ref{thm:CLT-trans}, and \ref{thm:determinism-old} in other 
	sections---the parameter~$n$ is a ``size'' variable that grows roughly 
	linearly with the number of boxes. While in some specific situations $n$ 
	may equal the actual number of boxes, these theorems do not require the 
	Young diagram $\lambda^{(n)}$ to contain exactly $n$ boxes, nor do they 
	assume that the ratio $\frac{|\lambda^{(n)}|}{n}$ converges to $1$.
\end{remark}

\subsection{The interaction energy}
\label{sec:may-force-be-with-you}

If $\mu$ is a probability measure on the real line and $u_0\in \R$ we define
\begin{equation}
    \label{eq:interaction-enerrgy}
    \Force_\mu(u_0) = 
    \iint\limits_{\{(z_1,z_2) :\  z_1 \leq u_0 < z_2 \}}
    \frac{1}{z_2-z_1} \dif \mu(z_1) \dif \mu(z_2)
\end{equation}
whenever this double integral is finite. This quantity will play an important
role in the statement of our main result.

Let us interpret $\mu$ as the distribution of the electrostatic charge along a
one-dimensional rod. If we split this rod at the point $u_0$, the two parts will
act on one another with the electrostatic force. The integral
\eqref{eq:interaction-enerrgy} can be interpreted as the \emph{interaction
	energy} between these two parts.

\medskip

We now consider an alternative universe in which the space is two-dimensional so
that the electrostatic force decays as the inverse of the distance between the
charges. It follows that the double integral $\Force_\mu(u_0)$ is equal to the 
\emph{electrostatic force} between the two parts of the rod.

If we separate the two parts of the rod by an additional distance $l$, the
aforementioned electrostatic force is the derivative of the total energy of the
system with respect to the variable $l$. It is worth pointing out that in our
context \emph{the total energy} should be understood as the logarithmic energy
which is ubiquitous in random matrix theory \cite{BenArous-Guionnet}, Voiculescu's free entropy
\cite{Voiculescu1994}, as well as in asymptotic representation theory
\cite[page~46, quantity $Q(h)$]{Romik2015}. For this reason we suspect that the
interaction energy $\Force_\mu(u_0)$ can be found also in the context of random
matrix theory.

\medskip

Suppose that the support of the measure $\mu$ is contained in some interval
$[u_{\min}, u_{\max}]$. We choose an arbitrary real number $u_{-\infty}$ such
that $u_{-\infty}< u_{\min}$ and consider a contour $\contour$ on the complex
plane shown in \cref{fig:contour}. This contour was chosen in such a way that it
crosses the real line in two points, namely, $u_{-\infty}$ and $u_0$.

\begin{figure}
        
\begin{tikzpicture}
\begin{scope}[thick,red,decoration={
        markings,
        mark=at position 0.9 with {\arrow{>}}}
    ] 
    \draw[postaction={decorate}] (-3,-2)--(1,-2);
    \draw[postaction={decorate}] (1,-2) --(1,2);
    \draw[postaction={decorate}] (1,2)--(-3,2);
    \draw[postaction={decorate}] (-3,2)--(-3,-2);
\end{scope}

    \draw[->] (-5,0) -- (5,0);
    \draw[->] (0,-3) -- (0,3);
    \draw[ultra thick] (-2,0) -- (3,0);
    \draw[ultra thick] (-2,-0.1)-- (-2,0.1) node[anchor=south]{$u_{\min}$};
    \draw[ultra thick] (3,-0.1)-- (3,0.1) node[anchor=south]{$u_{\max}$};
    \draw[ultra thick] (1,-0.1)-- (1,0.1) node[anchor=south west]{$u_0$};
    \draw[ultra thick] (-3,-0.1)-- (-3,0.1) node[anchor=south east]{$u_{-\infty}$};

\end{tikzpicture}
    \caption{The contour $\contour$ on the complex plane.}
    \label{fig:contour}
\end{figure}
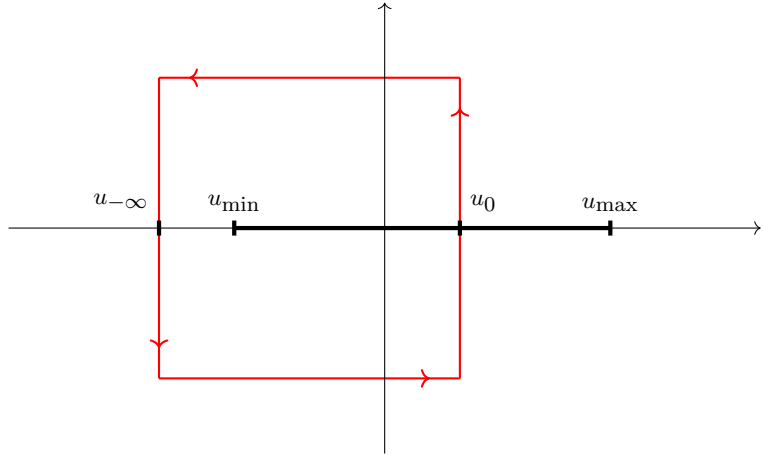

In the special case when the support of $\mu$ is a finite set
and $u_0$ does not belong to the support of $\mu$ we may apply Cauchy's
residue theorem and the interaction energy is equal
to the contour integral
\begin{equation}
    \label{eq:energy-contour-integral}
    \Force_{\mu_\Omega}(u_0) = - \frac{1}{4 \pi i} \oint_{\contour} \big[ \cauchy_\mu(z) \big]^2 \dif z,
\end{equation}
where 
$$ \cauchy_\mu(z) = \int_{\R} \frac{1}{z-x} \dif \mu(x)$$
is the Cauchy transform of $\mu$. The formula \eqref{eq:energy-contour-integral}
remains true for general measures $\mu$ as long as the Cauchy transform is
sufficiently regular in the neighborhood of $u_0$, however the justification of
its validity is more technically involved. In practical applications the formula
\eqref{eq:energy-contour-integral} is more convenient than
\eqref{eq:interaction-enerrgy}.

\subsection{The main theorem: General form of the fluctuations}

\begin{theorem}[The main theorem; fluctuations of Schensted insertion are Gaussian]
	\label{thm:CLT-shape}
    Assume that:
\begin{enumerate}[label=(A\arabic*)]   
	\item For each $n\geq 1$ we are given a random Young diagram $\lambda^{(n)}$; we denote by
	\begin{equation}
		\label{eq:rescaled-omega}
			 \omega_n = \omega_{\frac{1}{\sqrt{n}} \lambda^{(n)}} 
	\end{equation} 
	its rescaled profile.	
	
	\item \label{item:A2}
	There is a \conti diagram $\Omega$ which is the limit of $\omega_n$ (with
respect to the metric $d_{XY}$), furthermore the rate of convergence is such that
	\[ \sqrt[4]{n}\ \log n \ \cdot \ d_{XY}(\omega_n, \Omega)
	 \xrightarrow[n\to\infty]{P} 0. \]

	\item \label{item:transition-measure-omega-is-absolutely-continuous}
	There are constants $a<u_0<b$ such that the transition measure $\mu_\Omega$
restricted to the interval $[a,b]$ is absolutely continuous, with a density
given by a continuous, strictly positive function $f>0$. We denote 
\[z_0= K_\Omega(u_0), \qquad 
\Force_0 = \Force_{\mu_\Omega}(u_0), \qquad 
f_0 = f(u_0). \]

\item \label{item:A4} The restriction  $\Omega\colon [a,b] \to\R$ of the \conti diagram $\Omega$
to the interval $[a,b]$ is a contraction (i.e., it is a $K$-Lipschitz function
for some constant $K<1$).

\end{enumerate}	
	\medskip
	
	Let $T^{(\nklatki)}$ be a uniformly random Poissonized tableau of shape $\lambda^{(\nklatki)}$.
	Then the fluctuations of the newly created box around the first-order approximation
	\begin{equation}\label{eq:my-favorite-rv}
		\sqrt[4]{\nklatki} \left[ \frac{\uIns( T^{(\nklatki)}; z_0 )}{\sqrt{\nklatki}} -u_0 \right]
		\xrightarrow[\nklatki\to\infty]{\mathcal{D}} N\left( 0, \frac{\Force_0}{f_0^2} \right)
	\end{equation}    
	converge to a centered Gaussian measure with the variance
$\frac{\Force_0}{f_0^2}$.
\end{theorem}

The proof is quite involved, the first step is to reduce this theorem to
another result (\cref{thm:CLT-trans}). We postpone the details to
\cref{sec:old-version-implies-new-version}.

\subsection{Explicit examples for \cref{thm:CLT-shape}}
\label{sec:examples}

The objective of this section is to provide tangible examples that demonstrate
the applicability of \cref{thm:CLT-shape}. While many theoretical examples
exist, we specifically seek to move beyond abstract objects by focusing on cases
where the essential components—the sequence of Young diagrams, the limit \conti
diagram, its transition measure, and the
interaction energy—are all computationally tractable.

\subsubsection{Example: staircase tableaux}
\label{sec:staircase}

In \cref{thm:CLT-shape} we will pass to the subsequence of triangular numbers
\[ \nklatki={\nklatki}_N = 1+\cdots+N= \frac{N(N+1)}{2}. \]
We consider the sequence of deterministic Young diagrams $(\lambda^{(\nklatki_N)})$, where 
\begin{equation}\label{eq:staircase}
    \lambda^{(\nklatki_N)}= (N,N-1,\dots,3,2,1) 
\end{equation}
is a \emph{staircase Young diagram} with $\nklatki_N$ boxes. We define the
rescaled diagram $\omega_n$ via \eqref{eq:rescaled-omega}.

Let $\Omega\colon\R\to\R_+$ be the \emph{isosceles triangle diagram} given by
\[ \Omega(u)= \begin{cases}
	\sqrt{2} & \text{for $|u|\leq \sqrt{2}$},  \\
	|u|      & \text{otherwise}.
\end{cases}
\]

In the following we will verify that the assumptions of \cref{thm:CLT-shape} 
are satisfied for this configuration and we will calculate the missing data: 
the density of the transition measure and its interaction energy.

\medskip

The rate of convergence of the rescaled diagrams towards the limit is very fast:
\[ d_{XY}( \omega_n, \Omega ) = O\left( \frac{1}{N} \right)
= O\left( \frac{1}{\sqrt{n}} \right) \ll o\left( \frac{1}{\sqrt[4]{n} \log n} \right), \]
which ensures that \assum{item:A2} in \cref{thm:CLT-shape} holds. The
example of staircase Young diagrams and the rate of convergence of their
transition measures $\mu_{\omega_n}$ to the limit $\mu_{\Omega}$ were studied in
more detail in the recent work of \sniady \cite{Sniady2024modulus}.

\smallskip

Since the second derivative $\Omega''$ is a Schwartz distribution which can be
identified with a very simple measure supported in $\pm \sqrt{2}$, a
straightforward calculation based on \eqref{eq:transition-general} shows that
the corresponding Cauchy transform is given by
\begin{equation}
	\label{eq:cauchy-for-AS}
	 \cauchy_{\AS}(z)= \frac{1}{\sqrt{z^2-2}} \qquad \text{for $z\in\CC\setminus I$},
\end{equation}
where $I=\big[ -\sqrt{2}, \sqrt{2} \big]$. It is easy to verify (for example, by
the Stieltjes inversion formula) that the corresponding transition measure
$\mu_{\Omega}=\mu_{\AS}$ is the arcsine distribution which is supported on the
interval $I$, with the density
\[ f_{\AS}(u) = \frac{1}{\pi \sqrt{2-u^2} } \qquad \text{for $u\in I$}\]
and the \CDF
\[ K_{\Omega}(u)= F_{\AS}(u) = \frac{1}{2} + \frac{1}{\pi} \arcsin \frac{u}{\sqrt{2}}   
       \qquad \text{for $u\in I$}.\]

The formula \eqref{eq:energy-contour-integral} combined with
\eqref{eq:cauchy-for-AS} implies that if $-\sqrt{2} < u_0 < \sqrt{2}$ then the
value of $(-2) \Force_{\AS}(u_0)$ is equal to the residue of the rational
function $z\mapsto  \frac{1}{z^2-2}$ in $z=-\sqrt{2}$. In this way we get that
the interaction energy of the arcsine measure is given by the piecewise constant
function
\begin{equation}
    \label{eq:arc-sine-force}
    \Force_{\AS}(u)= 
    \begin{cases}
        0                  & \text{if $u< -\sqrt{2}$}, \\[1ex]   
        \displaystyle\frac{1}{4 \sqrt{2}} & \text{if $-\sqrt{2} < u < \sqrt{2}$}, \\[2ex]
        0                  & \text{if $u > \sqrt{2}$}.
    \end{cases}   
\end{equation}

\begin{corollary}[Schensted insertion into staircase tableaux]
	\label{corollary:staircase}
    Let $0<z_0<1$ be fixed and let 
    \[ u_0=F_{\AS}^{-1}(z_0)=  - \sqrt{2}\ \cos \pi  z_0  \]
    be the corresponding quantile of the arcsine distribution. For each integer
$n$ of the form $n=n_N$ let $T^{(n)}$ be a uniformly random Poissonized
tableau of the staircase shape $\lambda^{(n_N)}$.
    
    Then the sequence of random variables
    \[ \sqrt[4]{n} \left[ \frac{1}{\sqrt{n}} \uIns\left( T^{(n)}; z_0 \right) - u_0 \right] 
    \xrightarrow{\mathcal{D}} N\left( 0, \sigma_{\AS}^2(u_0) \right) \]
    converges in distribution to the centered Gaussian measure with the variance
    \[ \sigma_{\AS}^2(u_0) = \frac{\pi^2}{4 \sqrt{2}}  \left(2-u_0^2\right). \]
\end{corollary}
\begin{proof}
	We will verify that the assumptions of \cref{thm:CLT-shape} are satisfied.
	The relationship between $u_0$ and $z_0$ is precisely as required in
	\assum{item:transition-measure-omega-is-absolutely-continuous}.
	\assum{item:A2} was already verified above. 
	
For any numbers $a$ and $b$ with $-\sqrt{2} < a < u_0 < b < \sqrt{2}$,
the density $f_{\AS}$ restricted to $[a,b]$ is continuous and strictly 
positive. Moreover, the restriction of $\Omega$ to $[a,b]$ is $0$-Lipschitz 
(hence a contraction), satisfying Assumption~\ref{item:A4}.

The result follows from \cref{thm:CLT-shape}, with the variance computed 
from the interaction energy \eqref{eq:arc-sine-force} and the density of 
the arcsine distribution.
\end{proof}

We will revisit this example in \cref{sec:insertion-staircase} and
\cref{sec:staircase3}.

\subsubsection{Almost isosceles triangle tableaux}
\label{sec:almost-isosceles}

We revisit and generalize the above example by defining the auxiliary functions
\begin{align*}
	Y(x) &= \max\left( \sqrt{2}-x, \ 0 \right),  \\
	X(y) &= \max\left( \sqrt{2}-y, \ 0 \right) 
\end{align*}
which provide the parametrization of the \conti diagram $\Omega$ in the
French coordinate system $(x,y)$ and its transpose $(y,x)$, respectively.
 
Unlike the previous case, there is no need to restrict
to a subsequence. Consider any sequence of (deterministic or random) Young
diagrams $\big( \lambda^{(n)} \big)$ such that the rescaled rows converge to the
shape specified by the isosceles triangle diagram at the specific rate:
\begin{align}  
\label{eq:convergence-rows}	
\max_{i\geq 1}  \frac{\log n}{\sqrt[4]{n}}  
\left|  \lambda^{(n)}_i   - 
\sqrt{n}\ X\left( \frac{i}{\sqrt{n} } \right)   
\right| 
& \xrightarrow[n\to\infty]{P} 0,  \\
\intertext{and similarly for the columns:}
\label{eq:convergence-columns}
\max_{i\geq 1}  \frac{\log n}{\sqrt[4]{n}}  
\left|  \big(\lambda^{(n)}\big)'_i   - 
\sqrt{n}\ Y\left( \frac{i}{\sqrt{n} } \right)   
\right| 
& \xrightarrow[n\to\infty]{P} 0.  
\end{align}

It is straightforward to verify that these assumptions ensure 
condition \ref{item:A2} is satisfied, making \cref{thm:CLT-shape} 
applicable. Fortunately, all calculations for the \conti isosceles triangle 
diagram $\Omega$---including the transition measure, its CDF, and the interaction 
energy---were already completed in \cref{sec:staircase} and remain valid 
here.

\begin{corollary}[Schensted insertion into almost isosceles triangle tableaux]
	\label{coro:staircase-more-general}
	Under the above assumptions, \cref{corollary:staircase} remains valid 
	when $T^{(n)}$ is a uniformly random Poissonized tableau of shape
	$\lambda^{(n)}$.
\end{corollary}

This result demonstrates the power of \cref{thm:CLT-shape}: unlike the 
staircase case, we do not require Young diagrams to have a very specific 
shape. The corollary applies to any diagram sequence satisfying the 
convergence conditions above.

For comparison, staircase diagrams enjoy explicit transition measure 
formulas \cite{Sniady2024modulus}, making results like 
\cref{thm:CLT-trans} directly applicable. For the broader class of diagrams studied 
here, such explicit calculations are not available---\cref{thm:CLT-trans} 
cannot be applied directly---highlighting the significance of 
\cref{thm:CLT-shape}.

\subsubsection{More general almost triangle tableaux}

We extend the example from \cref{sec:almost-isosceles} even more.
We fix real parameters $a$ and $b$ with $|a| < 1$ and $b > 0$, and consider 
the \conti triangle diagram
\[ \Omega(u) = \max\big( au + b, \ |u| \big). \]
Note that $a = 0$ and $b = \sqrt{2}$ recovers the \conti isosceles triangle
diagram from \cref{sec:staircase}. In what follows, we compute the Cauchy
transform, transition measure, CDF, and interaction energy for this \conti
diagram. The calculations follow those in \cref{sec:staircase} closely, though
the results are less elegant. Since the methods are analogous, we provide
abbreviated explanations.

\paragraph{Critical points and Cauchy transform}

The critical points of the triangle diagram $\Omega$, which delineate the different 
linear segments, are
\[ u_1 = -\frac{b}{1+a}, \qquad\qquad u_2 = \frac{b}{1-a}. \]

The Cauchy transform of $\Omega$ takes the form 
\[ \cauchy(z) = 
(z - u_1)^{-\frac{1+a}{2}} (z - u_2)^{-\frac{1-a}{2}} 
\quad \text{for } z \in \CC \setminus I, \]
where $I = [u_1, u_2]$.

\paragraph{Transition measure and CDF}
The corresponding transition measure $\mu_\Omega$ is a scaled and centered
beta distribution with shape parameters
\[ \alpha = \frac{1-a}{2}, \qquad\qquad \beta = \frac{1+a}{2}, \]
supported on the interval $I$. Its probability density function is given by 
\[ f(u) = \frac{1}{\pi} \cos \frac{a\pi}{2} \ (u - u_1)^{-\frac{1+a}{2}}\ 
(u_2 - u)^{-\frac{1-a}{2}} \qquad \text{for } u \in I. \]
The corresponding cumulative distribution function $K_{\Omega}$ can be 
expressed explicitly in terms of the regularized incomplete beta function:
\[ K_{\Omega}(u) = 
I_{\frac{u - u_1}{u_2 - u_1}}\left( \frac{1-a}{2}, 
\frac{1+a}{2} \right) \qquad \text{for } u \in I. \]

\paragraph{Interaction energy}
Computing the interaction energy via \eqref{eq:energy-contour-integral} is 
more challenging since it involves a contour integral of the function
\[ \big[ \cauchy(z) \big]^2
= (z - u_1)^{-1-a} (z - u_2)^{-1+a} \quad \text{for } z \in \CC \setminus I, \]
which is not analytic on the interval $I$ except when $a=0$. 

For $a<0$, we choose the contour $\contour$ from \cref{fig:contour} as a 
thin rectangle very close to the real axis, ultimately taking the limit 
as the rectangle's height approaches zero. The interaction energy evaluates to
\[ \Force(u) = -\frac{1}{2 \pi} \sin \pi a \cdot \int_{u_1}^{u} 
(z-u_1)^{-1-a} (u_2-z)^{-1+a} \dif z \qquad \text{for } u \in I. \]
Using the substitution $t = \frac{z-u_1}{u_2-u_1}$, this can be expressed as
\[ \Force(u) = -\frac{1}{2 \pi} \sin \pi a \cdot \frac{1}{u_2-u_1} \cdot 
B\left(\frac{u-u_1}{u_2-u_1}; -a, a\right), \]
where $B(x; p, q)$ denotes the incomplete beta function. Note that this 
expression involves a negative second parameter $a < 0$, which requires 
careful interpretation beyond the standard definition.

For $a>0$, this derivation fails due to the excessive singularity at $u_1$. 
Instead, one must use a contour that extends rightward from $u_0$ rather 
than leftward. We omit these details here.

\paragraph{Application to random almost triangle Young tableaux}
Let $\big( \lambda^{(n)} \big)$ be a random or deterministic sequence 
satisfying the convergence conditions \eqref{eq:convergence-rows}, \eqref{eq:convergence-columns} from \cref{sec:almost-isosceles}, where 
the auxiliary functions are adapted to parameterize the triangle diagram $\Omega$:
\begin{align*}
	X(y) &= \max\left( \frac{b - (1 + a)y}{1 - a}, \ 0 \right), \\
	Y(x) &= \max\left( \frac{b - (1 - a)x}{1 + a}, \ 0 \right).
\end{align*}
For a concrete example, one may take $\lambda^{(n)}$ to be the deterministic 
diagram with rows given by
\[  \lambda^{(n)}_i = \left\lfloor  \sqrt{n}\ X\left( \frac{i}{\sqrt{n}} 
\right)  \right\rfloor,  \]
which provides the most straightforward discretization of the diagram 
$\Omega$.

\cref{thm:CLT-shape} is applicable in this setting and gives the following concrete statement.

\begin{corollary}[Schensted insertion into almost triangle tableaux]
	Assume $-1<a<0$ and let $\big( \lambda^{(n)} \big)$ be a sequence satisfying 
	the above assumptions. Let $u_0 \in (u_1, u_2)$ and define
	\[ z_0 = I_{\frac{u_0 - u_1}{u_2 - u_1}}\left( \frac{1-a}{2}, 
	\frac{1+a}{2} \right) \in (0,1). \]
	Let $T^{(n)}$ be a uniformly random Poissonized tableau of shape 
	$\lambda^{(n)}$. Then the fluctuations of the newly created box around 
	the first-order approximation satisfy
	\[
	\sqrt[4]{n} \left[ \frac{\uIns( T^{(n)}; z_0 )}{\sqrt{n}} - u_0 \right]
	\xrightarrow[n\to\infty]{\mathcal{D}} N\left( 0, \sigma^2 \right),
	\]  
	where the variance is given by
	\[ \sigma^2 = -\pi \sin \frac{\pi a}{2} \cdot (u_0 - u_1)^{1+a} 
	(u_2 - u_0)^{1-a} \cdot \frac{1}{u_2-u_1} \cdot 
	B\left(\frac{u_0-u_1}{u_2-u_1}; -a, a\right). \]
\end{corollary}

\subsubsection{Where to look for more concrete examples?}

Several additional natural candidates for the \conti diagram $\Omega$ arise for which 
explicit calculations are feasible, allowing \cref{thm:CLT-shape} to take 
concrete form.

The most immediate example is the Logan--Shepp--Vershik--Kerov limit shape
$\Omega_*$ from \cref{sec:lsvk}. For sequences of Young diagrams converging 
to this \conti diagram at the appropriate rate, our framework applies 
directly. Moreover, all necessary quantities---the density of the measure, 
CDF, and interaction energy---are explicitly known. This case will be developed 
in detail later, in \cref{sec:towards-nice-d-conjecture}. While that section 
primarily addresses \cref{conj:CLT-Plancherel} for Plancherel random 
diagrams, the same methods apply to any sequence satisfying 
Assumption~\ref{item:A2}, yielding a concrete realization of our results.

Another promising family consists of the \conti diagrams introduced by Biane
\cite{Biane2001} as limit shapes arising from the RSK correspondence applied to
random sequences of letters sampled from a finite alphabet. These diagrams have
explicit parametrizations and their transition measures are explicitly given by
free Poisson distributions, making them particularly tractable for concrete
calculations with a comparable level of difficulty as above.

More generally, any \conti diagram with an explicitly known transition 
measure that avoids exotic special functions would serve as a suitable 
test case for our methods.

\subsection{Towards \cref{conj:CLT-Plancherel}, part 1}
\label{sec:towards-big-conjecture}

\subsubsection{General strategy}
\label{sec:strategy}

One might attempt to prove \cref{conj:CLT-Plancherel} by applying
\cref{thm:CLT-shape} with $\lambda^{(n)}$ being a Plancherel-distributed random
Young diagram with $n$ boxes and $\Omega=\Omega_*$ being the
Logan--Shepp--Vershik--Kerov limit shape from \cref{sec:lsvk}.

\subsubsection{Problematic \assum{item:A2}}

Regretfully, we do not know whether \assum{item:A2} is satisfied in
this context. Some naive Monte Carlo simulations reveal that for diagram sizes $n$
in the range $10^2$ to $10^6$, spanning several orders of magnitude, the mean
distance
\begin{equation}
	\label{eq:conjectured}
	 \E\ d_{XY}(\omega_{n}, \Omega_* ) \approx C n^{\alpha}
\end{equation}
between the rescaled Plancherel-distributed random \conti diagram $\omega_{n}$ and the
limit shape~$\Omega_*$ can be well approximated by a power law with the exponent
\begin{equation}
	\label{eq:alpha}
	\alpha\approx -0.32 
\end{equation}
and the multiplicative constant $C\approx 2$. For this reason we dare to state
the following conjecture which is essentially a reformulation of the condition
\ref{item:A2}. %
\begin{conjecture}
	\label{conj:nice-d}
There exists an exponent $\alpha \leq - \frac{1}{4}$ with the property that if
$\omega_n = \omega_{\frac{1}{\sqrt{\nklatki}}\lambda^{(\nklatki)}}$ is the
rescaled  profile of a Plancherel-distributed random Young diagram
$\lambda^{(n)}$ with $n$ boxes then
\[ n^{-\alpha}\ d_{XY}(\omega_{n}, \Omega_* ) \xrightarrow[n\to\infty]{P} 0. \]
\end{conjecture}

\subsubsection{Towards \cref{conj:nice-d}}
\label{sec:towards-nice-d-conjecture}

Let $X=X(y)$ denote the parametrization of the limit curve $\Omega_*$ (given by
\eqref{eq:omegastar}) in the French coordinate system. This function associates
each $y \geq 0$ with its corresponding $x$-coordinate. By convention, $X(0)=2$ so that
$X\colon [0,\infty) \to [0,2]$. By a minor adjustment of \cref{rem:dX-concrete}
it follows that the \mbox{$x$-distance} $d_X(\omega_n, \Omega_*)$ is the
supremum distance between the function $X$ and the piecewise constant function
which to the French $y$ coordinate associates the corresponding
\mbox{$x$-coordinate} of the right envelope of the rescaled diagram
$\lambda^{(n)}$. As a consequence,
\begin{equation}
	\label{eq:dX-toOmega}
	d_X(\omega_n, \Omega_*) = 
\max \left( \max_{i\geq 1} 
 \left| \frac{1}{\sqrt{n}} \lambda^{(n)}_i - X\left(\frac{i-1}{\sqrt{n}} \right)  \right|, 
 \quad
 \max_{i\geq 1} 
  \left| \frac{1}{\sqrt{n}} \lambda^{(n)}_i - X\left(\frac{i}{\sqrt{n}} \right)  \right|
  \right).
\end{equation}

The results of Baik, Deift and Johansson \cite{BaikDeift1999} and
Okounkov \cite{Okounkov2000} imply that each finite family of (shifted and rescaled)
lengths of the bottom rows
\[ n^{-\frac{1}{6}} \left( \lambda^{(n)}_i - 2 \sqrt{n} \right) =
n^{\frac{1}{3}} \left( \frac{1}{\sqrt{n}} \lambda^{(n)}_i - 2  \right)
 \]
indexed by $i\in \{1,\dots,i_{\max}\}$ converges in distribution to a specific
non-degenerate probability distribution (which happens to be related to the
largest $i_{\max}$ eigenvalues of a large GUE random matrix). As a consequence,
each individual component of the iterated maximum \eqref{eq:dX-toOmega} is a
random variable which asymptotically is of order \emph{at least}
$n^{-\frac{1}{3}}$. By the symmetry of the problem, an analogous analysis can be
performed for the $y$-distance $d_Y(\omega_n, \Omega_*)$ which gives a lower
bound $\alpha\geq - \frac{1}{3}$ for the exponent in the conjectural asymptotic
formula \eqref{eq:conjectured}. It also shows that \cref{conj:nice-d} cannot be
true for $\alpha \leq  - \frac{1}{3}$.
 
Interestingly, this theoretical lower bound  $\alpha\geq - \frac{1}{3}$ is very
close to the value \eqref{eq:alpha} based on the Monte Carlo experiments. This
could be an indication that the \emph{`main contribution'} to the maximum in
\eqref{eq:dX-toOmega} comes from a \emph{`small'} (finite, or growing very
slowly with $n$) number of components there which correspond to the bottom rows
of the Young diagram. Monte Carlo experiments related to Plancherel-distributed
random Young diagrams with $n=10^6$ boxes give some support to this heuristic
statement. Regretfully, any rigorous results related to \cref{conj:nice-d} with
$\alpha\approx -\frac{1}{3}$ are currently beyond our reach.

\smallskip

The seminal results of Logan and Shepp \cite{LoganShepp1977} and Vershik and
Kerov \cite{VershikKerov1977} can be interpreted as a rudimentary version of
\cref{conj:nice-d} with exponent $\alpha=0$. To explore potential improvements,
we revisit their original proofs.

Using Romik's notation \cite{Romik2015}, we aim to refine
\cite[Theorem~1.26]{Romik2015} by replacing the fixed positive constant
$\epsilon>0$ with a sequence $(\epsilon_n)$ defined as:
\[ \epsilon_n = n^{\alpha} \]
where $\alpha<0$. Our goal is to maintain the theorem's conclusion (that ``the
probability of a specific event converges to 1'') while minimizing $\alpha$.
This modification must be propagated through the entire supply chain of
supporting proofs.

However, a critical component of this chain, \cite[Theorem~1.20]{Romik2015},
relies on the condition:
\[ - \epsilon_n^2 n + C \sqrt{n} + O\left( \log n \ \sqrt{n} \right) 
\xrightarrow[n\to\infty]{} -\infty \]
for some constant $C>0$. This condition holds only if $\epsilon_n$ does not
decay too rapidly, specifically when $\alpha > -\frac{1}{4}$.

Consequently, we can prove that \cref{conj:nice-d} holds for $\alpha >
-\frac{1}{4}$. While this represents an improvement over the original result, it
falls short of our target $\alpha = -\frac{1}{4}$, highlighting the limitations
of this approach.

\smallskip

We shall discuss some alternative versions of the \assum{item:A2} as
well as some other paths towards \cref{conj:jdt} later in
\cref{sec:towards-conj-jdt-part-2}.

\subsubsection{Transition measure for the limit shape and its interaction energy}

Our quest towards \cref{conj:CLT-Plancherel} via the strategy outlined in
\cref{sec:strategy} involves also identification of the transition
measure~$\mu_{\Omega_*}$ of the limit shape $\Omega_*$. By the result of Kerov
\cite{Kerov1993-transition} it is the semicircle measure $\mu_{\SC}$ with the density
$f_{\SC}$ given by \eqref{eq:SCdensity}; it is no coincidence that this measure
already appeared in \cref{sec:semicircle}. The following result provides an
explicit formula for its interaction energy.

\begin{proposition}
	\label{prop:semicircle-interaction-energy}
    For the semicircle measure $\mu_{\SC}$ the corresponding interaction energy is given by
    \[
    \Force_{\SC}(u)= 
    \begin{cases}
        \displaystyle \frac{1}{12 \pi} \left(4-u^2 \right)^{\frac{3}{2}} & \text{for $-2\leq u\leq 2$}, \\[2ex]
        0 & \text{otherwise.}
    \end{cases}
    \]    
\end{proposition}
\begin{proof}
    The method of the contour integral considered in \cref{sec:may-force-be-with-you}
    is applicable; we leave the details for the interested reader. In the following
    we provide an alternative method.

    For $\epsilon\in\R$ we define the regularized version of
    $\Force_{\SC}(u)$ given by
    \[
    \Force_\epsilon(u):= \iint_{-2 < z_1 < u < z_2 < 2} \Re \frac{1}{z_2-z_1+i \epsilon} 
    \ f_{\SC}(z_1)\ f_{\SC}(z_2)
    \dif z_1 \dif z_2;\]
    our goal is to evaluate $\Force_0(u)=\Force_{\SC}(u)$.
    Note that for real numbers $z_2 > z_1$
    \[ \R_+\ni \epsilon \mapsto \Re \frac{1}{z_2-z_1+ i \epsilon} = \frac{z_2-z_1}{(z_2-z_1)^2+\epsilon^2} \]     
    is a decreasing, positive, continuous function. 
    By Lebesgue monotone convergence theorem it follows that
    \[ F_0(u) = \lim_{\epsilon\to 0}  F_\epsilon(u). \]

    In order to evaluate $F_\epsilon(u)$ we notice that for $\epsilon>0$ the
derivative of $\Force_\epsilon$ is given by
    \begin{multline*} \Force_\epsilon'(u) = \\
        \int_{u<z_2<2} \Re \frac{1}{z_2-u+i \epsilon}\ f_{\SC}(u)\ f_{\SC}(z_2) \dif z_2 - 
        \int_{-2<z_1<u} \Re \frac{1}{u-z_1+i \epsilon}\ f_{\SC}(z_1)\ f_{\SC}(u) \dif z_1
        = \\
        f_{\SC}(u)\ \Re \int_{-2<z<2}   \frac{1}{z-(u+i \epsilon)} \ f_{\SC}(z) \dif z
        =  f_{\SC}(u) \ \Re \cauchy_{SC}(u+i \epsilon);
    \end{multline*}
    above we used the fact that the integral on the right-hand side is the Cauchy
    transform of the semicircular distribution
    \[ \cauchy_{\SC}(w) = \int_{-2<z<2}   \frac{1}{z-w} \ f_{\SC}(z) \dif z = 
    \frac{w - \sqrt{w^2-4}}{2},\] 
    evaluated at $w=i+i\epsilon$, see \cite[Section 3.1]{MingoSpeicher2017}.

    For $-2\leq u \leq 2$ the principal value of the Cauchy integral is given by
    \[ \cauchy_{\SC}(u)= \lim_{\epsilon\to 0} \Re \cauchy_{\SC}(u+i \epsilon) = - \frac{u}{2} \]
    and the convergence is uniform over the interval $[-2,2]$. 
    We proved in this way that
    \begin{equation}
        \label{eq:GB}
        \Force_{\SC}'(u) = f_{\SC}(u)\ \cauchy_{\SC}(u).
    \end{equation}    
    It follows that
    \[
    \Force_{\SC}(u)= \int_{-2}^u f_{\SC}(w) \frac{-w}{2} \dif w=
    \frac{1}{12 \pi} \left(4-u^2 \right)^{\frac{3}{2}} \qquad \text{for $-2\leq u\leq 2$}. 
    \]
    
    The above calculation can be generalized to other measures than the
semicircular law for which the Cauchy transform is sufficiently regular. Be
warned, that for the example of the arc-sine law from \cref{sec:staircase}
the analogue of \eqref{eq:GB} implies that
    \[ \Force_{\AS}'(u)= 0 \]
    which, technically speaking, is correct but not very helpful for finding the
    exact form of the formula \eqref{eq:arc-sine-force}.
\end{proof}

\subsubsection{Conclusion}

One can now easily verify that, apart from the condition \ref{item:A2}, the
remaining assumptions of \cref{thm:CLT-shape} are fulfilled for the setup
outlined in \cref{sec:strategy}.
Our choice of the variance \eqref{eq:varianceCLT} in \cref{conj:CLT-Plancherel}
is equal to $\frac{\Force_{\SC}}{(f_{\SC})^2}$ and it was based on the the variance
on the right-hand side of \eqref{eq:my-favorite-rv}.

\section{A version of the main theorem with weaker assumptions}
\label{sec:main-thm-via-transmeasure}

This section focuses on \cref{thm:CLT-trans}, which serves as both a stronger
version of the main theorem (\cref{thm:CLT-shape}) and an essential intermediate
step in its proof. We begin by discussing the motivations behind this result.

\subsection{Motivations}

\Assumptions \ref{item:A2}, \ref{item:A4} of the main result
(\cref{thm:CLT-shape}) were formulated in the language that involves the
\emph{`shape'} of the \conti diagrams $(\omega_n)$ as well as the \emph{`shape'}
of their limit $\Omega$. This language provides a very convenient intuitive
meaning to the assumptions, but it also has some disadvantages which we discuss
below.

\subsubsection{Tension between local and global} 

Firstly, \assum{item:A2} may be overly restrictive for certain
applications, as discussed in \cref{sec:towards-big-conjecture}. It would be
advantageous to replace it with a weaker condition. The key issue is that the
distance $d_{XY}(\omega_n, \Omega)$ can be viewed as a form of the supremum
distance, which is influenced by the \emph{‘worst behaving’} part of the \conti
diagram $\omega_n$. This influence persists even if the problematic part
constitutes a relatively \emph{‘small’} fraction of the Young diagram profile or
is located \emph{‘far away’} from where the new box is expected to appear. This
phenomenon is evident in Plancherel-distributed random Young diagrams, where
fluctuations at the edge are significantly larger than those in the bulk.

For this reason, it would be desirable to replace \ref{item:A2} by some
assumptions that fall into the following two types.
\begin{itemize}
\item The assumptions of the first type should be a quite restrictive about the
\emph{`local fluctuations'} of the \conti diagrams $(\omega_n)$ in the
neighborhood of the point $u_0$ where we expect the new box to appear. 

\item The assumptions of the second type should provide quite rough information
about the overall global shape of the diagrams $(\omega_n)$.
\end{itemize}

As we shall see, each assumption of \cref{thm:CLT-trans} is
expressed in terms of the transition measure of $\omega_n$. One can argue that
each of them is a mixture of these two types. Indeed, on one hand, each of these
assumptions is very sensitive to the local behavior of the transition measure of
$\omega_n$ (which itself is most sensitive to the local behavior of the \conti
diagram $\omega_n$). On the other hand, the transition measure is defined in a
very \emph{`non-local'} way so that its behavior around $u_0$ is influenced also
by very \emph{`distant'} areas of $\omega_n$.

We believe that the assumptions of \cref{thm:CLT-trans} strike the right
balance between the local and the global conditions that allows to apply the
main result in a wider class of examples.

\subsubsection{Transition measure as the language of the proof}

Secondly, the vast majority of the proof of \cref{thm:CLT-shape} is formulated
purely in the language of the transition measures of the \conti diagrams
$(\omega_n)$ and not in the language that involves their shapes. In fact, the
first step of the proof (\cref{prop:implies}) is to translate the assumptions of
\cref{thm:CLT-shape} to the language of the transition measures and to never
look back again.

\subsection{Notations}
\label{sec:notations}

If $\mu$ is a probability measure on the real line, $u_0\in\R$ and $\epsilon\geq
0$ we define \emph{the regularized interaction energy}
\[
	\ForceRegularized{\epsilon}{\mu}(u_0) = 
	\iint\limits_{\{(z_1,z_2) :\  z_1 \leq u_0 < z_2 \}}
	\frac{1}{z_2-z_1 + \epsilon} \dif \mu(z_1) \dif \mu(z_2).
\]
Note that for $\epsilon=0$ we recover the usual interaction energy which was
introduced in \cref{sec:may-force-be-with-you}.

For a probability measure $\mu$ on the real line we define 
\begin{equation}
	\label{eq:cauchy-abs}
	 \cauchy^+_\mu(z) = \int_{\R} \frac{1}{|z-x|+1} \dif \mu(x) 
\end{equation}
as a (slightly regularized) version of the Cauchy transform with the absolute
value of the original kernel. For a Young diagram $\lambda$ we will use the
simplified notation
\[\cauchy^+_\lambda= \cauchy^+_{\mu_\lambda}. \]

Recall that $K_\lambda$ is the cumulative function of the diagram $\lambda$, see
\cref{label:CDFkerov}.

\subsection{The main theorem, alternative version}

\begin{theorem}[The main theorem with alternative, weaker assumptions]
	\label{thm:CLT-trans}

	Let $u_0\in\R$, and $0<z_0<1$,  and $f_0>0$, and $\Force_0>0$ be fixed.     
	For each integer $\nklatki \geq 1$ let
	$\lambda^{(\nklatki)}$ be a random Young diagram. We denote by 
	\[ \omega_n= \omega_{\frac{1}{\sqrt{n}} \lambda^{(n)} } \]
	the corresponding rescaled random \conti diagram.

	Let $\zmienna$ be an standard Gaussian variable which is independent from
the random diagrams $(\lambda^{(n)})$. We denote
\begin{equation}
	\label{eq:uwithnoise}
	 \uwithnoise = u_0 + \frac{1}{\sqrt[4]{n}} \zmienna. 
\end{equation}

	We assume that:
	\begin{enumerate}[label=(B\arabic*)]
		\item  \label{item:local2}
		\emph{(`the cumulative functions can be approximated locally by an affine function')} 
		\[ \sqrt[4]{\nklatki} \Big[
		K_{\omega_n} (\uwithnoise)
		-  z_0 - f_0 \cdot \left(  \uwithnoise - u_0 \right)   \Big]
		\xrightarrow[n\to\infty]{P} 0, \]

\item \label{item:energy2} 
\emph{(`the regularized interaction energy can be
	locally approximated by $\Force_0$')}
\[
\ForceRegularized{\frac{1}{\sqrt{n}}}{\mu_{\omega_n}}(\uwithnoise)
\xrightarrow[n\to\infty]{P} \Force_0,
\]

		\item 
		\label{item:regular2}
		\label{item:last-assumption}
		\emph{(`the integrand within Cauchy transform is not too singular')}
		\begin{equation}
			\label{eq:regular2}
			\nklatki^{\frac{3}{8}}  \cauchy^+_{\lambda^{(n)}}(\sqrt{n} \ \uwithnoise)  
								\xrightarrow[n\to\infty]{P} 0.
		\end{equation}
	\end{enumerate}

	\medskip

		Let $T^{(\nklatki)}$ be a uniformly random Poissonized tableau of shape $\lambda^{(\nklatki)}$.
	Then the fluctuations of the newly created box around the first-order approximation
	\begin{equation}\label{eq:my-favorite-rv2}
		\sqrt[4]{\nklatki} \left[ \frac{\uIns( T^{(\nklatki)}; z_0 )}{\sqrt{\nklatki}} -u_0 \right]
		\xrightarrow[\nklatki\to\infty]{\mathcal{D}} N\left( 0, \frac{\Force_0}{f_0^2} \right)
	\end{equation}    
	converge to a centered Gaussian measure with the variance
	$\frac{\Force_0}{f_0^2}$.

\end{theorem}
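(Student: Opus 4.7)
My approach is a conditional CLT: first describe the law of $\uIns(T^{(n)}; z_0)$ given $\lambda^{(n)}$ purely in terms of $\mu_{\omega_n}$, and then use the hypotheses \ref{item:local2}--\ref{item:regular2} to extract the Gaussian limit. The starting point is the standard correspondence between uniformly random Poissonized tableaux and Kerov's transition measure on the growing shape: conditionally on $\lambda^{(n)}$, I expect the height variable $K_{\omega_n}\!\bigl(\uIns(T^{(n)}; z_0)/\sqrt{n}\bigr)$ to concentrate at $z_0$ with fluctuations of order $n^{-1/4}$.

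With such a representation in hand, assumption \ref{item:local2} allows one to invert $K_{\omega_n}$ on the $n^{-1/4}$-scale around $u_0$: the affine approximation $K_{\omega_n}(u) \approx z_0 + f_0\,(u - u_0)$ converts fluctuations of the height into fluctuations of the position, contributing the factor $1/f_0$ that produces the $1/f_0^2$ in the variance. Modulo $o_P(1)$-errors this gives
\[ \sqrt[4]{n}\!\left[\frac{\uIns(T^{(n)}; z_0)}{\sqrt{n}} - u_0\right] \;=\; \frac{1}{f_0}\,\sqrt[4]{n}\,\Bigl( z_0 - K_{\omega_n}\!\bigl(\uIns(T^{(n)}; z_0)/\sqrt{n}\bigr) \Bigr) + o_P(1), \]
reducing the problem to showing that the right-hand side (times $f_0$) converges in distribution to $N(0, \Force_0)$.

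For the identification of the limit variance, assumption \ref{item:energy2} is the key: the regularized interaction energy $\ForceRegularized{1/\sqrt{n}}{\mu_{\omega_n}}(\uwithnoise)$ should equal, in the limit, the conditional variance of the height fluctuation. The regularization at scale $n^{-1/2}$ matches the lattice spacing of $\mu_{\lambda^{(n)}}$, while the evaluation at the perturbed point $\uwithnoise$ arises naturally from the smoothing of the sampling procedure. Assumption \ref{item:regular2} plays a purely technical role: the bound on $n^{3/8}\,\cauchy^+_{\lambda^{(n)}}$ rules out anomalous contributions from the singular part of the Cauchy kernel near the edge of $\lambda^{(n)}$, at an intermediate scale $n^{-3/8}$ between the noise scale $n^{-1/4}$ and the lattice scale $n^{-1/2}$. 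The independent Gaussian $\zmienna$ acts throughout as a smoothing device: it lets the assumptions be stated at a random perturbation of $u_0$ rather than deterministically pointwise, which permits one to convert the in-probability hypotheses into distributional convergence via a Fourier/density argument.

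The main obstacle, as I see it, is the quantitative identification of $\Force_0$ as precisely the conditional variance: one must extract the regularized interaction energy as the exact second moment of the height fluctuation in the limit, starting from the combinatorial description of the Poissonized growth process. Once this second-moment identity is secured, the asymptotic Gaussianity should follow from a martingale CLT applied to the growth process, or equivalently from a moment method that identifies the higher cumulants of $\mu_{\omega_n}$ as vanishing on the $n^{-1/4}$-scale.
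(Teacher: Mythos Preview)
The paper does not actually prove \cref{thm:CLT-trans} in the material supplied; immediately after the statement it says ``The proof is postponed to \cref{sec:proof-thm-CLT-general},'' and that section is not included. So a direct comparison with the paper's argument is impossible here, and I can only assess your plan on its own terms.

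The outline is reasonable in spirit, but there is a circularity in your use of \ref{item:local2}. That hypothesis gives the affine approximation of $K_{\omega_n}$ only at the \emph{externally randomized} point $\uwithnoise=u_0+n^{-1/4}\zmienna$, with $\zmienna$ independent of $(\lambda^{(n)},T^{(n)})$. Your displayed identity, however, applies the approximation at $\uIns(T^{(n)};z_0)/\sqrt{n}$, a data-dependent point. To justify that substitution you would first need tightness of $\sqrt[4]{n}\bigl(\uIns/\sqrt{n}-u_0\bigr)$, which is half of the conclusion. Your remark that $\zmienna$ ``acts as a smoothing device \ldots\ via a Fourier/density argument'' hints at a possible workaround (convolve with the $n^{-1/4}$-Gaussian and compare characteristic functions), but as written it does not explain how one passes from evaluation at $\uwithnoise$ to evaluation at the insertion point.

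The second gap you already flag yourself: you do not supply a mechanism by which $\Force_0$ emerges as the exact limiting conditional variance. Invoking ``martingale CLT'' or ``moment method'' is not enough; one needs a concrete second-moment (or characteristic-function) computation for the height fluctuation, expressed through $\mu_{\omega_n}$, that reproduces $\ForceRegularized{1/\sqrt{n}}{\mu_{\omega_n}}(\uwithnoise)$ and explains why precisely the regularization scale $n^{-1/2}$ and the randomized evaluation point $\uwithnoise$ appear. Similarly, the role of \ref{item:regular2}---why the exponent $3/8$, and exactly which error term it kills---is left unspecified. Until those computations are written down, this is a plausible program rather than a proof.
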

The proof is postponed to \cref{sec:proof-thm-CLT-general}.

\begin{remark}
	\label{rem:spreading}

\cref{thm:CLT-trans} makes \assumptions
\ref{item:local2}--\ref{item:last-assumption} in terms of the random variable
$\zmienna$. A reader might initially assume that these assumptions hold true for
all possible values of $\zmienna\in\R$. However, it is crucial to understand
that the theorem's assumptions are much weaker than that. The key point is that
the \assumptions \ref{item:local2}--\ref{item:last-assumption} need not hold for
every single value of $\zmienna$. Instead, the theorem allows for the existence
of \emph{`bad'} values for which these assumptions may not be satisfied. By
considering $\zmienna$ as a random variable and requiring convergence in
probability, the theorem provides a convenient framework to state that such
`bad' values do not dominate the overall behavior.
\end{remark}

\begin{remark}
The reader may question the assumption that the random variable $\zmienna$
follows the Gaussian distribution. It is
important to note that there is flexibility in this assumption, and the
probability distribution of $\zmienna$ can be modified. The key requirement is
that the distribution of $\zmienna$ is absolutely continuous, with a density
that is strictly positive over the entire real line. If the result holds true
for one probability distribution satisfying this property, it will hold for any
other distribution with the same property. The details of this generalization
are left for the reader to explore. 

However, the proof of the implication from \cref{thm:CLT-shape} to
\cref{thm:CLT-trans} is more straightforward 
if the density converges to zero in $\pm\infty$.
This property motivated the choice of the Gaussian distribution.
\end{remark}

\subsection{Towards \cref{conj:jdt}, part 2}
\label{sec:towards-conj-jdt-part-2}

Building upon our discussion in \cref{sec:towards-big-conjecture}, we now
revisit our goal of proving \cref{conj:jdt}. Our previous attempt relied on
\cref{thm:CLT-shape}. In this section, we present a refined approach using
\cref{thm:CLT-trans}, which offers the advantage of weaker assumptions.

\cref{thm:CLT-trans} presents two key assumptions, among which assumption
\ref{item:local2} emerges as the most challenging. This critical condition
requires that the \CDF of the transition measure for a random
Plancherel-distributed Young diagram exhibits fluctuations within a prescribed,
sufficiently small rate.

Our investigation into this condition has yielded promising insights. Monte
Carlo experiments, as illustrated by the red thin curves in
\cref{fig:CDF-100,fig:CDF-10000}, suggest that this condition is indeed
satisfied. Furthermore, analogous results in random matrix theory, particularly
those of Gustavsson \cite{Gustavsson}, provide additional support for this
hypothesis. Based on these observations, we propose a stronger conjecture:
\begin{conjecture}
	\label{conj:transition}
	Let $\lambda^{(n)}$ be a random Young diagram with $n$ boxes distributed 
	according to the Plancherel measure. For each $-2< u <2$
	and each $\alpha<\frac{1}{2}$
	\[ n^{\alpha} \left[ 
	                      K_{\lambda^{(n)}} \left( \sqrt{n}\ u \right) - F_{\SC}(u)  
	              \right] 
	\xrightarrow[n\to\infty]{P} 0.\]
\end{conjecture}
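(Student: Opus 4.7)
The plan is to exploit the deep correspondence between Plancherel-distributed Young diagrams and determinantal point processes, and to transport the bulk rigidity and CLT technology developed for such ensembles (notably Gustavsson's theorem) to the present setting. Note that the heuristic fluctuation $\sqrt{\log n}/n$ borrowed from the GUE analogy already vastly exceeds the conjecture's requirement that the error be $o(n^{-\alpha})$ for every $\alpha<\tfrac{1}{2}$, so the conjecture should be accessible with any approach that delivers the correct scaling, even a suboptimal one.

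First I would recall Kerov's description of the transition measure: $\mu_\lambda$ is supported on the local minima $x_1<\cdots<x_k$ of the (Russian-convention) profile of $\lambda$, with Cauchy transform $G_{\mu_\lambda}(z)=\prod_{j=1}^{k-1}(z-y_j)\big/\prod_{i=1}^k(z-x_i)$, where $y_1<\cdots<y_{k-1}$ are the local maxima. Consequently, $K_{\lambda^{(n)}}(\sqrt{n}\,u)$ is the sum of the weights $\mu_\lambda(x_i)$ over $x_i\le\sqrt{n}\,u$, each weight being a rational function of the interlacing sequences $(x_i)$ and $(y_j)$. Next, by Borodin--Okounkov--Olshanski (and Johansson), the Poissonised Plancherel measure is a determinantal point process on the shifted half-integer lattice with the discrete Bessel kernel, converging in any bulk location $u\in(-2,2)$ to the sine kernel after the $\sqrt{n}$-rescaling, and from it one can read off both $(x_i)$ and $(y_j)$. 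Following the Gustavsson strategy, the counting statistic $\#\{x_i\le\sqrt{n}\,u\}$ should have Gaussian fluctuations of variance of order $\log n$, and each individual scaled minimum $x_i/\sqrt{n}$ should fluctuate by $O(\sqrt{\log n}/n)$ around its semicircle quantile. Combined with the concentration of the weights $\mu_\lambda(x_i)$ around their semicircle value $(n\,\rho_{\SC}(x_i/\sqrt{n}))^{-1}$, this yields $K_{\lambda^{(n)}}(\sqrt{n}\,u)-F_{\SC}(u)=O_P(\sqrt{\log n}/n)$, implying the conjecture. A standard Johansson-type depoissonisation then transfers the estimate to the fixed-$n$ ensemble.

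The hardest step is the weight-concentration, because $\mu_\lambda(x_i)$ is \emph{not} a linear statistic of the underlying determinantal process --- it is a multiplicative function of \emph{all} the minima and maxima, so local fluctuations could in principle cascade through the ratio formula. A rigorous argument would therefore demand optimal-scale rigidity estimates for the joint positions of minima and maxima, in the spirit of the Erd\H{o}s--Yau rigidity for GUE. A perhaps cleaner route goes through the Cauchy transform: prove a local law
\[ G_{\mu_{\lambda^{(n)}}}\bigl(\sqrt{n}\,u+i\eta\bigr) = G_{\SC}\bigl(u+i\eta/\sqrt{n}\bigr) + o\bigl(\tfrac{1}{n\eta}\bigr) \]
down to spectral scales $\eta$ slightly above $\sqrt{\log n}/n$, and then recover the CDF estimate by a Helffer--Sj\"ostrand functional-calculus argument. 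As a possibly viable alternative, since the rate demanded by the conjecture is much weaker than the heuristic, the Ivanov--Olshanski (Kerov) CLT for polynomial observables of $\mu_{\lambda^{(n)}}$ --- which gives moment fluctuations of order $1/\sqrt{n}$ --- might already suffice when combined with a careful Chebyshev-polynomial approximation of the indicator $\mathbf{1}_{(-\infty,\sqrt{n}u]}$.
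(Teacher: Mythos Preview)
The statement you are attempting to prove is labeled as a \emph{conjecture} in the paper, and the authors explicitly write that it ``remains beyond our current reach.'' There is therefore no proof in the paper to compare your proposal against; the paper offers only supporting evidence (Monte Carlo experiments and the analogy with Gustavsson's GUE result, which you also invoke).

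Your sketch is a reasonable strategic outline rather than a proof, and you correctly identify the essential obstacle yourself: the transition-measure weights $\mu_\lambda(x_i)$ are highly non-local multiplicative functions of the entire interlacing sequence, so neither bulk rigidity of individual minima nor sine-kernel convergence immediately controls them. The local-law route would require such a law for the Cauchy transform of the \emph{transition measure} (not the empirical measure of particles) down to mesoscopic scales, and no such result is currently available for Plancherel. The Ivanov--Olshanski alternative has its own gap: Kerov's CLT applies to polynomial observables of \emph{fixed} degree, whereas approximating $\mathbf{1}_{(-\infty,u]}$ to accuracy $o(n^{-\alpha})$ by Chebyshev polynomials forces the degree to grow with $n$, and one must then simultaneously control the approximation error near the jump and the joint fluctuations of a growing family of moments --- neither of which follows from the existing theorem. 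In short, your proposal accurately surveys the natural lines of attack and their difficulties, which is precisely why the statement remains open.
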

This conjecture encompasses and extends beyond the requirements of 
\assum{item:local2}. The case $\alpha=0$ corresponds to the established weak
convergence of probability measures \cite[Theorem~3.2]{RomikSniady2015}, while
\assum{item:local2} aligns with the case $\alpha=\frac{1}{4}$.

While \cref{conj:transition} remains beyond our current reach, it offers a
tantalizing glimpse into the deeper structure of Plancherel-distributed Young
diagrams and their transition measures.

\section{Implication between two versions of the main theorem}

\label{sec:old-version-implies-new-version}

Probably the best way of giving an intuitive meaning of the assumptions of
\cref{thm:CLT-trans} is to establish a link to the main result
(\cref{thm:CLT-shape}). This link is provided by the following proposition,
which also serves as an intermediate step for the proof of the main result.

\begin{proposition}
	\label{prop:implies}
	\cref{thm:CLT-trans} implies \cref{thm:CLT-shape}.
\end{proposition}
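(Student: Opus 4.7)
The plan is to verify that the three transition-measure hypotheses \ref{item:local2}, \ref{item:energy2}, \ref{item:last-assumption} of \cref{thm:CLT-trans} follow from the shape-based hypotheses of \cref{thm:CLT-shape}; since the two theorems have the same conclusion, the implication will then be immediate. Throughout, the independent Gaussian variable $\zmienna$ plays the role of a smoothing device (cf.\ \cref{rem:spreading}): its absolutely continuous law lets us convert almost-everywhere statements into in-probability statements via Fubini-type arguments, so we do not need the shape-based input to control pointwise pathologies of $K_{\omega_n}$ at $u_0$ itself.

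For \ref{item:local2}, shape convergence $\omega_n \to \Omega$ in the $d_{XY}$ metric (\assum{item:A2}) yields uniform convergence of the cumulative functions $K_{\omega_n} \to K_\Omega$, while the regularity of $\Omega$ at $u_0$ (\assum{item:A4}) identifies $z_0 = K_\Omega(u_0)$ and $f_0$ with the local density of $\mu_\Omega$ at $u_0$, so that $K_\Omega(u) = z_0 + f_0 (u-u_0) + o(u-u_0)$ near $u_0$. Evaluating at $\uwithnoise = u_0 + n^{-1/4}\zmienna$ and multiplying by $n^{1/4}$ produces the desired small residual once one combines the rate inherent in the shape convergence, the local regularity of $\Omega$, and the Gaussian smoothing by $\zmienna$, which controls the probability that $\uwithnoise$ falls in the bad set on which $K_{\omega_n}$ deviates from its affine model.

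The verification of \ref{item:energy2} uses the fact that, for each fixed $\epsilon>0$, the map $\mu \mapsto \ForceRegularized{\epsilon}{\mu}(u)$ is a bounded continuous functional on probability measures in the weak topology; combining this with the weak convergence $\mu_{\omega_n} \to \mu_\Omega$ provided by shape convergence, with the local regularity of $\mu_\Omega$ near $u_0$ required to send $\epsilon = n^{-1/2} \to 0$, and with the spreading by $\zmienna$, delivers the claim. Condition \ref{item:last-assumption} is then reduced, via the change of variable $x = \sqrt n y$ inside the defining integral \eqref{eq:cauchy-abs}, to controlling an integral of a kernel of width $n^{-1/2}$ against $\mu_{\omega_n}$; the boundedness of the local density of $\mu_\Omega$ near $u_0$ together with a second application of the Gaussian spreading yields a bound comfortably better than the required $n^{-3/8}$.

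The principal obstacle will be securing the sharp $n^{1/4}$ rate in \ref{item:local2}. A pure $d_{XY}$-type hypothesis is of \emph{`supremum'} flavor and does not automatically supply pointwise rates this fine, so the Gaussian density of $\zmienna$ has to be exploited quantitatively in order to bound the probability that $\uwithnoise$ lands in the `bad' set where $K_{\omega_n}$ departs from its affine approximation by more than $n^{-1/4}$. Once the spreading framework is calibrated to deliver this, conditions \ref{item:energy2} and \ref{item:last-assumption} reduce to more standard continuity-plus-regularization computations.
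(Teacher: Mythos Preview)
Your high-level plan matches the paper's: since the two theorems share the same conclusion, it suffices to derive \ref{item:local2}--\ref{item:last-assumption} from the shape hypotheses of \cref{thm:CLT-shape}. The genuine gap is in the mechanism you propose for \ref{item:local2}. The paper does \emph{not} use the Gaussian smoothing by $\zmienna$ to manufacture the $n^{1/4}$ rate; smoothing cannot create a rate that the input does not already carry (if $K_{\omega_n}-K_\Omega$ were of order $n^{-1/8}$ on a neighbourhood of $u_0$, the random point $\uwithnoise$ would land there with probability close to~$1$ and \ref{item:local2} would fail). Instead, the assumptions of \cref{thm:CLT-shape} are calibrated so that an external result, \cite[Theorem~1.4]{Sniady2024modulus}, applies with exponent $\alpha=\tfrac14$ and delivers the \emph{uniform} estimate
\[
\sqrt[4]{n}\ \sup_{z\in [a_0,b_0]}\left|K_{\omega_n}(z)-K_\Omega(z)\right|\xrightarrow[n\to\infty]{P}0,
\]
after which \ref{item:local2} is a one-line Taylor expansion. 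Note also that the passage from $d_{XY}$-closeness of the profiles $\omega_n$ to closeness of the transition-measure CDFs $K_{\omega_n}$ is itself nontrivial (the transition measure is a nonlocal functional of the profile); your sketch treats this step as automatic, whereas it is exactly what the cited theorem provides.

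This uniform CDF rate is then the workhorse for the remaining two conditions as well, and your outline underestimates its role. For \ref{item:energy2}, weak continuity of $\mu\mapsto\ForceRegularized{\epsilon}{\mu}(u)$ at fixed $\epsilon>0$ does not survive the limit $\epsilon=n^{-1/2}\to 0$: the kernel becomes singular and one must control the near-diagonal contribution of $\nu=\mu_{\omega_n}$, not just of $\mu_\Omega$. The paper's upper bound (\cref{prop:energy-via-cdf} and the lemmas of \cref{sec:bounding-the-interaction}) hinges on the local refinement condition \eqref{eq:local-refinement-cdf}, i.e.\ precisely the $n^{1/4}$ rate above, to transfer the near-singularity integral from $\nu\times\nu$ to $\mu\times\mu$. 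Likewise, \ref{item:last-assumption} is obtained from \cref{lem:Cauchy-good-control}, whose hypothesis is again the uniform CDF rate. In both places the Gaussian $\zmienna$ does appear, but only as an averaging device \emph{after} the rate is in hand; it is not a substitute for it.
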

\begin{proof}
Both theorems share the same conclusion. Therefore, to establish the
implication, it suffices to show that the assumptions of \cref{thm:CLT-shape}
entail those of \cref{thm:CLT-trans}. We begin by assuming that the hypotheses
of \cref{thm:CLT-shape} hold.  The subsequent sections are
dedicated to verifying under these conditions each assumption of
\cref{thm:CLT-trans} in turn.
\end{proof}

\subsection{Assumptions of \cref{thm:CLT-shape} imply \assum{item:local2}}
Let us choose the constants $a_0$ and $b_0$ in such a way that
\[ a< a_0 < u_0 < b_0 < b. \]
The assumptions of \cref{thm:CLT-shape}
were fine-tuned in such a way that a recent result of \sniady
\cite[Theorem~1.4]{Sniady2024modulus} is applicable with the exponent $\alpha=
\frac{1}{4}$; as a consequence
\begin{equation}
	\label{eq:modulus}
	\sqrt[4]{n}\ \sup_{z\in [a_0,b_0]} 
	   \left| K_{\lambda^{(n)}}\left(\sqrt{n}\ z\right) - K_{\Omega}(z) \right| 
		\xrightarrow[n\to\infty]{P} 0
\end{equation}
and \assum{item:local2} follows easily from the Taylor expansion of
the cumulative function:
\[ K_\Omega\left(u_0 +  \frac{c}{\sqrt[4]{n}} \right) = 
   z_0 +  \frac{f_0 c}{\sqrt[4]{n}} + o\left( \frac{c}{\sqrt[4]{n}}\right). \]

\subsection{\Assum{item:energy2}. The lower bound}

For a given probability measure $\mu$ on the real line, the interaction energy
$\Force_{\mu}(u_0)$ (cf.~\eqref{eq:interaction-enerrgy}) can be viewed as an
integral with respect to the product measure $\mu\times\mu$ of the function
\[ \phi(z_1,z_2)= \frac{1}{z_2-z_1}
          \ \indicator_{(-\infty,u_0]}(z_1)\ \indicator_{(u_0,\infty)}(z_2)
\]
which has a singularity at $z_1=z_2=u_0$.
Our strategy is to approximate this function by some more regular functions.

Let us fix $\epsilon>0$. By Assumption
\ref{item:transition-measure-omega-is-absolutely-continuous} about the local
behavior of the transition measure~$\mu_{\Omega}$ there exists $\delta>0$ and a
non-negative continuous function $g\colon \R^2 \to \R_+$ with the following
three properties:
\begin{itemize}
	\item \emph{upper bound:} 
	\[ g \leq \phi, \]
	
	\item \emph{support is away from the singularity:} 
	\[ \text{if } g(z_1,z_2) > 0 \text{ then } z_1< u_0 - \delta \text{ and } 
	z_2> u_0 + \delta,\]
	
	\item \emph{the total mass is almost maximal:}
	\[ \iint g(z_1, z_2) \dif \mu_{\Omega}(z_1) \dif \mu_{\Omega}(z_2) > 
	   \Force_{\mu_{\Omega}}(u_0) - \epsilon. \]  
\end{itemize}

Our objective in this subsection is to demonstrate that the sequence of probabilities
\begin{equation}
	\label{eq:interaction-lowerbound}
	\Pro\left[\ForceRegularized{\frac{1}{\sqrt{n}}}{\mu_{\omega_n}}(\uwithnoise) >
	\Force_0 - \epsilon\right] \xrightarrow[n\to\infty]{} 1
\end{equation}
converges to $1$. To achieve this, we employ a conditioning argument on the event
\[ 
	\uwithnoise \in [u_0 - \delta, u_0 + \delta]  
.\] 
Since the probability of the complement of this event converges to
zero as $n \to \infty$, we can assume without loss of generality that
$\uwithnoise \in [u_0 - \delta, u_0 + \delta]$ holds almost surely.

By comparing the analogue of the kernel $\phi$ for the regularized interaction
energy with the kernel $g$, we obtain the inequality
\begin{equation}
	\label{eq:friday-inequality}
	\ForceRegularized{\frac{1}{\sqrt{n}}}{\mu_{\omega_n}}(\uwithnoise) \geq 
	\frac{2\delta}{2\delta + \frac{1}{\sqrt{n}}}
	\iint g(z_1,z_2) \, \mathrm{d}\mu_{\omega_n}(z_1) \, \mathrm{d}\mu_{\omega_n}(z_2).
\end{equation}
Note that the numerical factor on the right-hand side converges to 1 as $n \to \infty$.

Kerov's results (\cite{Kerov1993-transition} and \cite[Chapter~4,
Section~1]{KerovBook}) imply that \assum{item:A2} is sufficient to
conclude that the sequence of random measures $(\mu_{\omega_n})$ converges to
$\mu_{\Omega}$ in the weak topology of probability measures, in probability.
Consequently, the double integral on the right-hand side of 
\eqref{eq:friday-inequality} converges in probability:
\begin{equation}
	\label{eq:integral-convergence}
	\iint g(z_1,z_2) \, \mathrm{d}\mu_{\omega_n}(z_1) \, \mathrm{d}\mu_{\omega_n}(z_2) 
	\xrightarrow[n\to\infty]{\Pro}
	\iint g(z_1,z_2) \, \mathrm{d}\mu_{\Omega}(z_1) \, \mathrm{d}\mu_{\Omega}(z_2).
\end{equation}

Combining \eqref{eq:friday-inequality} and \eqref{eq:integral-convergence}, we
conclude that \eqref{eq:interaction-lowerbound} holds true, completing the proof
of the lower bound in \ref{item:energy2}. The proof of the upper bound is
postponed to \cref{sec:B2-upper-bound}.

\subsection{Bounding the interaction energy from above}
\label{sec:bounding-the-interaction}

We present an intermediate result crucial for proving that the upper bound in
\assum{item:energy2} holds.

\begin{proposition}
	\label{prop:energy-via-cdf} 
	
	Let constants $a<u_0<b$ be such that the probability measure $\mu$ restricted
to the interval $[a,b]$ is absolutely continuous, with a density given by a
bounded function which is bounded below by a positive constant. We denote
	\[z_0= F_\mu(u_0), \qquad 
	\Force_0 = \Force_{\mu}(u_0).\]

	For any $\epsilon > 0$
	and any open interval $J$ containing $z_0$, there exists an open neighborhood
	$G$ of $\mu$ in the topology of weak convergence of probability measures,
	along with a natural number $n_0 \in \mathbb{N}$ such that the following holds:
	
	For every $n \geq n_0$ and for any probability measure $\nu$ on the real line
	that satisfies both of the following conditions:
	\begin{itemize}
		\item	\emph{global proximity condition (``$\nu \approx \mu$''):}
		\begin{equation}
			\label{eq:global-proximity}
			\nu \in G,
		\end{equation}	
		\item	\emph{local refinement condition (``the \CDFs of $\nu$ and
			$\mu$ are locally very close''):}
		\begin{equation}
			\label{eq:local-refinement-cdf}
			\left|F_{\nu}(u) - F_\mu(u)\right| < \frac{1}{\sqrt[4]{n}} \qquad \text{for each $u \in J$},
		\end{equation}
	\end{itemize}
	the following bound is satisfied:
	\begin{equation}
		\label{eq:hard-energy-new}
		\E \left[\Force_{\nu}(\uwithnoise)\right] < \Force_0 + \epsilon.	
	\end{equation}
\end{proposition}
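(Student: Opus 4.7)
The approach I would take is to split the pairs $(z_1,z_2)$ entering the definition of $\Force_\nu(\uwithnoise)$ into a \emph{microscopic} part near the diagonal $z_1=z_2\approx u_0$ and a \emph{macroscopic} bulk part, controlled respectively by the local CDF condition combined with the Gaussian smoothing from $\zmienna$, and by the weak proximity condition $\nu\in G$. Concretely, I would fix $\delta>0$ such that $[u_0-2\delta,u_0+2\delta]\subset J\cap[a,b]$ and a threshold scale $\eta=n^{-1/4}$, writing $\Force_\nu(u)=\Force^{\mathrm{near}}_\nu(u)+\Force^{\mathrm{far}}_\nu(u)$ depending on whether $z_2-z_1<\eta$ or $z_2-z_1\geq\eta$.

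\textbf{Microscopic term.} Fubini gives
\[
\E\bigl[\Force^{\mathrm{near}}_\nu(\uwithnoise)\bigr]=\iint_{z_1<z_2<z_1+\eta}\frac{\Pro[z_1\leq\uwithnoise<z_2]}{z_2-z_1}\,\dif\nu(z_1)\,\dif\nu(z_2),
\]
and since the density of $\uwithnoise$ is bounded by $1/(t\sqrt{2\pi})$ with $t=n^{-1/4}$, the numerator is at most $(z_2-z_1)/(t\sqrt{2\pi})$. Using the local CDF bound and the upper density bound $f_\mu\leq M$, one has $\nu\bigl((z_1,z_1+\eta)\bigr)\leq M\eta+2n^{-1/4}$ for $z_1\in J$; after controlling the event $\uwithnoise\notin[u_0-\delta,u_0+\delta]$ via Gaussian tails, the remaining integral is bounded by a constant multiple of $\delta$ uniformly in $n$, which is rendered less than $\epsilon/2$ by shrinking $\delta$.

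\textbf{Macroscopic term, and main obstacle.} For the reference measure one has $\E[\Force^{\mathrm{far}}_\mu(\uwithnoise)]\to\Force_\mu(u_0)=\Force_0$ by dominated convergence (using the bounded density of $\mu$ and continuity of $\Force_\mu$ at $u_0$), so it suffices to bound $\E[\Force^{\mathrm{far}}_\nu(\uwithnoise)]-\E[\Force^{\mathrm{far}}_\mu(\uwithnoise)]$ by $\epsilon/2$. I would express this difference as a double integral of the Gaussian-smoothed bulk kernel against $\nu\times\nu-\mu\times\mu$ and apply Stieltjes integration by parts in each variable, converting it into an integral of kernel derivatives against $F_\nu-F_\mu$. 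The main obstacle is that the smoothed bulk kernel has sup-norm $O(n^{1/4})$ and total variation $O(n^{1/4})$ in each variable, so the naive estimate $\|F_\nu-F_\mu\|_\infty\cdot\mathrm{TV}$ matches but does not beat the CDF bound $n^{-1/4}$, yielding only an $O(1)$ error. Overcoming this requires exploiting the finer structure of the smoothed kernel; I would attempt a dyadic decomposition in the separation $z_2-z_1$, so that at scale $2^k\eta$ the kernel is bounded by $(2^k\eta)^{-1}$ while the corresponding $\nu\times\nu$-mass of the annulus near $u_0$ is of order $2^k\eta$, producing scale-wise errors that sum to a small total. The lower density bound $f_\mu\geq m$ should enter here via a bi-Lipschitz quantile coupling of $\nu$ with $\mu$, converting the $L^\infty$ CDF control into quantitative Wasserstein-type control at each scale.
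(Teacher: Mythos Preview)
Your microscopic estimate is sound and matches the paper's treatment of the near-diagonal region: bound the ratio $\Pro[z_1\le\uwithnoise<z_2]/(z_2-z_1)$ by the Gaussian density $O(n^{1/4})$, localize to $[u_0-\delta,u_0+\delta]$ via tail decay, and use the local CDF condition to bound the $\nu\times\nu$-mass by $O(\delta\cdot n^{-1/4})$.

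The macroscopic plan, however, creates an avoidable difficulty and is left incomplete. By cutting only at $\eta=n^{-1/4}$, your ``far'' term still contains separations down to $n^{-1/4}$, so the smoothed kernel has sup-norm $O(n^{1/4})$ and the Stieltjes comparison fails by exactly one order, as you observe. Your dyadic sketch also has a slip: at scale $2^k\eta$ the relevant $\nu\times\nu$-mass of pairs straddling $\uwithnoise$ is $O((2^k\eta)^2)$, not $O(2^k\eta)$; the product with the kernel is then $O(2^k\eta)$, and these sum geometrically to the \emph{top} scale rather than to something automatically small. You do not say where the scales terminate, nor how to handle the truly macroscopic pairs (say $z_1<u_0-\delta<u_0+\delta<z_2$) where the local CDF condition is unavailable.

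The paper sidesteps the obstacle by splitting at a \emph{macroscopic} interval $I\ni u_0$ with $\bar I\subset J$, rather than at $\eta$. Outside $I\times I$ with $z_1\le u_0<z_2$ the kernel is bounded below away from the singularity, so bare weak convergence ($\nu\in G$) already gives a bound $\le\Force_0+\epsilon'$; this is the main contribution. Inside $I\times I$ the paper does \emph{not} attempt to compare $\nu$ to $\mu$; it uses exactly the quantile coupling you mention (the lower density bound yields $|F_\nu^{-1}\!\circ F_\mu(z')-z'|\le Cn^{-1/4}$ on $J$) to transport the $\nu\times\nu$ integral to a $\mu\times\mu$ integral over a slightly enlarged region, and then bounds the latter \emph{absolutely} by $f_{\max}^2\,|J|+o(1)$ via the change of variables $t=z'_2-z'_1$ together with $\int_\R\Pro[\uwithnoise\in[z'_1+a,z'_1+b]]\,dz'_1=b-a$. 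The residual regions (both coordinates on the same side of $u_0$, or one very close and one outside $I$) contribute $o(1)$ by Gaussian-tail and small-mass arguments. The final inequality $\E[\Force_\nu(\uwithnoise)]\le\Force_0+2\epsilon'+f_{\max}^2|J|+o(1)$ is made $<\Force_0+\epsilon$ by shrinking $|J|$ and $\epsilon'$.

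The conceptual point you are missing is that the intermediate-scale contribution need not be shown \emph{close to} its analogue for $\mu$; it only needs to be shown \emph{small}, of order $|I|$. Once you aim for that, the integration-by-parts obstacle evaporates and the quantile coupling does the work in one step rather than scale by scale.
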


The remaining part of this subsection is devoted to its proof.

\subsubsection{The expected value of $\Force_{\nu}(\uwithnoise)$ as a double integral.
Decomposition of the integration domain}

The expected value on the left-hand side of \eqref{eq:hard-energy-new} can be
written as:
\begin{equation}
	\label{eq:energy-find-upper-bound-new}
	\E \left[ \Force_{\nu}(\uwithnoise) \right] = 
	\iint_{z_1 < z_2}  
	\frac{ \Pro\left[ \uwithnoise \in [z_1,z_2]   \right] }{z_2- z_1 }
	\dif \nu(z_1) \dif \nu(z_2) 
	. 
\end{equation}

Let \( I \) be an open interval containing \( z_0 \), such that its closure \(
\overline{I} \) is contained within \( J \). The necessity of introducing
another open interval, in addition to \( J \), will become clearer in the proof
of \cref{lem:close-to-singularity}.

We will divide the integration region in the integral on the right-hand side of
\eqref{eq:energy-find-upper-bound-new} into a number of (non-disjoint) parts,
and analyze the contribution of each part separately. This division follows the
following scheme which forms a tree:
\begin{itemize}
	\item The small macroscopic region where $(z_1,z_2)\in I\times I$ and we have very good
control over the \CDF of~$\nu$. This region splits further:
	\begin{itemize}
		\item the part where 
		\[ z_2-z_1 \geq  \frac{\log n}{\sqrt[4]{n}} \] 
		and the denominator on the right-hand side of
\eqref{eq:energy-find-upper-bound-new} is well-separated from the singularity.
We will examine the contribution of this area in
\cref{lem:close-to-singularity,lem:rhs-is-small}. This scenario represents one
of the two cases where the limit contribution is non-zero for $n\to\infty$.
However, it is important to note that this contribution approaches zero in the
iterated limit, as $n\to\infty$ and the length of the interval $I$ tends to
zero.

		\item the part where 
		\[ 0< z_2-z_1< \frac{\log n}{\sqrt[4]{n}} \]
		and the coordinates are close to each other. 
			This part splits into three further (non-exclusive) possibilities:
			\begin{itemize}
				\item 
				Both coordinates are not too far from $z_0$: 
\begin{equation}
	\label{eq:both-not-too-far}
	|z_1- z_0|,\ |z_2- z_0| \leq \frac{1}{\sqrt[8]{n} \log n}. 
\end{equation}
				We will investigate the contribution of this area in
\cref{lem:very-close-to-z0} and we will show that it converges to zero.
				
    \item Both coordinates are on the same side of $z_0$ and well separated from $z_0$, i.e., one of the following two cases holds true:
\begin{equation}
	\label{eq:on-the-same-side}
	z_1, z_2 > z_0 + \frac{\log n}{\sqrt[4]{n}} \quad \text{or} \quad 
	z_1, z_2 < z_0 - \frac{\log n}{\sqrt[4]{n}}.
\end{equation}
We will investigate the contribution of the first case in
\cref{lem:close-to-each-other-away-from-z0} and show that it converges to
zero. The analysis for the second case is analogous and will be omitted.

\item The third option, which occurs when neither condition
\eqref{eq:both-not-too-far} nor \eqref{eq:on-the-same-side} is satisfied, is not
possible for sufficiently large $n$ for which
\[ \frac{2 \log n}{\sqrt[4]{n}} < \frac{1}{\sqrt[8]{n} \log n}. \]

Indeed, let us say that it is $z_2$ that is far from $z_0$ so that
\eqref{eq:both-not-too-far} is false.
The case when $z_2$ is on the left-hand
side of $z_0$, so that:
\[ z_1< z_2 < z_0 - \frac{1}{\sqrt[8]{n} \log n} \]
is not possible as it would contradict that the condition \eqref{eq:on-the-same-side} is not satisfied.

Thus $z_2$ is on the right-hand side of $z_0$ and 
\[ z_2 > z_0 + \frac{1}{\sqrt[8]{n} \log n}. \]
Since condition \eqref{eq:on-the-same-side} is not satisfied, it follows that:
\[		
z_1 \leq z_0 + \frac{\log n}{\sqrt[4]{n}}.
\]	
This leads to a contradiction:
	\[
		\frac{\log n}{\sqrt[4]{n}} > z_2 - z_1 > 
		\left(z_0 + \frac{1}{\sqrt[8]{n} \log n}\right) -
		\left(z_0 + \frac{\log n}{\sqrt[4]{n}}\right) \gg \frac{\log n}{\sqrt[4]{n}}
	\]
	for sufficiently large values of $n$.
Therefore, this case is not possible for $n$ above a certain threshold.	
				
			\end{itemize}
		
	\end{itemize}

	\item The complement of $I\times I$,  i.e., $(z_1,z_2)\notin I\times I$.
	This region splits further:
	\begin{itemize}
		\item 
		the part where 	the coordinates are on the opposite sides of $z_0$:
		\[ z_1< z_0< z_2.\]
We will investigate the contribution of this area in
\cref{lem:close-to-singularity-opposite-sides}. This is the
region which provides the main asymptotic contribution.

		\item and the one where the coordinates
are the same side of $z_0$,
say 
\[ z_0 < z_1, z_2.\]
(the other case when $z_1,z_2<z_0$ follows a similar pattern and we skip it).
There are two additional cases:
\begin{itemize}
	\item $z_1$ and $z_2$ are well separated from $z_0$ and the first inequality in
\eqref{eq:on-the-same-side} holds true. As we already mentioned,
\cref{lem:close-to-each-other-away-from-z0} is applicable in this case; 
it shows that the contribution of this area converges to zero.
	
	\item $z_1$ is very close to $z_0$ while $z_2$ is very far:
	\[ z_0< z_1< z_0 + \frac{\log n}{\sqrt[4]{n}} \qquad \text{and} \qquad z_2\notin I. \]
	We will study the contribution of this area in
\cref{lem:away-from-singularity-on-the-same-side} and we will show that it
converges to zero. The other case, when $z_1,z_2< z_0$ is analogous and we skip
it.
	
\end{itemize}
	\end{itemize}
\end{itemize}

\medskip

We will now explore the specific contributions of these cases before presenting
the culmination of our proof in \cref{sec:conclusion-prop-energy-via-cdf}.

\subsubsection{Close to the singularity}

In this section we analyze the integral on the left-hand side of
\eqref{eq:integral-over-I-times-I}. We will do it in two steps. Firstly, in
\cref{lem:close-to-singularity} we bound this integral over the product measure
$\nu \times \nu$ by a similar integral over the more regular measure $\mu\times
\mu$.

\paragraph{Conversion of the integral to $\mu \times \mu$}
\label{sec:conversion}

\begin{lemma}
	\label{lem:close-to-singularity}
	Let $\mu$ be a probability measure on the real line, and let constants $a<u_0<b$
	be such that $\mu$ restricted to the interval $[a,b]$ is absolutely continuous,
	with a density given by a function which is bounded below by a positive constant. 
	
	For any pair of open intervals \( I \) and \( J \)
	containing \( z_0 \), where the closure of~\( I \) is contained within \( J \)
	and \( F_\mu \) is Lipschitz continuous on \( J \), there exists a constant $C>0$ and a natural
	number $n_0 \in \N$ such that the following holds:
	
	For every $n \geq n_0$ and for any probability measure $\nu$ on the real line
	that satisfies the local refinement condition \eqref{eq:local-refinement-cdf}
	the following bound is satisfied:
	\begin{multline}
		\label{eq:integral-over-I-times-I}
		\iint\limits_{
			\substack{\{(z_1,z_2): \\ z_2-z_1 > \log n \ n^{-\frac{1}{4} }, 
				\\ (z_1,z_2)\in I \times I\}}
		}
		\frac{ \Pro\left[ \uwithnoise \in [z_1,z_2]   \right] }{z_2- z_1 }
		\dif \nu(z_1) \dif \nu(z_2) 
		\leq \\
		\iint\limits_{\substack{\{(z'_1,z'_2): \\ 
				z'_2 -  z'_1 > \left( \log n-  2 C \right) \ n^{-\frac{1}{4}}, \\ 
				(z'_1,z'_2)\in J \times J\}}}
		\frac{ \Pro\Big[ \uwithnoise \in 
			\left[z'_1- C n^{-\frac{1}{4}}, \;
			z'_2+ C n^{-\frac{1}{4}} \right]   
			\Big] 
		}{
			z'_2- z'_1 - 2 C n^{-\frac{1}{4}}
		}
		\dif \mu(z'_1) \dif \mu(z'_2). 		
	\end{multline}	
\end{lemma}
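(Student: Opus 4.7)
The plan is to convert the integral over $\nu \times \nu$ on the left-hand side into one over the more regular product measure $\mu \times \mu$ via the monotone transport coupling $T(z) := F_\mu^{-1}(F_\nu(z))$, which pushes $\nu$ forward to $\mu$. The whole proof is driven by a single quantitative estimate: for every $z \in I$,
\[ |T(z) - z| \leq C n^{-1/4}, \qquad \text{where } C := 1/c, \]
and $c > 0$ is a lower bound for the density of $\mu$ (valid on a neighborhood of $\overline{I}$ inside $[a,b]$, shrinking $J$ if necessary). This follows by combining the defining identity $F_\mu(T(z)) = F_\nu(z)$ with the local refinement condition \eqref{eq:local-refinement-cdf}, which together yield $|F_\mu(T(z)) - F_\mu(z)| < n^{-1/4}$, and then inverting via the Lipschitz character of $F_\mu^{-1}$ supplied by the density lower bound. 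The role of the nesting $\overline{I} \subset J$ is precisely to provide room for this displacement: the $O(n^{-1/4})$ shift produced by the transport must not push points of $I$ out of $J$, where \eqref{eq:local-refinement-cdf} is available.

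Once the displacement bound is in hand, the change of variables proceeds by inspection. Taking independent $(Z_1, Z_2) \sim \nu \otimes \nu$ and setting $(Z'_1, Z'_2) := (T(Z_1), T(Z_2)) \sim \mu \otimes \mu$, the left-hand side of \eqref{eq:integral-over-I-times-I} equals
\[ \E\!\left[ \frac{\Pro\bigl[\uwithnoise \in [Z_1, Z_2]\bigr]}{Z_2 - Z_1} \, \indicator_{(Z_1, Z_2) \in I \times I} \, \indicator_{Z_2 - Z_1 > \log n \cdot n^{-1/4}} \right]. \]
On the indicated event, the displacement bound yields three monotone estimates: $[Z_1, Z_2] \subseteq [Z'_1 - C n^{-1/4}, Z'_2 + C n^{-1/4}]$, so the numerator is only increased under the substitution $Z_i \leadsto Z'_i$; $Z_2 - Z_1 \geq (Z'_2 - Z'_1) - 2 C n^{-1/4}$, so the reciprocal of the denominator is also only increased; and the event itself is contained in $\{(Z'_1, Z'_2) \in J \times J\} \cap \{Z'_2 - Z'_1 > (\log n - 2C) n^{-1/4}\}$, for $n$ large enough that the transport cannot push points of $\overline{I}$ outside of $J$. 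Enlarging the region of integration to this larger set is legitimate because the integrand is nonnegative; rewriting the resulting expectation as a $\mu \otimes \mu$ integral gives exactly the right-hand side of \eqref{eq:integral-over-I-times-I}.

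The only technical step that requires care is the displacement bound, where the two hypotheses of the lemma---the sup-norm closeness of $F_\nu$ and $F_\mu$ on $J$, and the pointwise density lower bound for $\mu$---have to be combined into a single quantitative estimate on the transport. Everything afterwards is monotone bookkeeping; in particular the Lipschitz continuity of $F_\mu$ on $J$ assumed in the lemma is used only implicitly, to guarantee that $F_\mu$ is strictly monotone on $J$ and hence that $F_\mu^{-1}$ is a genuine function on $F_\mu(J)$.
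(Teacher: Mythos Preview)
Your overall plan---the quantile/monotone-rearrangement coupling together with a displacement bound---is exactly what the paper does. There is, however, a genuine gap in the direction you chose: the map $T = F_\mu^{-1}\circ F_\nu$ does \emph{not} push $\nu$ forward to $\mu$ unless $\nu$ is atomless. If $\nu$ has an atom (and in the intended application $\nu$ is the transition measure of a Young diagram, hence purely atomic), then $F_\nu(Z)$ is not uniform on $[0,1]$, so $T(Z)=F_\mu^{-1}(F_\nu(Z))$ fails to be $\mu$-distributed; your final step, ``rewriting the resulting expectation as a $\mu\otimes\mu$ integral,'' therefore does not go through.

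The remedy, which is what the paper does, is to reverse the direction: couple via a common uniform $U$ by setting $Z=F_\nu^{-1}(U)$ and $Z'=F_\mu^{-1}(U)$. Since $\mu$ is continuous on $J$ one recovers $U=F_\mu(Z')$, hence $Z=(F_\nu^{-1}\circ F_\mu)(Z')$, and this direction \emph{always} yields $Z\sim\nu$ and $Z'\sim\mu$. The displacement bound $|Z-Z'|\le C\,n^{-1/4}$ is then the paper's \cref{lem:quantile-of-cdf-is-almost-identity}, proved by the same mechanism you sketch but now having to accommodate the possible jumps of $F_\nu$ (hence the appearance of the left limit $F_\nu(z^-)$ there). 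Once the coupling points the right way, your monotone bookkeeping is correct verbatim. A minor side point: rather than ``shrinking $J$,'' the paper inserts an intermediate open interval $I'$ with $\overline I\subset I'\subset\overline{I'}\subset J$ to absorb the $O(n^{-1/4})$ displacement without leaving the region where \eqref{eq:local-refinement-cdf} is available.
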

\newcommand{\intermediate}{I'}
\begin{proof}
	Let $\intermediate$ be an open interval such that the following sequence of inclusions is valid:
	\begin{equation}
		\label{eq:inclusions}
		z_0 \in I \subset \overline{I} \subset 
		\intermediate \subset \overline{\intermediate} 
		\subset J. 
	\end{equation}

	From the local refinement condition \eqref{eq:local-refinement-cdf}, it follows
	that the image \( F_\nu(I) \) is contained within an \( o(1)
	\)-neighborhood of \( F_\mu(I) \) as \( n \to \infty \). Given that \( F_\mu \)
	is a homeomorphism on the interval \( J \), we can conclude that
	\[
	\overline{F_\mu(I)} \subset F_\mu(\intermediate).
	\]
	Thus, there exists an integer \( n_0 \) such that for all sufficiently large \(
	n \geq n_0 \), and for any measure \( \nu \) satisfying the local refinement
	condition \eqref{eq:local-refinement-cdf}, the following inclusion holds for the
	corresponding quantiles:
	\[
	F_\nu(I) \subseteq F_\mu(\intermediate).
	\] 
	
	\medskip
	
	For each $i\in\{1,2\}$, the integral on the left-hand side of
	\eqref{eq:integral-over-I-times-I} over the variable \( z_i \in I \) with
	respect to the measure \( \nu \) can be transformed using a change of variables
	by setting \( z_i = F_\nu^{-1}(t_i) \). The variable $t_i$ has a natural
	interpretation as the quantile of the measure $\nu$. This allows us to express
	the integral in terms of the uniform measure over the variable $t_i$, where \(
	t_i \in F_\nu(I) \).
	
	An analogous idea can be applied to the integral on the right-hand side of
	\eqref{eq:integral-over-I-times-I} over the variable \( z'_i \in J \) with
	respect to the measure $\mu$. By combining these two changes of the variables we
	can express the variable on the left-hand side
	\[ z_i = \left( F_\nu^{-1} \circ F_\mu\right) (
	z_i') \]
	in terms of the variable on the right-hand side, over the set
	\[ \big( F_\mu^{-1} \circ F_\nu^{-1}\big)(I) \subseteq J.\]
	
	We apply \cref{lem:quantile-of-cdf-is-almost-identity}. With the notations of
	that result, our claim holds true for 
	\[ C:= \frac{1}{f_{\min}}. \qedhere \]
\end{proof}

\begin{lemma}
	\label{lem:quantile-of-cdf-is-almost-identity} 
	
	Under the assumptions of
	\cref{lem:close-to-singularity} and \eqref{eq:inclusions}, we denote by
	$f_{\min}>0$ any lower bound on the density of the measure $\mu$ on the
	interval $J$.
	If \( n \) is sufficiently large, then for any \( z' \in
	\intermediate \) and any probability measure $\nu$ that satisfies the local
	refinement condition \eqref{eq:local-refinement-cdf}, the following bound
	holds:
	\[
	\left| \left(F_\nu^{-1} \circ F_\mu\right)(z') - z' \right| \leq
	\frac{1}{f_{\min} \sqrt[4]{n}}.
	\]
\end{lemma}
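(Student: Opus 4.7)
The plan is to sandwich the point $w := \bigl(F_\nu^{-1} \circ F_\mu\bigr)(z')$ between two explicit points $w_\pm := z' \pm \frac{1}{f_{\min} \sqrt[4]{n}}$. Setting $t := F_\mu(z')$, I would need to check two things: that $w_\pm \in J$ (so that the local refinement condition may be invoked there), and that $F_\nu(w_-) < t \leq F_\nu(w_+)$ (so that the quantile $w$ is forced into $(w_-, w_+]$).

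The first point is a routine compactness observation: since $z' \in \intermediate$ and $\overline{\intermediate} \subset J$, the distance from $\overline{\intermediate}$ to $\R \setminus J$ is a strictly positive constant independent of $\nu$ and of $z'$, so $w_\pm \in J$ for all sufficiently large $n$.

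For the second point, I would first use the lower bound $f_{\min}$ on the density of $\mu$: integrating it over the interval of length $\frac{1}{f_{\min} \sqrt[4]{n}}$ gives $F_\mu(w_+) - F_\mu(z') \geq f_{\min}\,(w_+ - z') = n^{-1/4}$ and, symmetrically, $F_\mu(z') - F_\mu(w_-) \geq n^{-1/4}$. Transporting these inequalities to $F_\nu$ by means of the \emph{strict} local refinement condition \eqref{eq:local-refinement-cdf} yields
\[
F_\nu(w_+) > F_\mu(w_+) - \frac{1}{\sqrt[4]{n}} \geq t,
\qquad
F_\nu(w_-) < F_\mu(w_-) + \frac{1}{\sqrt[4]{n}} \leq t.
\]
Adopting the convention $F_\nu^{-1}(t) = \inf\{x \in \R : F_\nu(x) \geq t\}$ and using monotonicity of $F_\nu$, the first inequality gives $w \leq w_+$ and the second gives $w > w_-$, which is exactly the claimed bound $|w - z'| \leq \frac{1}{f_{\min} \sqrt[4]{n}}$.

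There is no real obstacle here: the lemma is essentially a one-line calculation combining (i) the fact that the density bound makes $F_\mu$ locally expand distances by at least $f_{\min}$ on $J$, with (ii) the strict form of the local refinement condition which then transports the expansion to $F_\nu$. The only two items that require care are the choice of the buffer interval $\intermediate$ (needed to keep $w_\pm$ inside $J$ uniformly in $z' \in \intermediate$) and the accounting of strict versus non-strict inequalities, so that the constant comes out as $\frac{1}{f_{\min}}$ and not $\frac{2}{f_{\min}}$. This sharpness is exactly what makes the choice $C := 1/f_{\min}$ at the end of the proof of \cref{lem:close-to-singularity} correct.
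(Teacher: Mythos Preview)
Your proof is correct and follows essentially the same sandwiching strategy as the paper: use the lower bound $f_{\min}$ on the density of $\mu$ to convert a CDF gap of $n^{-1/4}$ into a spatial gap of $\frac{1}{f_{\min}\sqrt[4]{n}}$, then invoke the strict local refinement condition. The only cosmetic difference is the direction of the sandwich: you fix $w_\pm$ around $z'$ and trap $w$, whereas the paper defines its endpoints $z_\pm$ around $z$ (your $w$) and traps $z'$; your orientation is arguably tidier because $w_\pm \in J$ is immediate from $z' \in I'$ and $\overline{I'} \subset J$.
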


\begin{proof}
	Let \( z = \big(F_\nu^{-1} \circ F_\mu\big)(z') \). Due to the definition of the quantile
	function, particularly at discontinuities, it follows that
	\begin{equation}
		\label{eq:quantile}
		F_\mu(z') \in \left[ F_\nu(z^-),\ F_\nu(z)\right],
	\end{equation}
	with the usual convention that $F_\nu(z^-)= \lim_{\xi\to z^-} F_\nu(\xi)$
	denotes the left limit of \CDF.
	
	We define:
	\begin{align*}
		z_+ & := z + \frac{|F_\mu(z) - F_\nu(z)|}{f_{\min}} , \\ 
		z_- & := z - \frac{ |F_\mu(z) - F_\nu(z^-)|}{f_{\min}}.
	\end{align*}
	
	There exists \( n_0 \) such that for all sufficiently large \( n \geq n_0 \),
	both \( z_- \) and \( z_+ \) are guaranteed to remain within the interval \( J
	\) by the local refinement condition \eqref{eq:local-refinement-cdf}. Since \(
	F_\mu \) locally grows at least linearly with slope \( f_{\min} \), we have:
	\[
	F_\mu(z_+) \geq F_\mu(z) + \left|F_\mu(z) - F_\nu(z)\right| \geq F_\nu(z) \geq F_\mu(z'),
	\]
	where the last inequality follows by \eqref{eq:quantile}.
	As $F_\mu$ is strictly increasing on $J$, we conclude that 
	\[
	z' \leq z_+.
	\]
	
	The argument for the inequality \( z_- \leq z' \) follows similarly.
	
	Consequently, as $z_- \leq z \leq z_+$, we establish that:
	\[
	|z - z'| \leq \frac{1}{f_{\min}} \max\left( |F_\mu(z) - F_\nu(z)|,\ |F_\mu(z) - F_\nu(z^-)| \right).
	\]
	This completes the proof by the local refinement condition
	\eqref{eq:local-refinement-cdf}.
\end{proof}

\paragraph{Estimate for the integral with respect to $\mu\times\mu$}

\begin{lemma}
	\label{lem:rhs-is-small}
	Under the assumptions of \cref{lem:close-to-singularity}, let us further suppose
	that the density of the measure $\mu$, when restricted to the interval $J$, is
	bounded above by $f_{\max}$. Given these conditions, we can establish an upper
	bound for the right-hand side of \eqref{eq:integral-over-I-times-I} 
	by:
	\[ f_{\max}^2 \ |J| + o(1).\]
\end{lemma}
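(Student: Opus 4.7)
The plan is to first use the density bound $d\mu \leq f_{\max}\, d\lambda$ on $J$ to replace $d\mu(z'_1)\,d\mu(z'_2)$ by $f_{\max}^2\, dz'_1\,dz'_2$, reducing the target estimate $f_{\max}^2 |J| + o(1)$ to showing that the analogous integral against Lebesgue measure is at most $|J| + o(1)$.

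The key step I would carry out is a Fubini calculation in the coordinates $(z'_1, w)$, where $w := z'_2 - z'_1 \in \left((\log n - 2C) n^{-1/4},\, |J|\right]$. For each fixed $w$, the probability in the numerator may be rewritten as $\int_{\R} \indicator_{u \in [z'_1 - C n^{-1/4},\, z'_1 + w + C n^{-1/4}]}\, \rho_n(u)\, du$, where $\rho_n$ denotes the density of $\uwithnoise$. Swapping the order of integration in the inner $z'_1$-integral yields
\[
\int_{\R} \Pro\!\left[\uwithnoise \in \left[z'_1 - C n^{-1/4},\, z'_1 + w + C n^{-1/4}\right]\right] dz'_1 \;=\; w + 2C n^{-1/4},
\]
simply because $\rho_n$ integrates to one; restricting $z'_1$ to $J$ only makes this smaller. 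This is the conceptual crux: even though $\rho_n$ concentrates around $u_0$ at scale $n^{-1/4}$, all I need from it is that it has total mass one.

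What remains is to evaluate the one-dimensional integral
\[
\int_{(\log n - 2C) n^{-1/4}}^{|J|} \frac{w + 2C n^{-1/4}}{w - 2C n^{-1/4}}\, dw
\;=\; \int_{(\log n - 2C) n^{-1/4}}^{|J|} \left(1 + \frac{4C n^{-1/4}}{w - 2C n^{-1/4}}\right) dw.
\]
The contribution of the constant $1$ gives $|J| - (\log n - 2C) n^{-1/4} = |J| + o(1)$, and the second summand integrates to $4C n^{-1/4}$ times a logarithm of the ratio $|J|/((\log n - 4C) n^{-1/4})$, which is of order $n^{-1/4} \log n = o(1)$.

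I do not anticipate any serious obstacle here: the entire point of converting the $\nu\times\nu$ integral to $\mu\times\mu$ in the preceding \cref{lem:close-to-singularity} was precisely to enable this crude density bound together with a clean Fubini swap against Lebesgue measure. The bound $f_{\max}^2 |J|$ is intentionally loose and does not involve $\Force_0$, because the outer proof strategy subsequently sends $|J|\to 0$ (after $n\to\infty$) to drive this particular contribution to zero.
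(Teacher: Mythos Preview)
Your proposal is correct and follows essentially the same route as the paper: the same change of variables $w=z'_2-z'_1$, the same Fubini swap to evaluate $\int_{\R}\Pro[\uwithnoise\in[z'_1-Cn^{-1/4},z'_1+w+Cn^{-1/4}]]\,dz'_1=w+2Cn^{-1/4}$, and the same splitting of the resulting one-dimensional integral into $|J|+o(1)$ plus an $O(n^{-1/4}\log n)$ error.
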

\begin{proof}
	We begin by applying a change of variables to the integral on the right-hand
	side of \eqref{eq:integral-over-I-times-I}. Let us integrate over the variables:
	\[ z_1', \qquad t := z_2' - z_1'. \]
	To obtain an upper bound, we extend the integration domain as follows: 
	we integrate $z_1'$ over the whole real line $\R$, and we integrate $t$ over the interval 
	\[\left[ \frac{\log n - 2C}{\sqrt[4]{n}},\  |J|\right].\]
	
	For a fixed value of $t = z_2' - z_1'$, the integral of the numerator simplifies to:
	\[ \int \mathbb{P}\left[\uwithnoise \in [z_1'+a, z_1'+b]\right] \,\mathrm{d}z_1' = b-a. \]
	Notably, this result is independent of the specific probability distribution of $\uwithnoise$.
	
	Consequently, we can bound the right-hand side of \eqref{eq:integral-over-I-times-I} by:
	\begin{multline*} 
		f_{\max}^2 \int_{ (\log n - 2 C) n^{- \frac{1}{4} } }^{|J|}  
		\frac{t+2 C n^{-\frac{1}{4}}}{t- 2C n^{-\frac{1}{4}}} \dif t \leq \\ 
		f_{\max}^2 \left( |J| + \frac{4C}{\sqrt[4]{n}} 
		\int_{(\log n - 4 C) n^{-\frac{1}{4}}}^{|J|+ 2 C n^{-\frac{1}{4}}}  
		\frac{1}{u} \dif u \right) =
		f_{\max}^2 \ |J| + o(1),
	\end{multline*}	
	which completes the proof.
\end{proof}

\subsubsection{Extremely close to the singularity}

\begin{lemma}
	\label{lem:very-close-to-z0} 
Let $\mu$ be as in \cref{prop:energy-via-cdf}.
There exists a sequence $(C_n)$ which converges to
zero such that for any measure $\nu$ which fulfills the local refinement
condition \eqref{eq:local-refinement-cdf} the following bound holds true:
	\[ 	
 \iint\limits_{\substack{z_1 < z_2, \\
			|z_1-z_0|,\ |z_2-z_0| \leq \frac{1}{\sqrt[8]{n} \log n}
		}}  
	\frac{ \Pro\left[ \uwithnoise \in [z_1,z_2]   \right] }{z_2- z_1 }
	\dif \nu(z_1) \dif \nu(z_2)  < C_n.
	\]
\end{lemma}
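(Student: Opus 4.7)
The plan is to exploit that $\uwithnoise = u_0 + n^{-1/4}\,\zmienna$ has a Gaussian density uniformly bounded by $\tfrac{\sqrt[4]{n}}{\sqrt{2\pi}}$; this immediately yields
\[ \Pro[\uwithnoise \in [z_1,z_2]] \leq \frac{\sqrt[4]{n}}{\sqrt{2\pi}}\,(z_2-z_1), \]
which cancels the singular factor $\tfrac{1}{z_2-z_1}$ and replaces the integrand by the constant $\tfrac{\sqrt[4]{n}}{\sqrt{2\pi}}$. This is the one crucial use of the specific (Gaussian) distribution of $\zmienna$ in this lemma.

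Next I would control the $\nu\otimes\nu$-mass of the tiny integration region. Set $B := \bigl[z_0 - n^{-1/8}/\log n,\; z_0 + n^{-1/8}/\log n\bigr]$. For $n$ large enough $B \subseteq J$, so the local refinement condition \eqref{eq:local-refinement-cdf} applied at the endpoints of $B$ gives $\nu(B) \leq \mu(B) + 2n^{-1/4}$. Since $\mu$ has a density bounded above by some $f_{\max}$ on $J$ (by the standing hypothesis of \cref{prop:energy-via-cdf}), one has $\mu(B) \leq 2 f_{\max}/(n^{1/8}\log n)$, and for all sufficiently large $n$ this dominates $2n^{-1/4}$. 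Hence $\nu(B) \leq C_1/(n^{1/8}\log n)$, with $C_1$ independent of $\nu$ among all measures satisfying \eqref{eq:local-refinement-cdf}.

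Combining the two ingredients — and noting that the integration region is contained in $B \times B$ — the double integral is bounded by
\[ \frac{\sqrt[4]{n}}{\sqrt{2\pi}} \cdot \nu(B)^2 \leq \frac{C_2}{\log^2 n}, \]
so the conclusion holds with $C_n := C_2/\log^2 n$.

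There is essentially no serious obstacle here; the only quantitative point worth flagging is the role of the $1/\log n$ factor in the radius of $B$. It is precisely this logarithmic tightening that makes $\mu(B)^2 = O(n^{-1/4}/\log^2 n)$ dominate the squared local-refinement error $(2n^{-1/4})^2 = 4n^{-1/2}$ and, after multiplication by the $\sqrt[4]{n}$ blow-up of the density of $\uwithnoise$, produces a bound that genuinely converges to zero rather than merely remaining bounded; this is why the threshold $n^{-1/8}/\log n$ rather than $n^{-1/8}$ was chosen in the decomposition of the integration domain.
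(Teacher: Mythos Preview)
Your proof is correct and follows essentially the same route as the paper: bound the integrand by the Gaussian density $O(\sqrt[4]{n})$ of $\uwithnoise$, bound $\nu(B)$ via the local refinement condition and the bounded density of $\mu$, and multiply. Your version is in fact slightly sharper (an explicit $O(1/\log^2 n)$ rather than merely $o(1)$) and your closing remark on the role of the $1/\log n$ factor in the radius is a useful addition.
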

\begin{proof}
The integral is bounded by 
the product of some upper bound on the integrand with the measure of the domain of integration.

Observe that the integrand is bounded above by the density of the random
variable $\uwithnoise$; this density is of order $O\left( \sqrt[4]{n} \right)$.

The measure of the domain of integration bounded by the square of:
\[   
\nu\left( \left[ 
z_0 -  \frac{1}{\sqrt[8]{n} \log n}, \
z_0 +  \frac{1}{\sqrt[8]{n} \log n} 
\right] 
\right)
\leq 
\mu\left( \left[ 
z_0 -  \frac{1}{\sqrt[8]{n} \log n}, \
z_0 +  \frac{1}{\sqrt[8]{n} \log n} 
\right] 
\right)
+ \frac{2}{\sqrt[4]{n}}.
\]
Clearly, the right-hand side is of order at most $o\left( \frac{1}{\sqrt[8]{n}}
\right)$, so its square us of order $o\left( \frac{1}{\sqrt[4]{n}}
\right)$. 

The whole product is therefore of order $o(1)$, as required.
\end{proof}

\subsubsection{Both variables on the same side of $z_0$, but not too close}

\begin{lemma}
	\label{lem:close-to-each-other-away-from-z0} There exists a sequence $(C_n)$
	which converges to zero, such that for any integer $n\geq 1$ and any
	probability measure $\nu$ on the real line the following bound is true:
	\[
	\iint\limits_{\substack{
			\{ (z_1,z_2): \\ 
			z_2>z_1 > z_0 + n^{-\frac{1}{4}} \log n 
			\}
	}}
	\frac{ \Pro\left[ \uwithnoise \in [z_1,z_2]   \right] }{z_2- z_1 }
	\dif \nu(z_1) \dif \nu(z_2) 
	\leq C_n.
	\]
\end{lemma}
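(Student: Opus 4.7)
The plan is a soft, $\nu$-uniform pointwise bound on the integrand, exploiting that the region in question sits in the Gaussian tail of $\uwithnoise$.

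Since $\uwithnoise = u_0 + \frac{1}{\sqrt[4]{n}}\zmienna$ with $\zmienna$ a standard Gaussian, $\uwithnoise$ has density
\[ p_n(u) = \sqrt[4]{n}\,\varphi\!\left(\sqrt[4]{n}\,(u - u_0)\right), \qquad \varphi(x) = \tfrac{1}{\sqrt{2\pi}}\, e^{-x^2/2}. \]
Writing the integrand as an average of $p_n$ over $[z_1, z_2]$, one has the trivial bound
\[ \frac{\Pro\!\left[\uwithnoise \in [z_1, z_2]\right]}{z_2 - z_1} = \frac{1}{z_2 - z_1}\int_{z_1}^{z_2} p_n(u)\, du \leq \sup_{u \in [z_1, z_2]} p_n(u). \]
On the integration domain of the lemma, where $u_0$ plays the role of $z_0$ consistently with the preceding decomposition, every $u \in [z_1, z_2]$ satisfies $\sqrt[4]{n}(u - u_0) \geq \log n$. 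Since $\varphi$ is decreasing on $[0, \infty)$, this yields the uniform pointwise bound
\[ \frac{\Pro\!\left[\uwithnoise \in [z_1, z_2]\right]}{z_2 - z_1} \;\leq\; \sqrt[4]{n}\,\varphi(\log n) \;=\; \frac{\sqrt[4]{n}}{\sqrt{2\pi}}\, e^{-(\log n)^2/2}. \]

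Finally, since the right-hand side depends on neither $(z_1, z_2)$ nor $\nu$, and $\nu\times\nu$ is a probability measure, integrating this constant over the subdomain gives a bound of the same value for the whole double integral. I would therefore set
\[ C_n := \frac{\sqrt[4]{n}}{\sqrt{2\pi}}\, e^{-(\log n)^2/2}, \]
and conclude, observing that $(\log n)^2/2$ grows much faster than $\tfrac{1}{4}\log n$, so $C_n \to 0$. I do not foresee a real obstacle here: the statement is essentially a quantitative form of the fact that $\uwithnoise$ has standard deviation $n^{-1/4}$, while the lemma's region lies at distance at least $\log n$ from $u_0$ measured in units of that standard deviation, so the Gaussian tail easily crushes the $\sqrt[4]{n}$ density normalization.
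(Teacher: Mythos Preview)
Your proof is correct and follows essentially the same approach as the paper: bound the integrand pointwise by the supremum of the density of $\uwithnoise$ on the region (equivalently, $\sqrt[4]{n}$ times the standard Gaussian density on $[\log n,\infty)$), then integrate the resulting constant against the probability measure $\nu\times\nu$. Your write-up is in fact a bit more explicit than the paper's, and your parenthetical identification of $z_0$ with $u_0$ matches the paper's implicit usage in this lemma.
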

\begin{proof}
	The integrand is bounded above by the supremum of the probability density of
	$\uwithnoise$ on the interval 
	\[ \left[ \frac{\log n}{\sqrt[4]{n}}, \infty \right) \] 
	which is the same as the product of $\sqrt[4]{n}$ by the supremum of
	the probability density of $\zmienna$  on the interval 
	\[ \left[ \log n, \infty\right). \] 
	A direct calculation involving the explicit formula for the
	density of the normal distribution concludes the proof.
\end{proof}

\subsubsection{The main contribution to the integral (away from the singularity)}

The numerator in \eqref{eq:energy-find-upper-bound-new} is trivially bounded by
$1$. Consequently, the following result offers an upper bound for the
corresponding part of the integral in \eqref{eq:energy-find-upper-bound-new}.

\begin{lemma}
	\label{lem:close-to-singularity-opposite-sides}
	Let $\mu$ be a probability measure on the real line and $u_0\in \R$. We denote
\[z_0= F_\mu(u_0), \qquad 
\Force_0 = \Force_{\mu}(u_0).\]

For any $\epsilon > 0$ and any open interval $I$ containing $z_0$, there exists
an open neighborhood $G$ of $\mu$ in the topology of weak convergence of
probability measures such that for any probability measure $\nu \in G$ the
following bound holds:
	\begin{equation}
		\label{eq:weak}
		\iint\limits_{\substack{\{(z_1,z_2): \\ z_1 \leq z_0 \leq z_2, \\ (z_1,z_2)\notin I \times I\}}}
		\frac{1}{z_2 - z_1} \,\mathrm{d}\nu(z_1) \,\mathrm{d}\nu(z_2) < \Force_0 + \epsilon.
	\end{equation}
\end{lemma}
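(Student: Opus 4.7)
The plan is to exploit the fact that the excluded region $I \times I$ keeps the integration domain bounded away from the singularity of the kernel, so that the integrand is bounded; the conclusion then follows from a standard approximation and weak-convergence argument. Set
\[ R := \{(z_1,z_2) : z_1 \leq z_0 \leq z_2\} \setminus (I \times I) \]
and $\delta := \min(z_0 - \inf I, \sup I - z_0) > 0$. I would first observe that any point of $R$ must have at least one coordinate outside $I$; combined with $z_1 \leq z_0 \leq z_2$, this forces either $z_1 \leq \inf I$ or $z_2 \geq \sup I$, yielding $z_2 - z_1 \geq \delta$. Consequently the integrand $\phi(z_1,z_2) := \mathbf{1}_R(z_1,z_2)/(z_2-z_1)$ is bounded by $1/\delta$ and tends to zero as $|z_1| + |z_2| \to \infty$ on $R$. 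Furthermore, $R$ is contained (up to a $\mu\times\mu$-null set in the generic case $\mu(\{z_0\}) = 0$) in the domain of integration $\{z_1 \leq z_0 < z_2\}$ appearing in the definition of $\Force_0$, so $\int_R \phi \, d\mu \times \mu \leq \Force_0$.

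The key step is to dominate $\phi$ from above by a bounded continuous function. For this I would construct, for a small parameter $\eta > 0$, a continuous cutoff $\chi_\eta \colon \R^2 \to [0,1]$ with $\chi_\eta = 1$ on $R$ and $\chi_\eta = 0$ outside a slight enlargement $R_\eta \supset R$ designed so that $z_2 - z_1 \geq \delta/2$ still holds throughout $R_\eta$. Setting $\psi_\eta := \chi_\eta/(z_2-z_1)$, and extending by zero off $\{\chi_\eta > 0\}$, yields a bounded continuous function satisfying $\psi_\eta \geq \phi$ and
\[ \int (\psi_\eta - \phi)\, d\mu \times \mu \leq \tfrac{2}{\delta} \, (\mu\times\mu)(R_\eta \setminus R). \]
The excess mass can be made smaller than $\epsilon/2$ by taking $\eta$ small enough, after a slight perturbation of the enlargement to avoid the countably many atoms of $\mu$ on the boundary of $R$. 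Therefore $\int \psi_\eta \, d\mu\times\mu \leq \Force_0 + \epsilon/2$.

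Finally, weak convergence $\nu \to \mu$ on $\R$ implies weak convergence $\nu \times \nu \to \mu \times \mu$ on $\R^2$, so the map $\nu \mapsto \int \psi_\eta\, d\nu\times\nu$ is continuous at $\mu$ in the weak topology. Consequently there is an open weak neighborhood $G$ of $\mu$ such that for every $\nu \in G$,
\[ \iint_R \frac{1}{z_2 - z_1}\, d\nu(z_1)\, d\nu(z_2) \leq \int \psi_\eta\, d\nu\times\nu < \int \psi_\eta\, d\mu\times\mu + \tfrac{\epsilon}{2} \leq \Force_0 + \epsilon, \]
which is the desired bound. The main technical nuisance is the careful design of $\chi_\eta$ so that its enlargement $R_\eta$ avoids potential atoms of $\mu$ at $z_0$ or at the endpoints of $I$; this is a routine matter since atoms form a countable set, and the relevant parameters can be chosen in general position.
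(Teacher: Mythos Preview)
Your argument is correct and follows essentially the same approach as the paper: both proofs observe that the integrand is bounded on the closed region $R$ (since excluding $I\times I$ keeps $z_2-z_1$ bounded below), dominate it from above by a bounded continuous function whose $\mu\times\mu$-integral is within $\epsilon/2$ of $\Force_0$, and then invoke continuity of $\nu\mapsto\iint f\,d\nu\,d\nu$ under weak convergence. The only cosmetic difference is that the paper appeals to the general fact that a bounded upper-semicontinuous function is the decreasing limit of continuous functions and picks one term of that sequence, whereas you build the dominating function $\psi_\eta$ by hand via a cutoff; your version is slightly more explicit and also flags the atom issue at $z_0$, which the paper passes over.
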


\begin{proof}
The integral on the left-hand side of \eqref{eq:weak} involves a bounded,
positive integrand integrated over a closed set.  This integral can be rewritten
as an integral over $\mathbb{R}^2$ of the original integrand
\[ \R^2\ni (z_1,z_2) \mapsto \frac{1}{z_2-z_1} \]
multiplied by an
indicator function of a closed subset of $\mathbb{R}^2$. This product is the
pointwise limit of a weakly decreasing sequence $(f_n)$ of continuous,
uniformly bounded functions $f_n: \mathbb{R}^2 \to \mathbb{R}_+$. Consequently,
the left-hand side of \eqref{eq:weak} is bounded above by
	\begin{equation}
		\label{eq:smooth-weak}
		\iint_{\mathbb{R}^2} f_n(z_1, z_2) \,\mathrm{d}\nu(z_1) \,\mathrm{d}\nu(z_2).
	\end{equation}
	
	Let $n_0 \geq 1$ be a fixed integer to be specified later. Consider the
integral \eqref{eq:smooth-weak} for $n := n_0$ as $\nu$ converges to $\mu$ in
the weak topology of probability measures. Since the integral is a continuous
functional over the integrands, there exists a neighborhood $G$ of $\mu$ such
that for any $\nu \in G$,
	\begin{equation}
		\label{eq:smooth-weak2}
		\iint_{\mathbb{R}^2} f_{n_0}(z_1, z_2) \,\mathrm{d}\nu(z_1) \,\mathrm{d}\nu(z_2) \leq 
		\frac{\epsilon}{2} + \iint_{\mathbb{R}^2} f_{n_0}(z_1, z_2) \,\mathrm{d}\mu(z_1) \,\mathrm{d}\mu(z_2).
	\end{equation}
	
	Moreover,
	\begin{equation*}
		\lim_{n\to\infty} \iint_{\mathbb{R}^2} f_n(z_1, z_2) \,\mathrm{d}\mu(z_1) \,\mathrm{d}\mu(z_2) =
		\iint\limits_{\substack{\{(z_1,z_2): \\ z_1 \leq z_0 \leq z_2, \\ (z_1,z_2)\notin I \times I\}}}
		\frac{1}{z_2-z_1} \,\mathrm{d}\mu(z_1) \,\mathrm{d}\mu(z_2) \leq \Force_0,
	\end{equation*}
	allowing us to choose $n_0$ such that
	\begin{equation}
		\label{eq:smooth-weak3}
		\iint_{\mathbb{R}^2} f_{n_0}(z_1, z_2) \,\mathrm{d}\mu(z_1) \,\mathrm{d}\mu(z_2) < 
										\Force_0 + \frac{\epsilon}{2}.
	\end{equation}
	
	Combining \eqref{eq:smooth-weak}, \eqref{eq:smooth-weak2}, and
\eqref{eq:smooth-weak3} completes the proof.
\end{proof}

\subsubsection{One variable very close to $z_0$, the other far away}

\begin{lemma}
	\label{lem:away-from-singularity-on-the-same-side}
	Let $\mu$ be as in \cref{prop:energy-via-cdf}.	
	For each $\epsilon>0$ there exists $n_0$ and an open neighborhood $G$ of the
measure $\mu$ such that for any $n\geq n_0$ and for any
probability measure $\nu\in G$ on the real line, the following bound holds true:
\begin{equation}
	\iint\limits_{\substack{\{(z_1,z_2):  \\
			z_1 < z_2, \\ 
			|z_1-z_0| < n^{-\frac{1}{4}} \log n , 
			\\ z_2\notin I
			\}
		}}
	\frac{1}{z_2- z_1 }
	\dif \nu(z_1) \dif \nu(z_2) 
	\leq \epsilon.
\end{equation}
\end{lemma}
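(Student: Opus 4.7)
The approach is to exploit the fact that the constraints on the integration domain force $z_1$ and $z_2$ to be macroscopically separated for large $n$. Once the kernel $\tfrac{1}{z_2-z_1}$ is uniformly bounded, the estimate reduces to controlling the $\nu$-mass of a shrinking neighborhood of $z_0$, which can be done via the Portmanteau theorem together with the boundedness of the density of $\mu$.

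To set this up, I would fix $\delta>0$ small enough that $[z_0-\delta,\, z_0+\delta]\subseteq I\cap[a,b]$. The first key step is to argue that for all sufficiently large $n$, the portion of the integration domain with $z_2 < z_0$ is empty: in this case $z_2\notin I$ and $z_2<z_0$ force $z_2\leq z_0-\delta$, while the hypothesis $|z_1-z_0|< n^{-1/4}\log n$ gives $z_1 > z_0 - n^{-1/4}\log n > z_0-\delta$ for large $n$, contradicting $z_1<z_2$. On the remaining region one has $z_2\geq z_0+\delta$ and $z_1\leq z_0+n^{-1/4}\log n$, so $z_2-z_1\geq \delta/2$, making the integrand uniformly bounded by $2/\delta$.

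The integral is then dominated by $\frac{2}{\delta}\,\nu(J_n)$ with $J_n:=[z_0-n^{-1/4}\log n,\, z_0+n^{-1/4}\log n]$. To control this uniformly over $\nu$ in a weak neighborhood of $\mu$, I would pick a fixed $\eta_0>0$ with $[z_0-\eta_0, z_0+\eta_0]\subseteq [a,b]$ and $\frac{2}{\delta}(2 f_{\max}+1)\,\eta_0 < \epsilon$; since the density of $\mu$ is bounded above on $[a,b]$ by some $f_{\max}$, the closed set $K:=[z_0-\eta_0, z_0+\eta_0]$ satisfies $\mu(K)\leq 2 f_{\max}\eta_0$. Defining
\[
G := \bigl\{\nu : \nu(K) < \mu(K) + \eta_0\bigr\},
\]
which is open in the weak topology because $K$ is closed and $\nu\mapsto\nu(K)$ is upper semicontinuous (Portmanteau theorem), any $n_0$ with $n_0^{-1/4}\log n_0\leq \eta_0$ then works: for $n\geq n_0$ and $\nu\in G$ one has $J_n\subseteq K$, hence $\frac{2}{\delta}\,\nu(J_n)\leq \frac{2}{\delta}(2 f_{\max}+1)\eta_0 < \epsilon$.

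The main conceptual subtlety is the mismatch between the microscopic scale $n^{-1/4}\log n$ of $J_n$ and the fact that weak convergence is insensitive to such small scales; this is why it is essential to enlarge $J_n$ to the macroscopic closed set $K$ before invoking Portmanteau. Apart from this, the estimate is routine.
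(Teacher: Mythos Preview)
Your proof is correct and follows essentially the same approach as the paper's: bound the integrand uniformly (using the macroscopic separation between $z_1$ near $z_0\in I$ and $z_2\notin I$), then control the $\nu$-measure of the shrinking neighborhood of $z_0$ via a Portmanteau-type argument. Your version is more explicit---spelling out the exclusion of the $z_2<z_0$ region, the upper-semicontinuity step, and using the bounded-density hypothesis quantitatively where the paper simply picks $c$ with $\mu([z_0,z_0+c])<\epsilon'$---but the strategy is identical.
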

\begin{proof}
Note that when $n$ is sufficiently large, the integrand is uniformly bounded
above by a constant which is independent of $n$. Our strategy is to find
the neighborhood $G$ in such a way that the measure of the integration area is
smaller than an arbitrary $\epsilon'>0$.

Let $c>0$ be sufficiently small that
\[ \mu\left( \left[ z_0,\  z_0+c\right] \right) < \epsilon'. \]
There exists a neighborhood $G$ of $\mu$ such that for any $\nu\in G$ the
following bound is valid:
\[ \nu\left( \left[ z_0,\  z_0+c\right] \right) < \epsilon'. \]

When $n$ is sufficiently large then
\[ \frac{\log n}{\sqrt[4]{n}} < c \]
which completes the proof. 
\end{proof}

\subsubsection{Conclusion of the proof of \cref{prop:energy-via-cdf}}
\label{sec:conclusion-prop-energy-via-cdf}

Let $\epsilon'>0$ be a positive constant which will be fixed later. For this
value of $\epsilon:=\epsilon'$ we apply \cref{lem:close-to-singularity-opposite-sides,lem:away-from-singularity-on-the-same-side}
We define $G$ to be the open neighborhood of the measure $\mu$ defined as
the intersection of the neighborhoods provided by these lemmas.

By summing the contributions of all of the above cases it follows that there
exists a sequence $C_n$ which converges to zero and $n_0$ such that for any
$n\geq n_0$ and any measure $\nu\in G$ which fulfills the local refinement
condition \eqref{eq:local-refinement-cdf} the following bound is true:
\[ \E \left[\Force_{\nu}(\uwithnoise)\right] \leq  \Force_0 + 2 \epsilon' + f_{\max}^2 \ |J| + C_n. \]
Since $\epsilon'>0$ as well as the length of the interval $J$ can be chosen to
be arbitrarily small, \cref{prop:energy-via-cdf} follows.

\subsection{\cref{prop:energy-via-cdf} implies the upper bound in \assum{item:energy2}}
\label{sec:B2-upper-bound}

For an integer $n \geq 1$, define the following random variables:
\begin{align*}
	\Delta_n &= \ForceRegularized{\frac{1}{\sqrt{n}}}{\mu_{\omega_n}}(\uwithnoise) - \Force_0, \\
	\Delta_n^+ &= \max(\Delta_n, 0), \\
	\Delta_n^- &= \max(-\Delta_n, 0),
\end{align*}
such that $\Delta_n = \Delta_n^+ - \Delta_n^-$ and $\Delta_n^-\geq 0$, as well
as $\Delta_n^+ \geq 0$.

Fix $\epsilon > 0$ and $\eta > 0$. Our goal is to show that
\begin{equation}
	\label{eq:interaction-upperbound}
	\limsup_{n\to\infty} \Pro \left[ \Delta_n > \epsilon \right] =
	\limsup_{n\to\infty} \Pro \left[ \Delta_n^+ > \epsilon \right]
	< \eta.
\end{equation}
In the pursuit of this goal we may condition on events whose probabilities
converge to $1$, preserving the validity of our proof while offering greater
analytical flexibility. We will perform such a conditioning twice. We start by
setting $\delta > 0$ and $G$ to be the parameters provided by
\cref{prop:energy-via-cdf} for $\epsilon' := \frac{1}{2} \epsilon \eta$.

Firstly, recall that the sequence of random measures $(\mu_{\omega_n})$
converges to $\mu_{\Omega}$ in the weak topology of probability measures, in
probability. (For a detailed discussion of this convergence, refer to the
paragraph preceding Equation~\eqref{eq:integral-convergence}.) It follows that
we can condition on the event that the global proximity condition
\eqref{eq:global-proximity} is satisfied for $\nu:=\omega_n$:
\[ \mu_{\omega_n} \in G. \]

Secondly, by \eqref{eq:modulus}, we can additionally condition on the event that
the local refinement condition \eqref{eq:local-refinement-cdf} holds true for
$\nu:=\mu_{\omega_n}$.

Under these two conditionings, \cref{prop:energy-via-cdf} is applicable for
$\nu:=\omega_n$ and all sufficiently large values of $n$, thus by applying the
expected value over $\omega_n$ it follows that
\begin{equation}
	\label{eq:limsup-delta}
	\limsup_{n\to\infty} \E \Delta_n \leq 
	\limsup_{n\to\infty} \E \left[\Force_{\mu_{\omega_n}}(\uwithnoise)\right] - \Force_0
	< \epsilon' = \frac{1}{2}\epsilon \eta.
\end{equation}

The random variables $\Delta_n^- \geq 0$ are uniformly bounded from above by $\Force_0$. Thus,
\begin{align*}
	\Delta_n^- 
	&\leq 
	\frac{1}{2} \epsilon \eta + 
	      \Force_0\  \mathbbm{1}_{\left[ \Delta_n^- > \frac{1}{2} \epsilon \eta \right]}, \\
	\intertext{and}
	\E \Delta_n^- 
	&\leq  
	\frac{1}{2} \epsilon \eta+ \Pro\left[ \Delta_n^- > \frac{1}{2} \epsilon \eta \right] \Force_0 .
\end{align*}
By applying \eqref{eq:interaction-lowerbound} with $\epsilon' = \frac{1}{2}
\epsilon \eta$ it follows that the second  term on the right-hand side converges
to zero. Consequently,
\begin{equation}
	\label{eq:limsup-delta-minus}
	\limsup_{n\to\infty} \E \Delta_n^- \leq \frac{1}{2} \epsilon\eta.
\end{equation}

By Markov's inequality,
\[
\Pro\left[ \Delta_n > \epsilon \right] = 
\Pro\left[ \Delta_n^+ > \epsilon \right] \leq 
\frac{\E \Delta_n^+}{\epsilon} 
= \frac{\E \Delta_n + \E \Delta_n^-}{\epsilon}.
\]
Combining \eqref{eq:limsup-delta} and \eqref{eq:limsup-delta-minus}, we find
that the right-hand side is asymptotically bounded by $\eta$, thus establishing
\eqref{eq:interaction-upperbound} and completing the proof of
\assum{item:energy2}.

\subsection{Towards \assum{item:regular2}. Cauchy transform in a random point is
	not too singular}

Recall that the modified Cauchy transform is defined in
\eqref{eq:cauchy-abs}. This quantity \( \cauchy^+_{\rho_n}(z) \) is
particularly effective in scenarios where \( \rho_n \) is part of a sequence of
probability measures supported on an interval of the form \[
\left[-\Theta\left(\sqrt{n}\right),\ \Theta\left(\sqrt{n}\right)\right], \] which
expands proportionally to \( \sqrt{n} \), and where the argument \( z =
\Theta(\sqrt{n}) \) scales in a similar manner.

In some cases, it is advantageous to dilate the \( n \)-th element of the
sequence \( (\rho_n) \) by a factor of \( \frac{1}{\sqrt{n}} \), resulting in a
new sequence of probability measures supported on a common compact interval. The
modified Cauchy transform \( \cauchy^+_{\rho_n}(z) \) for the original measure
is related to the following expression for the dilated measure: 
\begin{equation}
	\label{eq:cauchy-rescaled}
	\cauchy^+_{\rho_n}\left( \sqrt{n}\ u \right) = \frac{1}{\sqrt{n}} \int \frac{1}{|x - u|
	+ n^{-\frac{1}{2}}} \, \dif D_{\frac{1}{\sqrt{n}}, \rho_n}(x), 
\end{equation}
where \( D_{\frac{1}{\sqrt{n}}, \rho_n} \) denotes the dilation of the measure
\( \rho_n \) by the factor \( \frac{1}{\sqrt{n}} \).

\smallskip

The following result concerns the integral on the right-hand side of
\eqref{eq:cauchy-rescaled} and is crucial for establishing
\assum{item:regular2}. Heuristically, even if the modified Cauchy transform
exhibits locally very large values, its behavior becomes significantly more
regular when evaluated at a point perturbed by random noise.

\begin{lemma}
	\label{lem:Cauchy-good-control}
Let $(\nu_n)$ be a sequence of random probability measures on the real line, and
let $\mu$ be a probability measure on the same line. Assume there exist real
numbers $a < b$ such that the measure $\mu$ restricted to the interval $[a, b]$
is absolutely continuous, with a bounded density.

Furthermore, assume that the \CDFs of the measures $(\nu_n)$ converge
to the \CDF of $\mu$ at the prescribed speed:
\begin{equation}
	\label{eq:assumption-cdf-converges-fast}
	 \sup_{u \in [a,b] }  \sqrt[4]{n} \
\left| F_{\nu_n}(u) - F_\mu(u) \right| 
\xrightarrow[n\to\infty]{P} 0. 
\end{equation}

Let $\zmienna$ and $\uwithnoise$ be as in the statement of \cref{thm:CLT-trans}.
Then, for any $u_0\in (a,b)$, we have that
\begin{equation}
	\label{eq:towards-b3}
	 \frac{1}{(\log n)^2} 
\int \frac{1}{|x- \uwithnoise |+ n^{-\frac{1}{2} }} \dif \nu_n(x)
 \xrightarrow[n\to\infty]{P} 0. 
\end{equation} 
\end{lemma}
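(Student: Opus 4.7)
The plan is to reduce the lemma to a conditional Markov argument over the Gaussian noise. Conditional on $\nu_n$, Fubini yields
\[
	\E_\zmienna\!\left[ \int \frac{\dif\nu_n(x)}{|x - \uwithnoise| + n^{-1/2}} \right]
	= \int K_n(x)\, \dif\nu_n(x),
	\qquad K_n(x) := \int \frac{p_n(u)}{|x - u| + n^{-1/2}}\, \dif u,
\]
where $p_n$ denotes the density of $\uwithnoise$. Granted a bound $\int K_n\, \dif\nu_n = O_P(\log n)$, the lemma would follow by a truncation and conditional Markov argument: for fixed $\delta, \eta > 0$, choose $M$ so that the event $A_M = \{ \int K_n\, \dif\nu_n \leq M \log n \}$ has probability at least $1-\eta$ for all large $n$; on $A_M$, conditional Markov gives $\Pro_\zmienna[I_n(\uwithnoise) > \delta (\log n)^2 \mid \nu_n] \leq M/(\delta \log n) \to 0$, which together with arbitrariness of $\eta$ yields the desired convergence in probability.

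To bound $K_n$, I would perform the changes of variable $w = n^{1/4}(u-u_0)$ and $y = n^{1/4}(x-u_0)$, which give $K_n(x) = n^{1/4} H_n(y)$ where $H_n(y) = \int \varphi(w) / (|y-w| + n^{-1/4})\, \dif w$ is the convolution of the standard Gaussian density with a regularized reciprocal kernel. Elementary computations then yield $H_n(y) = O(\log n)$ uniformly in $y$ and $H_n(y) = O(1/|y|)$ for $|y| \geq 2$; correspondingly, $K_n(x) \leq C n^{1/4} \log n$ everywhere and $K_n(x) \leq C/|x-u_0|$ on $|x-u_0| \geq 2 n^{-1/4}$.

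I would then split $\int K_n\, \dif\nu_n$ into a near region $|x-u_0| \leq 2 n^{-1/4}$ and a far region $|x-u_0| > 2 n^{-1/4}$. For the near region, the bounded density of $\mu$ on $[a,b]$ together with \eqref{eq:assumption-cdf-converges-fast} yield $\nu_n([u_0 - 2 n^{-1/4}, u_0 + 2 n^{-1/4}]) \leq 4 n^{-1/4} f_{\max} + 2 \sup_{u \in [a,b]} |F_{\nu_n}(u) - F_\mu(u)| = O_P(n^{-1/4})$, so this contribution is $(C n^{1/4} \log n) \cdot O_P(n^{-1/4}) = O_P(\log n)$. For the far region I would write $\dif \nu_n = \dif \mu + \dif(\nu_n - \mu)$: the $\mu$-part yields $O(\log n)$ by the bounded density and the elementary estimate $\int_{2 n^{-1/4}}^c \dif x / x = O(\log n)$, while the signed-measure part is handled via integration by parts on each side of $u_0$. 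With $h(x) = 1/|x-u_0|$, the boundary contribution at $|x-u_0| = 2 n^{-1/4}$ is bounded by $O(n^{1/4}) \cdot o_P(n^{-1/4}) = o_P(1)$, the integrated derivative term by $o_P(n^{-1/4}) \cdot \int |h'|\,\dif x = o_P(n^{-1/4}) \cdot O(n^{1/4}) = o_P(1)$, and the remaining part outside $[a,b]$ is trivially $O(1)$.

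The main obstacle is the delicate matching at the boundary scale $|x-u_0| \sim n^{-1/4}$: the function $1/|x-u_0|$ grows precisely to order $n^{1/4}$ there, while the CDF hypothesis supplies an uncertainty of order $o_P(n^{-1/4})$ that cancels this growth exactly to leave an $o_P(1)$ contribution. This tight balance reflects the sharp exponent $\tfrac14$ appearing both in the CDF rate \eqref{eq:assumption-cdf-converges-fast} and in the Gaussian scale of $\uwithnoise$, and is the one step genuinely requiring the full strength of the hypothesis rather than any soft continuity argument.
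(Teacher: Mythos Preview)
There is a genuine gap in your bound on $H_n$. You claim $H_n(y)=O(1/|y|)$ for $|y|\ge 2$, but this fails uniformly in $n$: already at $y=2$,
\[
H_n(2)\;\ge\;\int_{1}^{3}\frac{\varphi(w)}{|2-w|+n^{-1/4}}\,dw
\;\ge\;\varphi(3)\int_{-1}^{1}\frac{dt}{|t|+n^{-1/4}}
\;\sim\;\tfrac12\,\varphi(3)\,\log n\;\longrightarrow\;\infty.
\]
The convolution always sees the singularity of $t\mapsto 1/(|t|+n^{-1/4})$ at $t=0$, damped only by the Gaussian tail $\varphi(y)$; the correct uniform estimate is $H_n(y)\le C\log n/(1+|y|)$, hence $K_n(x)\le C\log n/|x-u_0|$ on $|x-u_0|\ge 2n^{-1/4}$. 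Feeding this into your far-region integration-by-parts step gives
\[
\int_{|x-u_0|>2n^{-1/4}} K_n(x)\,d\nu_n(x)\;\le\;C\log n\int_{|x-u_0|>2n^{-1/4}}\frac{d\nu_n(x)}{|x-u_0|}
\;=\;O_P\big((\log n)^2\big),
\]
which after division by $(\log n)^2$ leaves only $O_P(1)$, not the required $o_P(1)$.

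The repair is straightforward: enlarge the near region to $|x-u_0|\le C\sqrt{\log n}\,n^{-1/4}$. On that set the crude bound $K_n\le Cn^{1/4}\log n$ together with $\nu_n(\text{near})=O_P(\sqrt{\log n}\,n^{-1/4})$ yields $O_P((\log n)^{3/2})$; outside it one has $|y|\gtrsim\sqrt{\log n}$, the Gaussian tail term $\varphi(|y|-1)\log n$ is genuinely $O(1/|y|)$, and your integration-by-parts argument goes through to give $O_P(\log n)$. The paper sidesteps the whole issue by \emph{truncating} the Gaussian first: it conditions on $|\zmienna|\le A$, so that on the region $|x-u_0|>2An^{-1/4}$ one has the pointwise inequality $|x-\uwithnoise|\ge\tfrac12|x-u_0|$ with no averaging and no spurious $\log n$, and the expectation over $\zmienna$ is taken only on the remaining window $|x-u_0|\le 2An^{-1/4}$, exactly as in your near-region step.
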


The remaining part of this section is devoted to the proof.

\subsubsection{Conditioning}

Our general strategy is to prove that the probability of 
the left-hand side of \eqref{eq:towards-b3} exceeding some constant $\epsilon > 0$
converges to zero.
For any constant $A > 0$, this probability is bounded above by the sum:
	\[ \Pro\Big[ |\zmienna| > A \Big] + 
	\Pro\giventhat*{
		\int \frac{1}{|x- \uwithnoise |+ n^{-\frac{1}{2} }} \dif \nu_n(x) > \epsilon\ (\log n)^2 
		}{
		\ |\zmienna| \leq A 
		}. \]
We can choose $A>0$ sufficiently large such that the first summand becomes
arbitrarily small. In the remaining part of the proof we will show that the
conditional probability in the second summand converges to zero as $n\to\infty$.
Given our conditional setup, we can assume in the following that $\zmienna$ is a
random variable supported on the interval $[-A, A]$, obtained by conditioning
the standard normal random variable.

By employing a similar conditioning argument, we can invoke
\eqref{eq:assumption-cdf-converges-fast} to assume that
\begin{equation}
	\label{eq:cdfs-are-close}
	 	 \sup_{u \in [a,b] }  \sqrt[4]{n} \
\left| F_{\nu_n}(u) - F_\mu(u) \right| < 1 
\end{equation}
holds with probability $1$ for all sufficiently large values of $n$.

\subsubsection{Split the domain of integration}
\label{sec:split-the-domain}

We will split the domain of integration in  
\begin{equation}
	\label{eq:reg-cauchy}
\int_{\R} 
\frac{1}{|x- \uwithnoise| + n^{-\frac{1}{2}}} \dif \nu_n(x) 
\end{equation}
into three parts, and investigate the asymptotic behavior of each part separately.

\paragraph{Easy deterministic estimate, very far from $u_0$} 

We start with the first integral which
contributes to \eqref{eq:reg-cauchy}, namely
\[
	\int_{ \R\setminus [a,b]  } 
\frac{1}{|x- \uwithnoise| +n^{-\frac{1}{2}}} \dif \nu_n(x). 
\]
Let $c>0$ be a positive constant satisfying
\[ [ u_0 - c , \ u_0 + c ] \subset (a, b). \]
Given that the random perturbation $\zmienna$ is bounded, there exists an $n_0
\in \N$ such that for all $n \geq n_0$, the following inequality holds
uniformly for any $x \in \R\setminus \left[a, b\right]$ contributing to the integral:
\[ |x- \uwithnoise| \geq c. \]
Consequently, we can establish an almost sure upper bound for the integral:
\begin{equation}
	\label{eq:cauchy-integral-bound-1}
		\int_{ \R\setminus [a,b]} 
\frac{1}{|x- \uwithnoise| +n^{-\frac{1}{2}}} \dif \nu_n(x) \leq   \frac{1}{c}. 
\end{equation}
This bound holds with probability $1$ for all $n\geq n_0$.

\paragraph{Deterministic estimate, intermediate distance to $u_0$} 

We consider now the second integral which contributes to \eqref{eq:reg-cauchy}.
In this section we will show that:
\begin{equation}
	\label{eq:cauchy-integral-bound-2}
\int_{
	 x\in [a,b] \text{ and }
	|x- u_0| > 2 A n^{-\frac{1}{4}}
}
\ \ \
\frac{1}{|x-\uwithnoise| +n^{-\frac{1}{2}}} \dif \nu_n(x) = O\left( \log n \right).
\end{equation}

\smallskip

For any $x$ contributing to the integral \eqref{eq:cauchy-integral-bound-2} we
have $|x- u_0| > 2 A n^{-\frac{1}{4}}$; on the other hand $|\uwithnoise- u_0|
\leq  A n^{-\frac{1}{4}}$; it follows that
\[ |x -  \uwithnoise | \geq \left( |x-u_0| \right) - \left( |\uwithnoise - u_0| \right) 
\geq  
 \frac{| x-  u_0|}{2}. \]
It follows that \eqref{eq:cauchy-integral-bound-2} is bounded from above by
\begin{equation}
	\label{eq:upper-bound}
	2 \int_{
	x\in [a,b] \text{ and }
	|x- u_0| > 2 A n^{-\frac{1}{4}}
}
\ \ \
\frac{1}{|x- u_0| } \dif \nu_n(x). 
\end{equation}

The integration area  encompasses two distinct segments: one to the left 
and one
to the right of $u_0$. In the following discussion, we will focus on the segment
to the right. The integral over the left segment can be derived using a similar
approach. 

We will utilize integration by parts, as outlined below:
\begin{multline*}
\int_{ u_0 + 2 A n^{-\frac{1}{4}}}^{ b }
\frac{1}{x-u_0 } \dif \nu_n(x)= 
\\
\left[ \frac{1}{x- u_0 } F_{\nu_n}(x) \right]_{ u_0 + 2  A n^{-\frac{1}{4}}}^{b } +
\int_{ u_0 + 2  A n^{-\frac{1}{4}}}^{ b }\ \ 
\frac{1}{\left( x-u_0 \right)^2}  F_{\nu_n}(x) \dif x.
\end{multline*}
An equivalent formula can be established for the corresponding integral over the
same interval with respect to the measure $\mu$. By subtracting these two
formulas, and using the bound \eqref{eq:cdfs-are-close}, we can derive:
\begin{multline}
\label{eq:by-parts}
	\left| 
	\int_{ u_0 + 2  A n^{-\frac{1}{4}}}^{ b }
	\frac{1}{x- u_0 } \dif \nu_n(x)
	-
	\int_{ u_0 + 2  A n^{-\frac{1}{4}}}^{ b }
\frac{1}{x- u_0 } \dif \mu(x)
	\right|
	\leq 
\\
\shoveleft{\frac{|F_{\nu_n}(b)- F_\mu(b)| }{b-u_0} +
\frac{
	\left|
	       F_{\nu_n}\left(  u_0 + 2  A n^{-\frac{1}{4}} \right)- 
	                  F_\mu\left(  u_0 + 2  A n^{-\frac{1}{4}} \right)
	\right| 
	}{ 
	  2  A n^{-\frac{1}{4}}
	  } +} \\
\int_{ u_0 + 2  A n^{-\frac{1}{4}}}^{ b }\ \ 
\frac{|F_{\nu_n}(x)- F_{\mu}(x)|}{\left( x-u_0 \right)^2}   \dif x =  O(1)
\end{multline}
in the sense that there is a universal, deterministic constant which is an upper
bound for the right-hand side.

We denote by $C$ any upper bound on the density of the measure $\mu$ on the
interval $[a,b]$. The integral which is the subtrahend on the left-hand side of
\eqref{eq:by-parts} can be bounded as follows:
\[
	\int_{ u_0 + 2  A n^{-\frac{1}{4}}}^{ b }
\frac{1}{x- u_0 } \dif \mu(x) \leq 
C \int_{ u_0 + 2  A n^{-\frac{1}{4}}}^{ b }
\frac{1}{x- u_0 } \dif x = O\left( \log n \right).
\]

\smallskip

The preceding considerations establish that the random integral
\eqref{eq:upper-bound} possesses a deterministic upper bound of \( O\left( \log
n \right) \). 
As a direct consequence, the claimed upper bound in
\eqref{eq:cauchy-integral-bound-2} is also valid.

\paragraph{Probabilistic estimate, very close to $u_0$}

Consider the interval
\[ I_n= \left[ u_0 - \frac{2 A}{\sqrt[4]{n}}, \  u_0 + \frac{2 A}{\sqrt[4]{n}} \right]. \]
Our goal in this section is to show that
\begin{equation}
	\label{eq:cauchy-integral-bound-3}
	\frac{1}{(\log n)^2} 
\int_{I_n}
\frac{1}{|x-\uwithnoise| + n^{-\frac{1}{2}}} \dif \nu_n(x)
\xrightarrow[n\to\infty]{P} 0.
\end{equation}

\smallskip

Let $X_n$ be a random variable generated by the following two-step sampling procedure.
Firstly, we sample the random variables $\nu_n$ and $\zmienna$. In the second step we sample 
$X_n$ 
whose distribution is derived from the conditioning of the measure $\nu_n$
to the interval $I_n$.
In this way the conditional expectation of the integral on the left hand side of
\eqref{eq:cauchy-integral-bound-3}:
\begin{equation}
	\label{eq:integral-near-singularity}
\E \giventhat*{
	\int_{I_n}
	\frac{1}{|x-\uwithnoise| + n^{-\frac{1}{2}}} \dif \nu_n(x) 
}{
	\nu_n
}
= 
\nu_n(I_n)
\ \
 \E\giventhat*{ 
 	\frac{1}{
 	\left| X_n - \uwithnoise \right| +
 	 n^{-\frac{1}{2} }
 	}
 }{
\nu_n
}
\end{equation}
can be expressed in terms of the conditional expected value of a random variable
involving~$X_n$. In the following we shall investigate each of the two factors
on the right hand side. Without loss of generality we may assume that $n$ is
large enough so that $I_n\subset (a,b)$.

\smallskip

By \cref{eq:cdfs-are-close}:
\[ \big| \nu_n(I_n) - \mu(I_n) \big| \leq \frac{2}{\sqrt[4]{n}}. \]
As before, let $C$ denote any upper bound on the density of $\mu$ on the interval $(a,b)$.
It follows that 
\[ \mu(I_n) \leq \frac{4 A C}{\sqrt[4]{n}}. \]
In this way we found a deterministic upper bound
\[ \nu_n(I_n) = O\left( \frac{1}{\sqrt[4]{n}} \right) \]
for the first factor on the right-hand side of \eqref{eq:integral-near-singularity}.

\smallskip

In order to bound the second factor on the right-hand side of
\eqref{eq:integral-near-singularity} we use conditioning over the measure
$\nu_n$. With this conditioning, the second factor on the right-hand side of
\eqref{eq:integral-near-singularity} can be seen as the integral
\begin{equation}
	\label{eq:close-to-singularity}
	\E\left[
		\frac{1}{
			\left| X_n - \uwithnoise \right| +
			n^{-\frac{1}{2} }
		}
	\right]
	=
	 \int \frac{1}{|x|+ n^{-\frac{1}{2}} } 
	      \dif \mu_{Z_n}(x)
\end{equation}
over the (conditional) distribution of the random variable 
\[ Z_n= X_n - \uwithnoise= (X_n - u_0) - \frac{1}{\sqrt[4]{n}} \zmienna .\]

Firstly, notice that the absolute value of this random variable is almost surely
bounded by
\[ 
\frac{3 A }{\sqrt[4]{n}}. 
\]

Secondly, since the probability distribution of $Z_n$ is a convolution of:
\begin{itemize}
	\item the distribution of $(X_n - u_0)$, and 
	\item the Gaussian distribution with the standard deviation equal to
$\frac{1}{\sqrt[4]{n}}$,
\end{itemize} 
it follows that the distribution of $Z_n$ is absolutely continuous, with the
density bounded from above by that of a Gaussian measure, which is
$\frac{\sqrt[4]{n}}{\sqrt{2\pi}} $.

By combining these observations it follows that the integral
\eqref{eq:close-to-singularity} is bounded from above by
\[ \frac{2 \sqrt[4]{n}}{\sqrt{2\pi} } 
    	\int_{n^{-\frac{1}{2}}}^{3 A n^{-\frac{1}{4}}} \frac{1}{x} \dif x  =
O\left( \log n \ \sqrt[4]{n} \right). \]

\smallskip

Our analysis thus far shows that the conditional expectation given in
\eqref{eq:integral-near-singularity} is almost surely bounded above by a
deterministic sequence that grows at most like $O(\log n)$. Consequently, the
left-hand side of \eqref{eq:cauchy-integral-bound-3} converges to $0$ in the
$L^1$ norm. This also implies that the convergence in probability stated in
\eqref{eq:cauchy-integral-bound-3} holds true.

\subsubsection{Conclusion of the proof of \cref{lem:Cauchy-good-control}}

Our investigation of the integral \eqref{eq:reg-cauchy} is complete.
Equations
\eqref{eq:cauchy-integral-bound-1},
\eqref{eq:cauchy-integral-bound-2}, and
\eqref{eq:cauchy-integral-bound-3}
establish the proof of \cref{lem:Cauchy-good-control}.

\subsection{Assumptions of \cref{thm:CLT-shape} imply \assum{item:regular2}}

Our strategy is to apply \cref{lem:Cauchy-good-control} for the sequence of
random measures $(\nu_n)$, where \[ \nu_n := \mu_{\omega_n} \] is the transition
measure of the random \conti diagram $\omega_n$, and for $\mu:= \mu_\Omega$
equal to the transition measure of the limit \conti diagram $\Omega$.

We begin by verifying that the assumptions of \cref{lem:Cauchy-good-control} are
satisfied. Recall that we have already established \eqref{eq:modulus}, which
demonstrates that the \CDFs converge at the required rate. Moreover,
\assum{item:transition-measure-omega-is-absolutely-continuous} ensures that the
limit measure possesses a locally bounded density, as required. Thus, we
conclude that \cref{lem:Cauchy-good-control} is applicable.

Upon applying this lemma and utilizing \eqref{eq:cauchy-rescaled}, we obtain the
following result:
\[
	\frac{\nklatki^{\frac{4}{8}} }{(\log n)^2}   
	       \cauchy^+_{\lambda^{(n)}}(\sqrt{n} \ \uwithnoise)  
	\xrightarrow[n\to\infty]{P} 0
	\]
which provides a significantly stronger bound than what is stipulated by
\assum{item:regular2}.

\section{The first tool:  The cumulative function of a tableau}
\label{sec:towards-proof}

\subsection{Cumulative function of a tableau}
\label{sec:cumulative}

In our recent paper \cite{MarciniakSniadyAlternating}, we introduced the 
\emph{cumulative function} for a Poissonized tableau $T$. This function, denoted
as $F_T\colon \R \to [0,1]$, is defined as follows:
\[ 
F_T(u) = \inf\big\{ z \in [0,1] : \uIns(T; z) > u \big\} \qquad \text{for } u \in \R;
\]
where $\uIns(T; z)$ represents the $u$-coordinate of the box $\Ins(T; z)$, as
defined in \cref{sec:definition-of-uIns}. If the infimum is taken over an empty
set, we define $F_T(u) = 1$. For an example, see \cref{fig:french2}.

\begin{figure}
		\begin{tikzpicture}[scale=1.5,rotate=45]

		\coordinate (start) at (-2,-2);
		\coordinate (p1) at ($(start)+(-2,2)$);
		\coordinate (p2) at ($(start)+(2,-2)$);
		
		\coordinate (a) at ($2*(0.70710678118,0.70710678118)$);
		\coordinate (q0) at ($(p1)!(-4,0)!(p2)$);
		\coordinate (q1) at ($(p1)!(-2,0)!(p2)$);
		\coordinate (q2) at ($(p1)!(0,0)!(p2)$);
		\coordinate (q3) at ($(p1)!(3,0)!(p2)$);
		\coordinate (q4) at ($(p1)!(4,0)!(p2)$);

		\draw[decoration={markings,mark=at position 1 with {\arrow[scale=2]{>}}},
		postaction={decorate}] (p1) -- (p2) node[anchor=north west]{$u$};
		
		\draw[decoration={markings,mark=at position 1 with {\arrow[scale=2]{>}}},
		postaction={decorate}] (start) -- ($(start)+1.2*(a)$) node[anchor=west]{$F_T(u)$};

		\foreach \x in {-3, -2, -1, 0, 1, 2, 3} 
		{ \draw ($(p1)!(\x,0)!(p2)$) +(1pt,1pt) -- +(-1pt,-1pt) node[anchor=north] {\x}; }
		
		\foreach \y/\yt in {0.2/0.2,0.4/0.4,0.6/0.6,.8/0.8,1/1} 
		{ \draw[thin,dotted] ($(start)+\y*(a)+(-2,2)$)--($(start)+\y*(a)+(2,-2)$);
			\draw ($(start)+\y*(a)+(-1pt,1pt)$) -- ($(start)+\y*(a)+(1pt,-1pt)$) node [anchor=west]{\tiny$\yt$};
		}

		\draw[black!40, ultra thick] (4.5,0) node [anchor=south]{\textcolor{black}{$x$}} -- (0,0) -- (0,3.5) node[anchor=east]{\textcolor{black}{$y$}};

		\begin{scope}
			\clip (0,0) -- (3,0) -- (3,1) -- (1,1) -- (1,2) -- (0,2);
			\draw (0,0) grid (3,3); 
		\end{scope}

		\draw[ultra thick] (0,0) -- (3,0) -- (3,1) -- (1,1) -- (1,2) -- (0,2) -- cycle;

		\foreach \x/ \y/ \t in { 3/0/{(0, 0.2) }, 1/1/{(0.2,0.7)}, 0/2/{(0.7,1)} }
		{ 
			\fill[blue] (\x,\y) circle (2pt);
			\draw[blue, dashed] (\x,\y) -- ($(p1)!(\x,\y)!(p2)$ );
		}

		\draw[red,ultra thick] ($(q0)+0*(a)$)  -- ($(q1)+0*(a)$) -- ($(q1)+.01*(a)$); 
		\draw[red,ultra thick] ($(q1)+0.2*(a)-.01*(a)$) -- ($(q1)+0.2*(a)$) -- ($(q2)+0.2*(a)$) -- ($(q2)+0.2*(a)+.01*(a)$); 
		\draw[red,ultra thick] ($(q2)+0.7*(a)-.01*(a)$)  --($(q2)+0.7*(a)$)  -- ($(q3)+0.7*(a)$)-- ($(q3)+0.7*(a)+.01*(a)$); 
		\draw[red,ultra thick] ($(q3)+1*(a)-.05*(a)$)   --($(q3)+1*(a)$)   -- ($(q4)+1*(a)$);

		\draw[red,very densely dashed, ultra thick]  ($(q1)+0*(a)$) -- ($(q1)+0.2*(a)$); 
		\draw[red,very densely dashed, ultra thick] ($(q2)+0.2*(a)$) -- ($(q2)+0.7*(a)$); 
		\draw[red,very densely dashed, ultra thick] ($(q3)+0.7*(a)$) -- ($(q3)+1*(a)$);

		\path node () at (0.5,0.5){0.1};
		\path node () at (1.5,0.5){0.2};
		\path node () at (2.5,0.5){0.7};
		\path node () at (0.5,1.5){0.5};

	\end{tikzpicture}
	
	\caption{A Poissonized tableau $T$ shown in Russian coordinates. The red line
	depicts its cumulative function $F_T$. This zigzag line splits the infinite
	rectangle $\R \times [0,1]$ into two regions: the \emph{northwest
		region} and the \emph{southeast region}.}
	
	\label{fig:french2}
\end{figure}
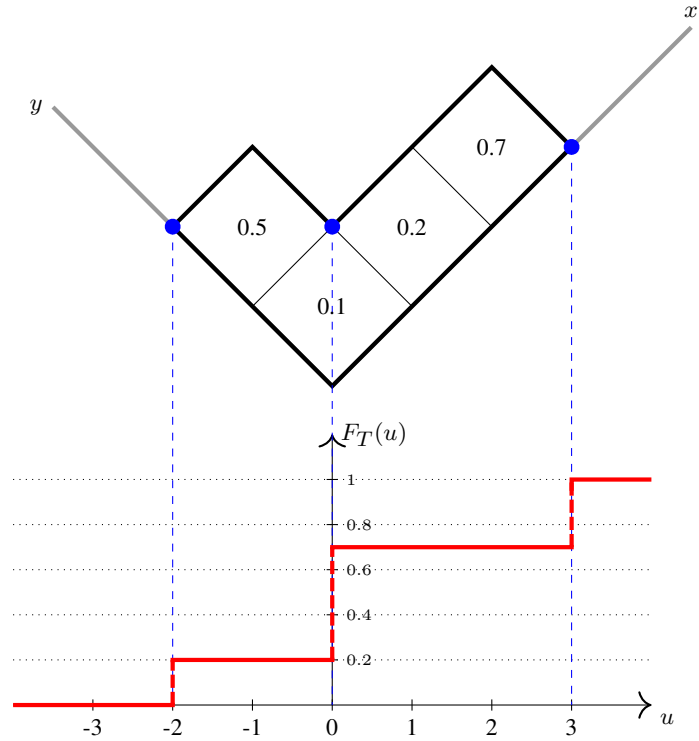

Due to the monotonicity of Schensted row insertion, the following equivalence
holds for any $u \in \R$ and $z \in [0,1)$:
\begin{equation}
	\label{eq:why-ft}
	F_T(u) \leq z \iff \uIns(T; z) \geq u.
\end{equation}
This relationship implies that the cumulative function can be regarded as the
inverse of the insertion function $[0,1] \ni z \mapsto \uIns(T; z) \in
\R$. Specifically, the relationship between $F_T$ and $\uIns(T; \cdot)$
is analogous to that between the \CDF of a measure
and its quantile function. Consequently, the cumulative function is an ideal
tool for proving the main results of this paper, as they all pertain to the
function $\uIns$.

\subsection{Wider perspective: cumulative function of a random tableau}

Consider a Young diagram $\lambda$ (chosen either randomly or deterministically)
and let $T$ be a random Poissonized tableau of shape $\lambda$. An intriguing
problem in this context is the study of the asymptotics of the random function
$u \mapsto F_T(u)$ as the number of boxes in $\lambda$ approaches infinity.

\subsubsection{Plancherel-distributed Poissonized tableaux}

We consider a specific instance of this general problem by examining the random
Plancherel-distributed Poissonized tableau $T$ with $n$ boxes (see
\cref{sec:plancherel-tableau}). Figures \ref{fig:CDF-100} and
\ref{fig:CDF-10000} illustrate the results of a single Monte Carlo simulation,
comparing three key functions:
\begin{enumerate}[label=(\roman*)]
	\item \label{item:i-tab} 
	the cumulative function $F_T$ of the tableau (thick red line);
	
	\item \label{item:i-tran} 
	the \CDF $K_\lambda$ of the transition measure of
	$\lambda$, where $\lambda$ is the shape of $T$ (thin blue line);
	
	\item \label{item:i-as} 
	the \CDF of the rescaled arcsine law (smooth black
	line).
\end{enumerate}

\begin{figure}
	    \begin{tikzpicture}[xscale=0.2,yscale=5]
	\draw[->] (-25,0) -- (25,0);
	\foreach \x in {-20,-15,...,20} { \draw (\x,0.4pt)--(\x,-0.4pt) node[anchor=north]{\tiny $\x$}; };
	\draw[->] (0,0) -- (0,1.2);
	\foreach \y/\yt in {0.2/0.2,0.4/0.4,0.6/0.6,0.8/0.8,1/1} { \draw (-20pt,\y)--(20pt,\y) node[anchor=west]{\tiny $\yt$}; };
	\draw[thick] plot[smooth,thick] file {figures/data/CDF-Wigner100.txt};

	\draw[red,ultra thick] (-26.000000,0) -- (-17.000000,0)-- (-17.000000,0.01); 
	\draw[red,ultra thick] (-17.000000,0.127535) -- (-17.000000,0.147535) -- (-5.000000,0.147535)-- (-5.000000,0.157535); 
	\draw[red,ultra thick] (-5.000000,0.532202) --(-5.000000,0.542202) -- (6.000000,0.542202) -- (6.000000,0.552202);
	\draw[red,ultra thick] (6.000000,0.795199) --(6.000000,0.805199) -- (15.000000,0.805199) -- (15.000000,0.815199);  
	\draw[red,ultra thick] (15.000000,0.917959) --(15.000000,0.927959) -- (18.000000,0.927959)-- (18.000000,0.937959);  
	\draw[red,ultra thick] (18.000000,0.99) --(18.000000,1) -- (26.000000,1);
	;	
	\draw[red,very very densely dashed,ultra thick] (-17.000000,0.01) -- (-17.000000,0.147535); 
	\draw[red,very very densely dashed,ultra thick] (-5.000000,0.147535) -- (-5.000000,0.542202); 
	\draw[red,very very densely dashed,ultra thick] (6.000000,0.542202) -- (6.000000,0.805199); 
	\draw[red,very very densely dashed,ultra thick] (15.000000,0.805199) -- (15.000000,0.927959); 
	\draw[red,very densely dashed,ultra thick] (18.000000,0.917959) -- (18.000000,1);
	
	\draw[blue,thick] (-26.000000,0) -- (-17.000000,0)-- (-17.000000,0.01); 
	\draw[blue,thick] (-17.000000,0.077039) --(-17.000000,0.087039) -- (-14.000000,0.087039)-- (-14.000000,0.097039); 
	\draw[blue,thick] (-14.000000,0.136270) --(-14.000000,0.146270) -- (-10.000000,0.146270)-- (-10.000000,0.156270); 
	\draw[blue,thick] (-10.000000,0.206903) --(-10.000000,0.216903) -- (-5.000000,0.216903)-- (-5.000000,0.226903); 
	\draw[blue,thick] (-5.000000,0.406912) --(-5.000000,0.416912) -- (-2.000000,0.416912)-- (-2.000000,0.426912); 
	\draw[blue,thick] (-2.000000,0.457620) --(-2.000000,0.467620) -- (1.000000,0.467620)-- (1.000000,0.477620);
	\draw[blue,thick] (1.000000,0.621032) --(1.000000,0.631032) -- (6.000000,0.631032)-- (6.000000,0.641032);
	\draw[blue,thick] (6.000000,0.753550) --(6.000000,0.763550) -- (9.000000,0.763550) -- (9.000000,0.773550);
	\draw[blue,thick] (9.000000,0.858327) --(9.000000,0.868327) -- (15.000000,0.868327)-- (15.000000,0.878327);
	\draw[blue,thick] (15.000000,0.909973) --(15.000000,0.919973) -- (18.000000,0.919973)-- (18.000000,0.929973);
	\draw[blue,thick] (18.000000,0.99) --(18.000000,1) -- (26.000000,1);
	
	\draw[blue,middle densely dashed,thick] (-17.000000,0.01) -- (-17.000000,0.087039); 
	\draw[blue,middle densely dashed,thick] (-14.000000,0.087039) -- (-14.000000,0.146270); 
	\draw[blue,middle densely dashed,thick] (-10.000000,0.146270) -- (-10.000000,0.216903); 
	\draw[blue,middle densely dashed,thick] (-5.000000,0.216903) -- (-5.000000,0.416912); 
	\draw[blue,middle densely dashed,thick] (-2.000000,0.416912) -- (-2.000000,0.467620); 
	\draw[blue,middle densely dashed,thick] (1.000000,0.477620) -- (1.000000,0.631032); 
	\draw[blue,middle densely dashed,thick] (6.000000,0.631032) -- (6.000000,0.763550); 
	\draw[blue,middle densely dashed,thick] (9.000000,0.763550) -- (9.000000,0.868327); 
	\draw[blue,middle densely dashed,thick] (15.000000,0.868327) -- (15.000000,0.919973); 
	\draw[blue,middle densely dashed,thick] (18.000000,0.929973) -- (18.000000,1);
\end{tikzpicture}  
	
	\caption{Results of a single Monte Carlo simulation. The insertion tableau $T =
	P(w_1, \dots, w_{\nklatki})$ with $\nklatki = 100$ boxes was generated using
	the RSK algorithm applied to a sequence of \iid $U(0,1)$ random variables.
	This figure displays two functions related to this tableau. The thin blue line
	represents the \CDF $K_\lambda$ of the transition
	measure for the shape $\lambda$ of $T$. The thick red line shows the cumulative
	function $F_T$ of the tableau $T$. For reference, the smooth black line depicts the \CDF
	of the semicircle distribution, supported on the interval
	$\big[-2\sqrt{\nklatki},\ 2\sqrt{\nklatki}\big]$.}
		
	\label{fig:CDF-100}
\end{figure}
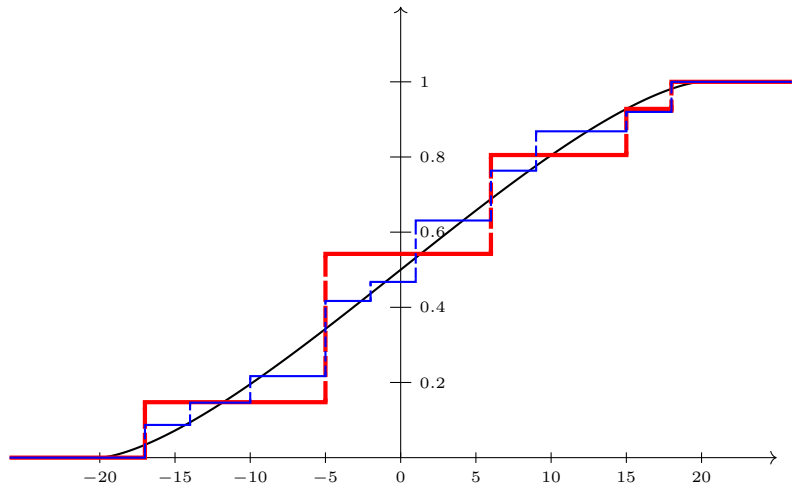

\begin{figure}
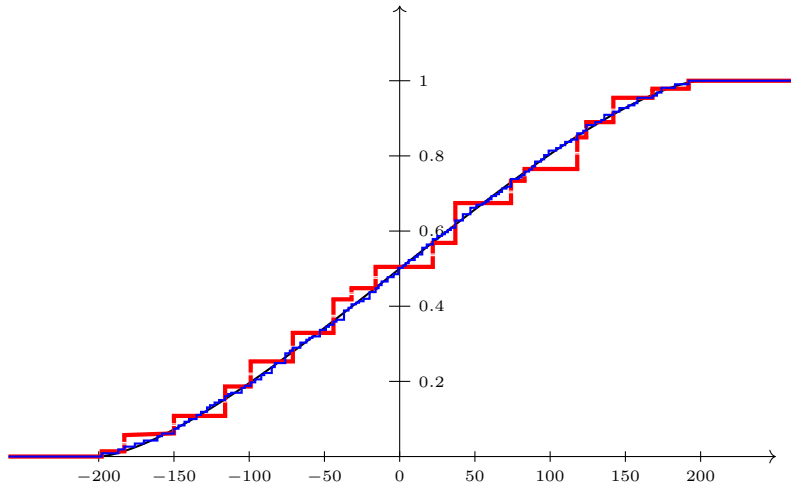

	\subfile{figures/FIGURE-cumulative-function-second.tex}

	\caption{An analogue of \cref{fig:CDF-100} for $n = 10^4$ boxes.}
	\label{fig:CDF-10000}
\end{figure}

As $n \to \infty$, both curves \ref{item:i-tab} and \ref{item:i-tran} converge
to curve \ref{item:i-as}. This convergence is well-established:
\begin{itemize}
	\item For curve \ref{item:i-tab}, it is a reformulation of the result by Romik
and \sniady (\cref{thm:Romik-Sniady-determinism}) about the asymptotic
determinism of the Schensted insertion, as well as a special case of
\cref{thm:determinism-new}.
	
	\item For curve \ref{item:i-tran}, it is a classical result by Kerov on the
transition measure of Plancherel-distributed diagrams
\cite{Kerov1993-transition,KerovBook}.
\end{itemize}

Intriguingly, the apparent rates of convergence differ between the two curves:
\begin{itemize} 
	\item[\ref{item:i-tab}] The observed rate appears to be of order
	$\Theta(n^{-\frac{1}{4}})$. While we lack a rigorous proof for this assertion,
	it aligns well with the findings presented in \cref{sec:fine-asymptotics}.
	
	\item[\ref{item:i-tran}] The convergence seems to be much faster, with the rate
	appearing to be of a significantly smaller order,
	$O(n^{-\frac{1}{2}+\epsilon})$, as conjectured in \cref{conj:transition}.
\end{itemize}

\subsubsection{Future research directions}

An intriguing avenue for future research is the investigation of the expected
number of discontinuities in the cumulative function $F_T$ and their spatial
distribution as the number of boxes tends to infinity. This study is closely
related to the density of infinite geodesics in the percolation tree associated
with Plancherel-distributed infinite standard tableaux on one side, and the
geometry of bumping routes and bumping forests on the other side
\cite{RomikSniady2015,MarciniakMaslankaSniady-bumping}. Understanding these
discontinuities could provide deeper insights into the asymptotic behaviors of
random tableaux. We aim to address this problem in forthcoming work.

\subsection{Single-point evaluation of the cumulative function}

While a comprehensive analysis of the statistical properties of the cumulative
function $F_T$ is beyond the scope of this paper, we focus on a more targeted
approach. Our primary interest lies in understanding the probability
distribution of $F_T(u)$, where the cumulative function is evaluated at a single
point $u$. This approach provides valuable insights while remaining
computationally manageable.

In our recent work \cite{MarciniakSniadyAlternating}, we derived explicit
combinatorial formulas for the cumulants of this probability distribution in
terms of Kerov's transition measure $\mu_\lambda$ of the tableau's
shape~$\lambda$. For the purposes of this paper, we will utilize a few key
results from that work, which are particularly useful for the asymptotic
analysis, as detailed in \cref{lem:alternating-trees} below.

\subsubsection{Cumulants of a probability distribution}

Recall that cumulants \cite{Lauritzen2002,Kendall} are quantities that
provide an alternative to moments for characterizing probability distributions.
For a random variable $X$, its $n$-th cumulant $\kappa_n=\kappa_n(X)$ is defined
as the coefficient in the Taylor expansion of the logarithm of the
moment-generating function:
\[ \log \E\left[e^{tX}\right] = \sum_{n=1}^{\infty} \kappa_n \frac{t^n}{n!}. \]
The first cumulant $\kappa_1$ is the mean, the second cumulant $\kappa_2$ is the
variance, and higher-order cumulants provide information about the shape of the
distribution. The standard normal distribution is uniquely characterized by its
cumulants: its first cumulant (mean) is $0$, its second cumulant (variance) is
$1$, and all higher-order cumulants ($\kappa_n$ for $n \geq 3$) are zero.

\subsubsection{Cumulants of the cumulative function}

\begin{lemma}[{\cite[Equations (3.2) and (3.3) and Corollary 3.4]{MarciniakSniadyAlternating}}]
	\label{lem:alternating-trees} 
	Let $\lambda$ be a fixed Young diagram and $T$ be
	a uniformly random Poissonized tableau of shape $\lambda$. Let $u \in
	\R$ be fixed. The expected value and the variance of $F_T(u)$ are given
	by:
	\begin{align}
		\label{eq:expval}
		\E F_T(u) &= \sum_{x_1 \leq u} \mu_\lambda(x_1), \\
		\label{eq:variance}
		\Var F_T(u) &= \sum_{\substack{x_1 \leq u \\ x_2 > u}} \frac{1}{x_2 - x_1 + 1} \mu_\lambda(x_1) \mu_\lambda(x_2).
	\end{align}
	
	For any $k \geq 1$, the $k$-th cumulant of $F_T(u)$ satisfies the following bound:
	\[ \left| \kappa_k \left( F_T(u) \right) \right| \leq (k-1)! \left[ \cauchy^+_\lambda(u) \right]^{k-1}, \]
	where $\cauchy^+_\lambda(u)$ was defined in \cref{sec:notations}.
\end{lemma}

\section{The second tool: The double cumulative function of a Young diagram}

\label{sec:double-cumulative}

For a Young diagram $\lambda$, we introduce the \emph{double cumulative
	function} $\doublecumulative_\lambda(u, z)$, which serves as a two-dimensional
analogue of the usual \CDF from probability theory.
This function is a tool for analyzing the insertion of a number
into a uniformly random Poissonized tableau of shape~$\lambda$. When a number is
inserted into such a tableau, the double cumulative function
$\doublecumulative_\lambda(u, z)$ combines information about the new box's
position, the inserted number, and the associated probability. This provides a
comprehensive analytical framework for studying Schensted insertion into
Poissonized tableaux.

\subsection{Definition}

Let $\lambda$ be a fixed Young diagram, and let $T$ be a uniformly random
Poissonized tableau of shape $\lambda$. We define the \emph{double cumulative
	function} of $\lambda$ as:
\[ \doublecumulative_\lambda \colon \R \times [0,1] \to [0,1]. \]
This function is given by:
\[ \doublecumulative_\lambda(u, z) = \Pro\big( \uIns(T; z) \geq u \big) =
\Pro\big( F_T(u) \leq z \big) \qquad \text{for } u \in \R, \, z \in [0,1]. \]
It represents the probability that one of the two equivalent conditions
\eqref{eq:why-ft} holds true, or equivalently, that the point $(u, z)$ belongs
to the northwest region with respect to the plot of the cumulative function $u
\mapsto F_T(u)$ (see \cref{fig:french2}). For an example corresponding to
$\lambda = (3,1)$, see \cref{fig:diagram41}.

\subsection{Partitioning of the domain}

The double cumulative function $\doublecumulative_\lambda(u, z)$ exhibits discontinuities along vertical lines where the first coordinate equals the $u$-coordinates of the concave corners of the Young diagram $\lambda$. These vertical lines partition the infinite rectangle $\R \times [0,1]$ into a series of rectangles, which may be finite or semi-infinite. Within each rectangle, $\doublecumulative_\lambda(u, z)$ depends solely on the second variable $z$. This behavior will be further described in \cref{sec:prop-u}. See \cref{fig:4plots} for an illustration.

\subsection{Properties for fixed $u$}
\label{sec:prop-u}
For each fixed $u$, the function 
\begin{equation}
	\label{eq:fixed-u}
	[0,1] \ni z \mapsto
\doublecumulative_\lambda(u, z)
\end{equation}
serves as the \CDF for the random variable
$F_T(u)$. Consequently, $\doublecumulative_\lambda$ is weakly increasing with
respect to the second variable $z$. Results from our recent paper
\cite{MarciniakSniadyAlternating} provide information about this random variable
$F_T(u)$, which we will use later in \cref{lem:convergence-easy}  and \cref{thm:gaussian-profile}
to analyze the behavior of $\doublecumulative_\lambda$ for fixed
values of the first variable. 

\begin{remark}
Since the distribution of the random variable $F_T(u)$ is a mixture of some beta
distributions $\operatorname{Beta}(\alpha,\beta)$ over integers
$\alpha,\beta\geq 1$ with $\alpha+\beta=|\lambda|+2$; it follows that the
function \eqref{eq:fixed-u} is a polynomial, with degree bounded from above by
$|\lambda|$.
\end{remark}

\subsection{Properties for fixed $z$}

For a fixed $z \in [0,1]$, the tail function:
\begin{equation}
	\label{eq:cdf-over-u}
	\R \ni u \mapsto 1 - \doublecumulative_\lambda(u, z) = \Pro\big( \uIns(T; z) < u \big)
\end{equation}
is \emph{almost} equal to the \CDF of the random variable $\uIns(T; z)$. This
property will be crucial for proving \cref{thm:determinism-new,thm:CLT-trans} in
\cref{sec:proofs-of-main-results}. (Strictly speaking, the standard definition
of \CDF involves a \emph{strict} inequality on the right-hand side of
\eqref{eq:cdf-over-u} while we employ a \emph{weak} inequality. This minor
discrepancy will not affect correctness of our arguments presented in
\cref{sec:proofs-of-main-results}.) A simple consequence of this observation is
that the double cumulative function is a weakly decreasing function of the first
variable.

The probability distribution of $\uIns(T; z)$ can be directly inferred from
plots such as those in \cref{fig:4plots}. For a fixed $z$, the vertical unit
interval is partitioned by the plotted curves into sub-intervals. The lengths of
these sub-intervals correspond to the probabilities associated with specific
concave corners of the Young diagram. This visual representation allows for an
intuitive understanding of how the insertion process behaves for different
values of $z$.

\subsection{Navigating the two viewpoints}

Our analytical strategy employs a dual approach to comprehensively examine the
double cumulative function and its implications for the insertion process. We
begin by fixing $u$ and use the results from \cite{MarciniakSniadyAlternating}.
The goal is to derive `two-dimensional' results concerning
$\doublecumulative_\lambda(u, z)$ for an arbitrary choice of $u$ and $z$.
Subsequently, we shift our focus to fixed values of $z$, allowing us to deduce
vital information about the \CDF of the insertion function $\uIns(T; z)$.

\subsection{Examples}
\label{sec:insertion-staircase}

\subsubsection{The example for $\lambda = (3,1)$}
\begin{figure}
    \centering

\begin{tikzpicture}[scale=1.5,rotate=45]

\begin{scope}[shift={(-2,-2)},rotate=-45,xscale=sqrt(0.5),yscale=2]

\fill[fill=yellow!30] (-5,0) rectangle (4.5,1);

\foreach \ya\kolor in {0/0.00, 1/0, 2/0.000011, 3/0.000037, 4/0.000088,
    5/0.00017, 6/0.00029, 7/0.00047, 8/0.00069, 9/0.00098, 10/0.0013, 11/0.0018,
    12/0.0023, 13/0.0029, 14/0.0036, 15/0.0044, 16/0.0054, 17/0.0065, 18/0.0077,
    19/0.0090, 20/0.010, 21/0.012, 22/0.014, 23/0.016, 24/0.018, 25/0.020, 26/0.022,
    27/0.025, 28/0.028, 29/0.031, 30/0.034, 31/0.037, 32/0.041, 33/0.045, 34/0.049,
    35/0.054, 36/0.058, 37/0.062, 38/0.068, 39/0.072, 40/0.078, 41/0.084, 42/0.090,
    43/0.096, 44/0.10, 45/0.11, 46/0.12, 47/0.12, 48/0.13, 49/0.14, 50/0.15, 51/0.16,
    52/0.16, 53/0.18, 54/0.18, 55/0.20, 56/0.20, 57/0.21, 58/0.23, 59/0.24, 60/0.25,
    61/0.26, 62/0.27, 63/0.28, 64/0.30, 65/0.31, 66/0.33, 67/0.34, 68/0.35, 69/0.37,
    70/0.38, 71/0.40, 72/0.41, 73/0.43, 74/0.45, 75/0.46, 76/0.48, 77/0.49, 78/0.52,
    79/0.53, 80/0.55, 81/0.56, 82/0.59, 83/0.61, 84/0.62, 85/0.66, 86/0.67, 87/0.69,
    88/0.72, 89/0.73, 90/0.75, 91/0.78, 92/0.80, 93/0.83, 94/0.84, 95/0.88, 96/0.89,
    97/0.92, 98/0.95, 99/0.97}
{ \fill[fill=blue!60, opacity=\kolor] (0,\ya/100) rectangle +(3,0.01); }

\foreach \ya\kolor in {0/0.00, 1/0.012, 2/0.023, 3/0.035, 4/0.048, 5/0.060,
    6/0.072, 7/0.084, 8/0.096, 9/0.11, 10/0.12, 11/0.13, 12/0.14, 13/0.16, 14/0.17,
    15/0.18, 16/0.20, 17/0.21, 18/0.22, 19/0.23, 20/0.25, 21/0.26, 22/0.27, 23/0.29,
    24/0.30, 25/0.31, 26/0.33, 27/0.34, 28/0.35, 29/0.37, 30/0.38, 31/0.39, 32/0.41,
    33/0.41, 34/0.43, 35/0.45, 36/0.45, 37/0.47, 38/0.48, 39/0.49, 40/0.50, 41/0.52,
    42/0.53, 43/0.55, 44/0.56, 45/0.58, 46/0.58, 47/0.59, 48/0.61, 49/0.62, 50/0.62,
    51/0.64, 52/0.66, 53/0.67, 54/0.67, 55/0.69, 56/0.70, 57/0.72, 58/0.72, 59/0.73,
    60/0.75, 61/0.75, 62/0.77, 63/0.78, 64/0.80, 65/0.80, 66/0.81, 67/0.81, 68/0.83,
    69/0.84, 70/0.84, 71/0.86, 72/0.86, 73/0.88, 74/0.88, 75/0.89, 76/0.89, 77/0.91,
    78/0.91, 79/0.92, 80/0.92, 81/0.94, 82/0.94, 83/0.95, 84/0.95, 85/0.95, 86/0.97,
    87/0.97, 88/0.97, 89/0.98, 90/0.98, 91/0.98, 92/0.98, 93/0.98, 94/1.0, 95/1.0,
    96/1.0, 97/1.0, 98/1.0, 99/1.0} 
{ \fill[fill=blue!60, opacity=\kolor] (-2,\ya/100)
    rectangle +(2,0.01); }

\fill[fill=blue!60] (-5,0) rectangle (-2,1);

\draw[Oranges-5-2,ultra thick, dashed] (-3.5,0) -- (-3.5,1); 
\draw[Oranges-5-3,ultra thick, dashed] (-0.5,0) -- (-0.5,1); 
\draw[Oranges-5-4,ultra thick, dashed] (1.5,0) -- (1.5,1);   
\draw[Oranges-5-5,ultra thick, dashed] (3.5,0) -- (3.5,1);   

\end{scope}

            \coordinate (start) at (-2,-2);
            \coordinate (p1) at ($(start)+(-2,2)$);
            \coordinate (p2) at ($(start)+(2,-2)$);
            
            \coordinate (a) at ($2*(0.70710678118,0.70710678118)$);
            \coordinate (q0) at ($(p1)!(-4,0)!(p2)$);
            \coordinate (q1) at ($(p1)!(-2,0)!(p2)$);
            \coordinate (q2) at ($(p1)!(0,0)!(p2)$);
            \coordinate (q3) at ($(p1)!(3,0)!(p2)$);
            \coordinate (q4) at ($(p1)!(4,0)!(p2)$);

            \draw[decoration={markings,mark=at position 1 with {\arrow[scale=2]{>}}},
            postaction={decorate}] (p1) -- (p2) node[anchor=north west]{$u$};
            
            \draw[decoration={markings,mark=at position 1 with {\arrow[scale=2]{>}}},
            postaction={decorate}] (start) -- ($(start)+1.2*(a)$) node[anchor=west]{$z$};

            \foreach \x in {-3, -2, -1, 0, 1, 2, 3} 
            { \draw ($(p1)!(\x,0)!(p2)$) +(1pt,1pt) -- +(-1pt,-1pt) node[anchor=north] {\x}; }
            
            \foreach \y/\yt in {0.2/0.2,0.4/0.4,0.6/0.6,.8/0.8,1/1} 
            { 
                \draw ($(start)+\y*(a)+(-1pt,1pt)$) -- ($(start)+\y*(a)+(1pt,-1pt)$) node [anchor=west]{\tiny$\yt$};
            }

            \draw[black!40, ultra thick] (4.5,0) node [anchor=south]{\textcolor{black}{$x$}} -- (0,0) -- (0,3.5) node[anchor=east]{\textcolor{black}{$y$}};

            \begin{scope}
                \clip (0,0) -- (3,0) -- (3,1) -- (1,1) -- (1,2) -- (0,2);
                \draw (0,0) grid (3,3); 
            \end{scope}

            \draw[ultra thick] (0,0) -- (3,0) -- (3,1) -- (1,1) -- (1,2) -- (0,2) -- cycle;

            \foreach \x/ \y/ \t in { 3/0/{(0, 0.2) }, 1/1/{(0.2,0.7)}, 0/2/{(0.7,1)} }
            { 
                \fill[blue] (\x,\y) circle (2pt);
                \draw[blue, dashed] (\x,\y) -- ($(p1)!(\x,\y)!(p2)$ );
            }

\end{tikzpicture}
    
\caption{Density plot of the double cumulative function
	$\doublecumulative_\lambda(u, z)$ for the Young diagram $\lambda = (3,1)$.
	Bright yellow indicates areas where the insertion function takes values close to
	$0$, while dark blue indicates values close to $1$. The plots of the double
	cumulative function along the four vertical dashed lines are shown in
	\cref{fig:4plots}.}

    \label{fig:diagram41}
\end{figure}
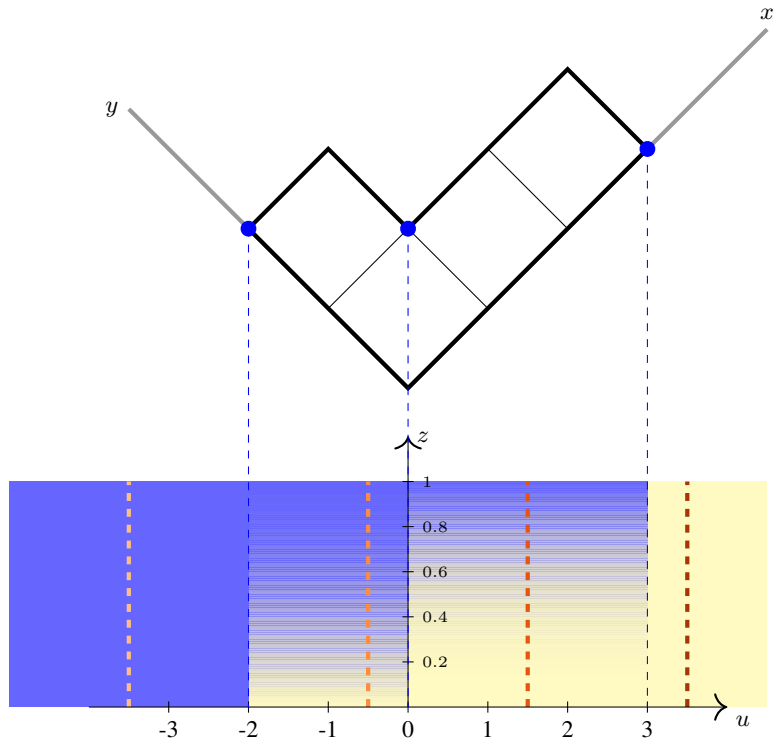

\begin{figure}
    \begin{tikzpicture}[scale=5]
        \draw[dotted] (0,0) grid[step=0.2] (1,1);
        \draw[->] (-0.1, 0) -- (1.1, 0) node[right] {$z$};
        \draw[->] (0,-0.1) -- (0, 1.1) node[above] {$\doublecumulative(u,z)$};
        
        \draw[Oranges-5-2, ultra thick] (0,1) -- (1,1);
        \draw[Oranges-5-2] (0.3,1.07) node {$u\leq -2$};

        \draw[domain=0:1, smooth, variable=\x, Oranges-5-3, ultra thick] plot ({\x}, {-341/250*(\x - 1)*(\x - 1)*(\x - 1)*\x + 2013/500*(\x - 1)*(\x - 1)*\x*\x - 4*(\x - 1)*\x*\x*\x + \x*\x*\x*\x});
        \draw[Oranges-5-3] (0.3,0.75) node {$-2\leq u < 0$};

   \draw[domain=0:1, smooth, variable=\x, Oranges-5-4, ultra thick] plot ({\x}, {-329/250*(\x - 1)*\x*\x*\x + \x*\x*\x*\x});
\draw[Oranges-5-4] (1.05,0.5) node {$0\leq u < 3$};

       \draw[Oranges-5-5, ultra thick] (0,0) -- (1,0);
\draw[Oranges-5-5] (0.9,0.1) node {$u\geq 3$};

    \foreach \y/\yt in {0.2/0.2,0.4/0.4,0.6/0.6,.8/0.8,1/1} 
{    \draw (0,\y) +(0.2pt,0) -- +(-0.2pt,0) node [anchor=east]{\tiny$\yt$};
    \draw (\y,0) +(0,0.2pt) -- +(0,-0.2pt) node [anchor=north]{\tiny$\yt$};
}

    \end{tikzpicture}
    
	\caption{Double cumulative function plots for Young diagram $\lambda = (3,1)$.
	This figure extends the example from \cref{fig:diagram41}, showing $z \mapsto
	\doublecumulative_\lambda(u,z)$ for various values of $u$. These plots
	illustrate how the function behaves across different $u$ values, providing
	insight into the insertion probabilities for Poissonized tableaux of this
	shape.}
	
	\label{fig:4plots}
\end{figure}
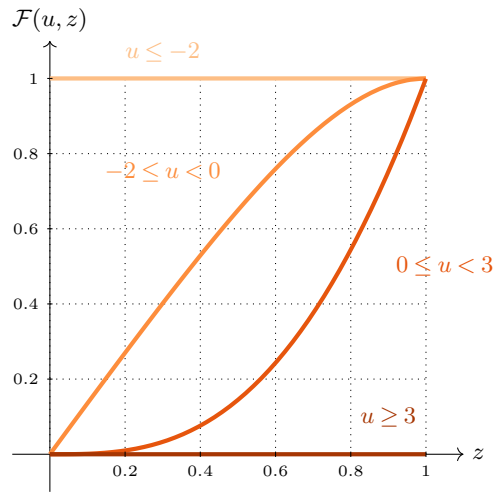

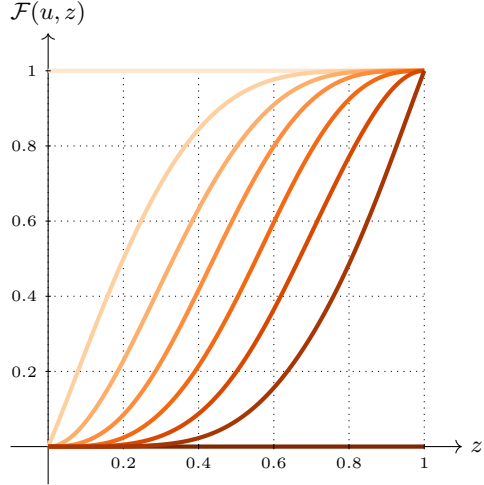
\begin{figure}
    \begin{tikzpicture}[scale=5]
        \draw[dotted] (0,0) grid[step=0.2] (1,1);
        \draw[->] (-0.1, 0) -- (1.1, 0) node[right] {$z$};
        \draw[->] (0,-0.1) -- (0, 1.1) node[above] {$\doublecumulative(u,z)$};
        
        \draw[Oranges-9-2, ultra thick] (0,1) -- (1,1);
        \draw[Oranges-9-9, ultra thick] (0,0) -- (1,0);

        \foreach \y/\yt in {0.2/0.2,0.4/0.4,0.6/0.6,.8/0.8,1/1} 
        {    \draw (0,\y) +(0.2pt,0) -- +(-0.2pt,0) node [anchor=east]{\tiny$\yt$};
            \draw (\y,0) +(0,0.2pt) -- +(0,-0.2pt) node [anchor=north]{\tiny$\yt$};
        }

        \foreach \y in {3,4,5,6,7,8}
        { \draw[Oranges-9-\y,ultra thick] plot[smooth] file {figures/insertionfunction/kolor\y.txt}; }
    \end{tikzpicture}
    
   \caption{Double cumulative function plots for the staircase Young diagram
	$\lambda = (6,5,4,3,2,1)$. This diagram features $7$ concave corners, whose
	$u$-coordinates partition the real line into $8$ intervals (finite or
	infinite). The topmost bright curve (constant at $1$) corresponds to the
	leftmost (infinite) interval, while the bottommost dark curve (constant at $0$)
	represents the rightmost (infinite) interval. This figure is analogous to
	\cref{fig:4plots}.}
    
   \label{fig:8plots}
\end{figure}

\begin{figure}
    \centering
    \begin{tikzpicture}[xscale=0.3,yscale=10]
        \fill[yellow!30] (-22,0) rectangle (22,1);
        \fill[blue!60] (-22,0) rectangle (-19,1);
        
        \foreach \plik in {1,2,...,19} {
            \loaddata{figures/density/density\plik.txt}

            \foreach \ya\kolor in \loadeddata
            { \fill[fill=blue!60, opacity=\kolor] (19-2*\plik,\ya) rectangle +(2,0.005); } }

        \draw[red,ultra thick,xscale=19] plot[smooth] file {figures/data/sinus.txt};

        \loaddata{figures/data/cloud_19.txt}
        \foreach \z\u in \loadeddata
        {\draw[ultra thick,white] (\u,\z) +(-0.2,0.01) -- +(0.2,-0.01)
            +(0.2,0.01) -- +(-0.2,-0.01); }
        
        \foreach \z\u in \loadeddata
        {\draw[black] (\u,\z) +(-0.2,0.01) -- +(0.2,-0.01)
            +(0.2,0.01) -- +(-0.2,-0.01); }
        
        \draw[->] (0,-0.1) -- (0,1.1);
        \foreach \y/\yt in {0.2/0.2,0.4/0.4,0.6/0.6,.8/0.8,1/1} 
        { 
            \draw (0,\y) +(-10pt,0) -- +(10pt,0) node [anchor=west]{\small$\yt$};
        }

        \draw[->] (-22,0) -- (22,0);
        \foreach \y in {-20,-19,...,20} 
        { 
            \draw (\y,0) +(0,0.1pt) -- +(0,-0.1pt);
        }
        
        \foreach \y/\yt in {-20/-20,-15/-15,-10/-10,-5/-5,5/5,10/10,15/15,20/20} 
        { 
            \draw (\y,0) +(0,0.3pt) -- +(0,-0.3pt) node [anchor=north]{\small$\yt$};
        }

    \end{tikzpicture}
    
    \caption{Double cumulative function visualization for the staircase Young diagram $\lambda = (19,18,\dots,1)$. This figure extends the analysis presented in \cref{fig:diagram41}, illustrating the behavior for a larger staircase shape.
    The crosses represent points $(\uIns(T;z), z)$, where
	$z$ is sampled from the uniform distribution $U(0,1)$, and $T$ is an
	independently sampled random Poissonized tableau of shape $\lambda$. The
	thick red curve depicts the \CDF of the (dilated)
	arcsine law, which is the limit measure of transition measures for
	large staircase tableaux.}
    
    \label{fig:19plot}
\end{figure}
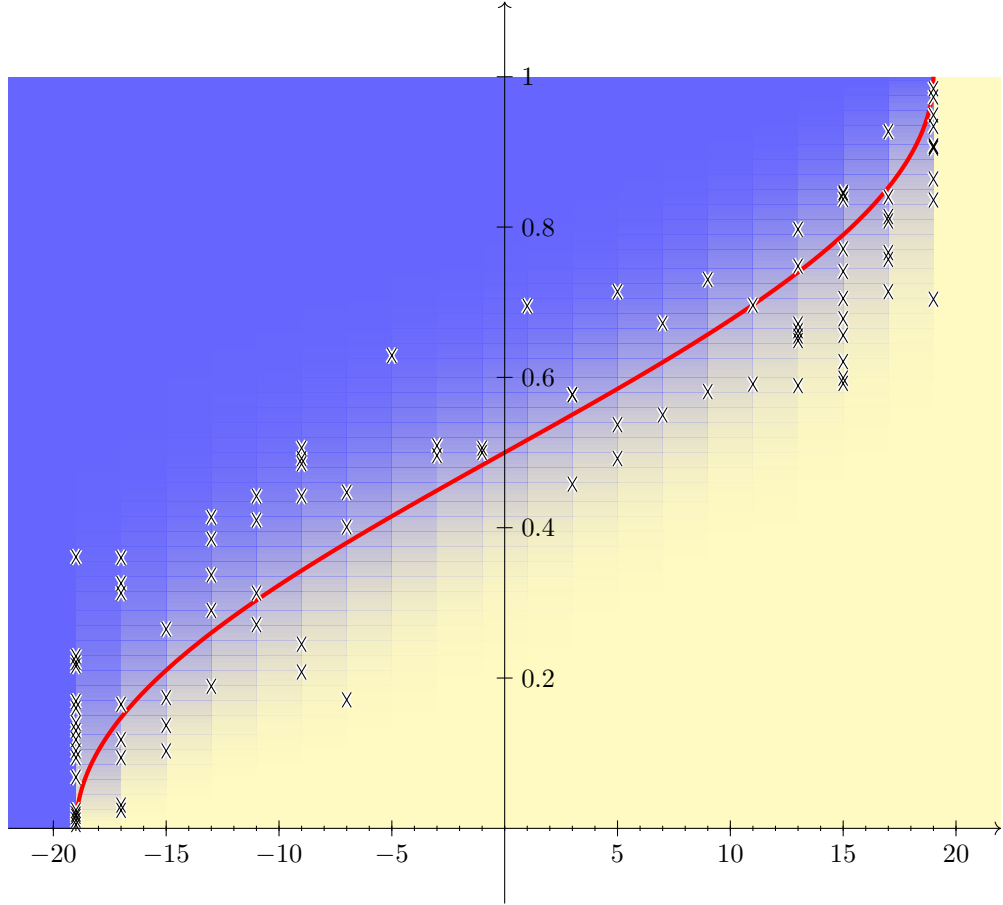

The double cumulative function for $\lambda = (3,1)$ is visualized in
\cref{fig:diagram41} as a density plot. The cumulative function plots along the
four vertical lines are shown in \cref{fig:4plots}.

\subsubsection{Staircase tableaux}

Expanding on our earlier discussion in \cref{sec:staircase}, we present
visualizations of the double cumulative function for two distinct staircase
diagrams in \cref{fig:8plots,fig:19plot}.

\section{First-order asymptotics of the double cumulative function}
\label{sec:first-order-double-function}

The phenomenon we aim to elucidate is vividly exemplified by large staircase
diagrams, as depicted in \cref{fig:19plot}. Let us examine the density
plot of the double cumulative function $\doublecumulative_{\lambda}\colon \R
\times [0,1] \to [0,1]$ for a Young diagram $\lambda$. The domain, an
infinite rectangle $\R \times [0,1]$, is bisected by the graph of the
cumulative function $K_\lambda: \R \to [0,1]$, delineating two distinct
regions:
\begin{itemize}
	\item the southeast region, characterized by small values of $\doublecumulative_{\lambda}$ (approaching 0),
	\item the northwest region, distinguished by large values of $\doublecumulative_{\lambda}$ (nearing 1).
\end{itemize}
The following proposition formalizes this observation:

\begin{proposition}
\label{lem:convergence-easy} 
Consider a sequence $(\lambda^{(n)})$ of
random Young diagrams. We posit the existence of a probability measure $\nu$ on
$\R$ such that for each $u \in \R$ where the cumulative
distribution function $F_\nu$ is continuous, the following convergence in
probability holds:
\[   K_{\lambda^{(n)}}\left( \sqrt{n} \ u \right) \xrightarrow[n\to\infty]{P} F_\nu(u). \]

For $u \in \R$ and $z \in [0,1]$:

If $z < F_\mu(u^-) = \lim_{v \to u^-} F_\mu(v)$, then:
\[ \doublecumulative_{\lambda^{(n)}}\left(\sqrt{n} \ u, z\right) \xrightarrow[n\to\infty]{P} 0. \]

Conversely, if $z > F_\mu(u)$, then:
\[ \doublecumulative_{\lambda^{(n)}}\left(\sqrt{n}\ u, z\right) \xrightarrow[n\to\infty]{P} 1.\]
\end{proposition}

\newcommand{\uOne}{u'}
\newcommand{\uTwo}{u}
\newcommand{\uPrime}{u''}

We begin with a lemma concerning the double cumulative function
$\doublecumulative_{\lambda}$ in a deterministic setting. The key idea is that
when a point $(u, z)$ lies below the graph of $K_\lambda$ and is sufficiently
separated from it both vertically and horizontally, there exists an upper bound
on the value of $\doublecumulative_{\lambda}(u, z)$.

\begin{lemma}
	\label{eq:double-czebyszew}
	Let $\lambda$ be a Young diagram, and consider real numbers $\uOne < \uTwo$ and $z$ satisfying:
	\[ 0 \leq z < K_{\lambda}(\uOne). \]
	
	Then the following inequality holds:
	\[ 0 \leq \doublecumulative_{\lambda}(\uTwo, z) \leq
	\frac{1}{(\uTwo-\uOne) \big(K_{\lambda}(\uOne) -z\big)^2}.
	\]
\end{lemma}

\begin{proof}
	Let $T$ be a uniformly random Poissonized tableau of shape $\lambda$. The
explicit formula \eqref{eq:variance} for the variance of $F_{T}(v)$ implies:
	\[ \int_{\uOne}^{\uTwo} \Var F_{T}(v) \dif v \leq 1.\]
By the mean value theorem for integrals, there exists $\uPrime \in [\uOne,
\uTwo]$ such that:
	\[ \Var F_{T}(\uPrime) \leq \frac{1}{\uTwo-\uOne}. \]
	
	From \eqref{eq:expval}, we know the mean value satisfies:
	\[ \E F_{T}(\uPrime) = K_{\lambda}(\uPrime) > z. \]	
	This allows us to apply the Bienaymé--Chebyshev inequality:
	\begin{multline*} 
		0\leq \doublecumulative_{\lambda}(\uTwo, z) \leq
		\doublecumulative_{\lambda}(\uPrime, z) =
		\Pro\left( F_{T}(\uPrime) \leq z \right) \leq   \\
		\Pro\left( \left| F_{T}(\uPrime)- \E F_T(\uPrime) \right| \geq  \E F_{T}(\uPrime) -z  \right) \leq  \\
		\frac{\Var F_{T}(\uPrime)}{ \left( \E F_{T}(\uPrime) - z\right)^2 } \leq 
		\frac{1}{(\uTwo-\uOne)  \left( K_{\lambda}(\uOne) - z\right)^2 },  
	\end{multline*}
	as required.
\end{proof}

\begin{proof}[Proof of \cref{lem:convergence-easy}]
	We begin by addressing the first part of the claim, assuming $z < F_\mu(u^-)$.
	There exists $u' < u$ such that $F_\nu(u') > z$. Without loss of generality, we
may assume that $u'$ is a point of continuity for $F_\nu$, ensuring:
	\[ K_{\lambda^{(n)}}\left( \sqrt{n}\ u'\right) \xrightarrow[n\to\infty]{P} F_\nu(u'). \] 
	
	Applying \cref{eq:double-czebyszew}, we obtain:
	\[   0 \leq 
	\doublecumulative_{\lambda^{(n)}}\left(\sqrt{n}\ u, z\right) 
	\leq
	\begin{cases}   
		\frac{1}{(u-u') \sqrt{n}\ \left( K_{\lambda^{(n)}}\left(\sqrt{n}\ u'\right) - z\right)^2 }  
		& \text{if } K_{\lambda^{(n)}}\left( \sqrt{n}\ u'\right) > z,  \\
		1 & \text{otherwise}.
	\end{cases}
	\]   
	The random variable on the right-hand side converges in probability to zero,
completing the proof of the first part.
	
The second part of the claim is established through a parallel line of
reasoning, albeit with an adjustment of certain inequality directions.
This modification is applied to the function:
\[ (u,z) \mapsto 1- \doublecumulative_{\lambda^{(n)}}\left(\sqrt{n}\ u,
z\right) \]
which pertains to the complementary events.
\end{proof}

\section{Fine asymptotics of the double cumulative function}
\label{sec:fine-asymptotics}

The phenomenon we aim to describe is exemplified by large staircase
diagrams, as vividly depicted in Figure \ref{fig:19plot}. As previously
discussed in Section \ref{sec:first-order-double-function}, the domain of the
density plot for the double cumulative function $\doublecumulative_{\lambda}\colon 
\R \times [0,1] \to [0,1]$ is partitioned by the graph of the cumulative
function $K_\lambda\colon \R \to [0,1]$. This partition delineates two
distinct regions: the southeast region, characterized by values approaching $0$,
and the northwest region, distinguished by values nearing~$1$.

Of particular intrigue is the narrow band immediately adjacent to the plot of
$K_\lambda$, where the transition between these regions unfolds. As the
magnitude of $\lambda$ increases, the relative width of this transition zone
converges to zero, creating a fascinating boundary layer phenomenon. The crux of
this section, encapsulated in Theorem \ref{thm:gaussian-profile}, reveals a
remarkable property: within this transition area, the double cumulative function
can be approximated by the \CDF of the standard normal distribution.

\subsection{New coordinate system} 

We employ the notations from \cref{thm:CLT-trans}. To provide a formal,
quantitative description of the behavior of
$\doublecumulative_{\lambda^{(n)}}$, we introduce a new coordinate system 
on its domain $\R\times [0,1]$. The new coordinates $(\stala, \xi)$
correspond to a point $(U_{n,\stala}, Z_{n,\stala}) \in \R \times [0,1]$,
where:
\begin{align*}
	U_{n,\stala} &= \sqrt{n}\ u_0 + \sqrt[4]{n}\ \stala, \\
	Z_{n,\stala} &= z_0 + \frac{\xi}{\sqrt[4]{n}}.
\end{align*}
This coordinate system is centered around the point $(\sqrt{n}\ u_0, z_0)$ that
represents a first-order approximation for the insertion $T^{(n)} \leftarrow
z_0$. Notably, the units of this system are scaled in proportion to
$\sqrt[4]{n}$ or its inverse, which aligns with the scale of problem.

\subsection{The Gaussian profile of the double cumulative function} 

The following theorem encapsulates the behavior of the
double cumulative function in the transition area:
\begin{theorem}
	\label{thm:gaussian-profile}
Under the assumptions of \cref{thm:CLT-trans}
	\begin{equation}
		\label{eq:area-of-transition}
		  \doublecumulative_{\lambda^{(n)}}
	  \left(U_{n,q}  ,\ 
	        Z_{n,\xi} \right) \xrightarrow[n\to\infty]{P}
	\Phi\left(  \frac{\xi- f_0 q}{\sqrt{\Force}} \right)
	\end{equation}        
   for each $q,\xi\in\R$, 	where $\Phi(x)$ denotes the \CDF of the standard
   normal distribution, given by
   \begin{equation}
   	\Phi(x) = \frac{1}{\sqrt{2\pi}} \int_{-\infty}^x e^{-\frac{t^2}{2}} \, \dif t.
   \end{equation}
\end{theorem}
The remaining part of this section is devoted to the proof.

\begin{remark}
We shall now examine the geometric properties of the surface defined by the mapping
\[
(q, \xi) \mapsto \Phi\left( \frac{\xi - f_0 q}{\sqrt{\Force}} \right),
\]
which emerges as the limiting function on the right-hand side of equation
\eqref{eq:area-of-transition}. A notable characteristic of this surface is that
its level curves are straight lines, expressible in the form
\[
\xi = f_0 q + c,
\]
where \( c \) denotes a constant. These lines are  parallel to the tangent line
to the local approximation of the graph of \( K_{\lambda^{(n)}} \) given by the
\assum{item:local2} in \cref{thm:CLT-trans}. The aforementioned
geometric property is visually discernible in the illustrative example
presented in \cref{fig:19plot}.
\end{remark}

\subsection{Case study: large staircase tableaux}
\label{sec:staircase3}

We revisit the staircase tableaux example introduced in \cref{sec:staircase}, now examining it through the lens of 
\cref{thm:gaussian-profile}.

A consequence of \cref{thm:gaussian-profile} is that for a large Young diagram
$\lambda$ and fixed $u$, the function $z \mapsto \doublecumulative_{\lambda}(u,
z)$ can be approximated by the \CDF of a normal distribution. The parameters of
this approximating distribution—mean and variance—are determined by equations
\eqref{eq:expval} and \eqref{eq:variance}, respectively.

A key insight emerges for staircase tableaux: the variance, up to a geometric
scaling factor, closely approximates the interaction energy
\eqref{eq:arc-sine-force} of the arcsine distribution. This distribution arises
as the limit transition measure for large staircase diagrams (see \cref{sec:staircase}). Notably, for this measure, the interaction energy remains
constant across the entire spectrum of valid $u$ values.

\Cref{fig:8plots} provides visual corroboration of this concept. The
depicted family of curves resembles a series of shifted plots of a single scaled
\CDF of the standard normal distribution. This aligns with our theoretical
expectations of the normal distribution approximation with a fixed variance. A
similar phenomenon is observed in \cref{fig:19plot}: for a wide range of
$u$ values, the color profile along a fixed $u$ coordinate appears similar, up
to a vertical shift.

Moreover, for large staircase diagrams, the mean value of the approximating
normal distribution, given by \eqref{eq:expval}, is approximately the \CDF of the
aforementioned arcsine measure. This relationship is visually evident in 
\cref{fig:19plot}, where the region of significant color change closely follows
the plot of the arcsine measure's \CDF.

\subsection{Normalized cumulative function}

For an integer $n \geq 1$ and $\stala \in \R$ we consider the random variable
\[ X_{n,\stala} = \frac{\sqrt[4]{n}}{\sqrt{\Force}} 
\left[ F_{T^{(n)}}\big(U_{n,\stala} \big) - \left(z_0+ \frac{f_0 \stala}{\sqrt[4]{n}  } \right) \right] 
= 
\frac{\sqrt[4]{n}}{\sqrt{\Force}} 
\left[ F_{T^{(n)}}\big(U_{n,\stala}\big) - z_0 \right] - \frac{f_0 \stala}{\sqrt{\Force}}.
\]
This random variable is the result of an affine transformation applied to
$F_{T^{(n)}}(U_{n,\stala})$, which is the cumulative  function of
$T^{(n)}$ evaluated at the point with the new coordinate $\stala$.
As we shall see, the constants in this
affine transformation were chosen in such a way that $X_{n,\stala}$ behaves
asymptotically like a standard normal random variable.

The double cumulative function $\doublecumulative_{\lambda^{(n)}}$ is
intimately linked to $X_{n,\stala}$ through the following lemma:
\begin{lemma}
	\label{lem:link-to-x}
	For any $q,\xi\in\R$
\[ \doublecumulative_{\lambda^{(n)}}( U_{n,q}, Z_{n,\xi} ) = 
           \Pro\left[ 	X_{n,\stala} \leq  \frac{\xi - f_0 \stala}{\sqrt{\Force}} \right]. \]
\end{lemma}
\begin{proof}

	The value of the double cumulative function concerns the insertion
$T^{(n)}\leftarrow Z_{n,\xi}$ and represents the probability that one of the
following equivalent conditions occurs:
\begin{itemize}
	\item The insertion position in the transformed coordinates is greater than $\stala$.
	\item The original coordinate of the insertion exceeds $U_{n,\stala}$.
	\item The cumulative function $F_{T^{(n)}}$ evaluated at $U_{n,\stala}$ is less than or equal to $Z_{n,\xi}$.
	\item The random variable $X_{n,\stala}$ is at most $\frac{\xi - f_0 \stala}{\sqrt{\Force}}$.
\end{itemize}

Formally, this can be expressed as:
\begin{multline}
	\label{eq:4conditions}
	\frac{\uIns(T^{(n)}; Z_{n,\xi}) - \sqrt{n}\ u_0}{\sqrt[4]{n}} \geq \stala \quad \iff \quad  
	\uIns(T^{(n)}; Z_{n,\xi}) \geq U_{n, \stala} \quad \iff \\
	F_{T^{(n)}}\big(U_{n,\stala}\big) \leq Z_{n,\xi} \quad 
	\iff \quad  
	X_{n,\stala} \leq  \frac{\xi - f_0 \stala}{\sqrt{\Force}},
\end{multline}
see \eqref{eq:why-ft}.
\end{proof}

\subsection{Randomized probability distribution of $X_{n,q}$}

Our objective is to analyze the probability distribution of $X_{n,\stala}$.
However, the assumptions in \cref{thm:CLT-trans} prove insufficient to draw
conclusions about its behavior for an arbitrary constant $\stala \in
\R$ (see \cref{rem:spreading}). To overcome this limitation, we
introduce randomness to the spatial parameter $\stala$.

\subsubsection{Randomized approach}

Instead of examining $X_{n,\stala}$ for a fixed transformed coordinate
$\stala$, we study $X_{n,\zmienna}$, where $\zmienna$ is a standard Gaussian 
random variable independent from $T^{(n)}$. Without loss of generality, we
assume that this random variable coincides with the Gaussian random variable
from \cref{thm:CLT-trans}.

This probabilistic approach allows us to leverage the power of probabilistic
methods by considering average behavior over a distribution of spatial
parameters, and to shift our goal from proving a result that holds for each
value of $\stala$ to demonstrating that the probability of our claimed result
converges to $1$ for a random choice of $\stala$.

\subsubsection{Formal framework}

Let $\sigmafield$ denote the $\sigma$-field generated by the random variables
$\zmienna$ and $\lambda^{(n)}$. Our target result is encapsulated in the
following result:
\begin{proposition}
	\label{lem:conditional-normal} 
	
The conditional probability distribution of the random variable
$X_{n,\zmienna}$, given $\sigmafield$, converges in probability to the standard
Gaussian measure. More precisely, the supremum distance between the
corresponding \CDFs converges in probability to $0$:
	\begin{equation}
		\label{eq:sup-distance}
		\sup_t \left| \Pro\big( X_{n,\zmienna} \leq t\  \big| \ \sigmafield \big) - \Phi(t) \right|
		        \xrightarrow[n\to\infty]{P} 0,
	\end{equation}
	where $\Phi(t)$ is the \CDF of the standard normal distribution.
\end{proposition}

This result formalizes the convergence of the conditional probability
distribution, which is a random probability measure on the real line, to the
standard Gaussian measure. The use of the supremum distance between cumulative
distribution functions provides a rigorous metric for this convergence.

The proof of this result is postponed to \cref{sec:proof-of-conditional-normal}.

\subsubsection{Two spacial coordinate systems}

In the proof of \cref{lem:conditional-normal}, it is necessary to employ the
assumptions from \cref{thm:CLT-trans}, which utilize slightly different
notation. Recall that the random variable \(\uwithnoise\), defined in
\eqref{eq:uwithnoise}, was used in \cref{thm:CLT-trans} to refer to the
rescaled diagram \(\omega_n = \omega_{\frac{1}{\sqrt{n}} \lambda^{(n)}}\). This
variable is clearly related to the random variable \(U_{n,\zmienna}\) by a
simple scaling:
\begin{equation}
	\label{eq:uwithnoise-related}
	\uwithnoise = \frac{1}{\sqrt{n}} U_{n,\zmienna}.
\end{equation}
Since \(U_{n,\zmienna}\) will be used to refer to the original, non-rescaled
diagram \(\lambda^{(n)}\), it follows that \(\uwithnoise\) and
\(U_{n,\zmienna}\) represent the same point in the dia gram but in two different
coordinate systems.

\subsubsection{Conditional moments and conditional cumulants}

The proof strategy for \linebreak\cref{lem:conditional-normal} employs an adaptation of
the method of moments to a conditional framework. Rather than directly
examining the conditional moments $\E[X_{n,\zmienna}^k \mid \sigmafield]$, we
shall pursue a more nuanced approach by focusing on the \emph{conditional
	cumulants} of $X_{n,\zmienna}$. This conditional setup can be intuitively
understood as fixing the values of the spacial variable $\zmienna$ and the
Young diagram $\lambda^{(n)}$. Consequently, \cref{lem:alternating-trees}
becomes applicable, the implications of which we shall elucidate below.

Formula \eqref{eq:expval} from \cref{lem:alternating-trees}  and the
relationship  \eqref{eq:uwithnoise-related} imply that the conditional expected
value is given by
\begin{multline} 
	\label{eq:cond-cumu-1}
	\E \giventhat*{X_{n,\zmienna}}{\sigmafield}  =
\frac{\sqrt[4]{n}}{\sqrt{\Force}} 
\left[ \E \giventhat*{ F_{T^{(n)}}\big(U_{n,\zmienna}\big) }{\sigmafield}  - z_0 \right] 
- \frac{f_0 \zmienna}{\sqrt{\Force}}  =
\\
\frac{\sqrt[4]{n}}{\sqrt{\Force}} 
\left[ K_{\omega_n}(\uwithnoise)  - z_0 \right] 
  - \frac{f_0 \sqrt[4]{n} (\uwithnoise-u_0)}{\sqrt{\Force}}  
\xrightarrow[n\to\infty]{P} 0
\end{multline}
which converges in probability to zero by \assum{item:local2}.

Formula \eqref{eq:variance} and the relationship  \eqref{eq:uwithnoise-related}
imply that the conditional variance is given by
\begin{equation}
	\label{eq:cond-cumu-2}
	 \Var\giventhat*{X_{n,\zmienna}}{\sigmafield} = 
\frac{\sqrt[2]{n}}{\Force} \Var 
 F_{T^{(n)}}\big(U_{n,\zmienna} \big) 
\xrightarrow[n\to\infty]{P} 1 
\end{equation}
where the convergence is a consequence of the \assum{item:energy2}.

For $k\geq 3$ the absolute value of the corresponding cumulant can be
bounded thanks to \cref{lem:alternating-trees} as follows:
\begin{multline}
	\label{eq:cond-cumu-3}
\bigg| 	\kappa_k \giventhat*{X_n}{\sigmafield} \bigg| =
\left( \frac{\sqrt[4]{n}}{\sqrt{\Force}}  \right)^k	
  \left| \kappa_k \giventhat*{F_{T^{(n)}}\big(U_{n,\zmienna} \big) }{\sigmafield} \right| \leq \\
\left( \frac{\sqrt[4]{n}}{\sqrt{\Force}}  \right)^k  
  (k-1)! \left[ \cauchy^+_\lambda(U_{n,\zmienna} ) \right]^{k-1} 
	\xrightarrow[n\to\infty]{P} 0
\end{multline}
which converges in probability to $0$  by 
\assum{item:regular2}.

The fundamental implication of equations
\eqref{eq:cond-cumu-1}--\eqref{eq:cond-cumu-3} is that the sequence of
conditional cumulants of $X_{n,\zmienna}$ converges in probability to that of
the standard normal distribution as $n\to\infty$.

\subsubsection{Finite number of moments is enough}

\begin{lemma}
	\label{lem:convergence-to-normal} 
	
For any $\epsilon > 0$, there exist an
integer $n_0 \geq 1$ and $\delta > 0$ such that the following holds: 

Let $\mu$
be an arbitrary probability measure on the real line whose first $n_0$ moments
(or cumulants) exist and are $\delta$-close to the corresponding moments (or
cumulants) of the standard normal distribution $N(0,1)$. Then, the supremum
distance between the \CDF of $\mu$ and that of the
standard normal distribution is less than $\epsilon$. Formally,
	\begin{equation}
		\sup_{t \in \R} |F_\mu(t) - \Phi(t)| < \epsilon.
	\end{equation}
\end{lemma}
\begin{proof}
	We proceed by contradiction. Suppose the statement were false. Then there
would exist a sequence of probability measures that converges in moments to
the standard normal distribution but does not converge weakly to it. However,
this is impossible, as the normal distribution is uniquely determined by its
moments (a property known as moment determinacy). The contradiction
establishes the lemma.
\end{proof}

\newcommand{\event}{A}

\subsubsection{Proof of \cref{lem:conditional-normal}}
\label{sec:proof-of-conditional-normal}

\begin{proof}[Proof of \cref{lem:conditional-normal}]

Let $\epsilon>0$ be arbitrary, let $n_0$ and $\delta$ be the values provided by
\cref{lem:convergence-to-normal}. Equations
\eqref{eq:cond-cumu-1}--\eqref{eq:cond-cumu-3} imply that the probability that
the first $n_0$ conditional cumulants of $X_n$ are $\delta$-close to their
counterparts for the standard normal distribution converges to $1$.
Consequently, by \cref{lem:convergence-to-normal}, we have that the probability
of the event
\[
	\sup_t \left| \Pro\big( X_{n,\zmienna} \leq t\  \big| \ \sigmafield \big) - \Phi(t) \right| < \epsilon
\]
converges to $1$ as $n\to\infty$, as required.
\end{proof}

\subsection{Conditioning on a bounded interval}

The conclusion of \cref{lem:conditional-normal} remains valid under additional
conditioning. Specifically, for any constants $\stala_1 < \stala_2$, we may
condition on the event $\{\stala_1 < \zmienna < \stala_2\}$ without affecting
the lemma's result. Consequently, we can proceed with our analysis under the
assumption that the random variable $\zmienna$ is confined to the interval
$(\stala_1, \stala_2)$ without loss of generality.

\subsection{Conditional \CDF of $X_{n,\zmienna}$}

The following result bears a strong resemblance to \cref{lem:conditional-normal}. However, a key distinction lies in the nature of the conditional probabilities 
considered:
\begin{itemize}
	\item In \cref{lem:conditional-normal}, the probability is conditioned on both the choice of $\lambda^{(n)}$ and $\zmienna$.
	\item In contrast, \cref{lem:ccdf} considers a probability conditioned solely on the choice of $\lambda^{(n)}$.
\end{itemize}

\begin{proposition}
	\label{lem:ccdf}
	For each $t\in\R$
\begin{equation}
	\label{eq:converges-to-Psi}
		 \Pro\giventhat*{ X_{n,\zmienna} \leq  t }{\lambda^{(n)}} \xrightarrow[n\to\infty]{P} \Phi(t).
\end{equation}
\end{proposition}
\begin{proof}
	The difference of the random variable on the left-hand side of
\eqref{eq:converges-to-Psi}, and the hypothetical limit:
	\begin{equation}
		\label{eq:cond-cdf-is-Psi}
		\Pro\giventhat*{ X_{n,\zmienna} \leq  t }{\lambda^{(n)}} - \Phi(t)  = 
	\E\giventhat*{  
		\Pro\giventhat*{ 
			X_{n,\zmienna} \leq  t }{\sigmafield}
		- \Phi(t) 
	}{\lambda^{(n)}}
	\end{equation}	
	can be obtained by taking the conditional expected value in two steps. Its
$L^1$ norm is bounded from above by the $L^1$ norm of the random variable
within the most external conditional expected value on the right-hand side, which is
	\[ 
	\E \left[ \ 
	\left|	\Pro\giventhat*{ 
		X_{n,\zmienna} \leq  t }{\sigmafield}
	- \Phi(t)
	\right| \ \right]
	\]
	which converges to zero by \cref{lem:conditional-normal} 
\end{proof}

\subsection{The consequences of monotonicity of the double cumulative function}

The double cumulative function $\doublecumulative_{\lambda^{(n)}}$ exhibits weak
monotonicity with respect to its first coordinate. Consequently, we can
establish a lower bound for \eqref{eq:area-of-transition} by averaging over the
random variable $\zmienna \in (q_1, q_2)$. This bound can be expressed as
follows:
\begin{multline}
	\label{eq:magic}
\doublecumulative_{\lambda^{(n)}}(U_{n,q_1}, Z_{n,\xi}) \geq 
\E \giventhat*{ \doublecumulative_{\lambda^{(n)}}(U_{n,\zmienna}, Z_{n,\xi}) }{\lambda^{(n)}} =
\Pro\giventhat*{ X_{n,\zmienna} \leq  \frac{\xi - f_0 \zmienna}{\sqrt{\Force}} }{\lambda^{(n)}} \geq \\
\Pro\giventhat*{ X_{n,\zmienna} \leq  \frac{\xi - f_0 q_2}{\sqrt{\Force}} }{\lambda^{(n)}} 
           \xrightarrow[n\to\infty]{P} \Phi\left( \frac{\xi - f_0 q_2}{\sqrt{\Force}} \right)
\end{multline}
where the equality follows from \cref{lem:link-to-x}, and the convergence in
probability is a consequence of \cref{lem:ccdf}.

Let $\epsilon > 0$. We set $q_1 := q$ and choose $q_2 > q$ such that the
right-hand side of \eqref{eq:magic} is $\epsilon$-close to $\Phi\left( \frac{\xi
	- f_0 q}{\sqrt{\Force}} \right)$. This choice ensures that the probability of
the event
\[ \doublecumulative_{\lambda^{(n)}}(U_{n,q},Z_{n,\xi})\geq
\Phi\left(\frac{\xi -f_0q}{\sqrt{\Force}}\right)-2\epsilon \]
converges to $1$ as
$n \to \infty$.
This demonstrates that the lower bound for the double cumulative function
$\doublecumulative_{\lambda^{(n)}}$ is asymptotically close to the standard
normal \CDF, within an error margin of
$2\epsilon$.

Proof of the opposite inequality follows similarly and is omitted for brevity.
This completes the proof of \cref{thm:gaussian-profile}.

\section{Proofs of the main results}

\label{sec:proofs-of-main-results}

In this short section, we present the long-awaited proofs of
\cref{thm:determinism-new,thm:CLT-trans}. For \cref{thm:determinism-new},
which addresses the asymptotic determinism of Schensted insertion, we will take
a detour through \cref{thm:determinism-old}. Although both theorems reach
the same conclusion, the assumptions of \cref{thm:determinism-old} are
framed in terms of Kerov's transition measure.

\subsection{The main idea}

Let $\lambda$ be a random Young diagram, and let $T$ be a uniformly random
Poissonized tableau of shape $\lambda$. Consider a constant $z \in [0, 1]$. The
central concept that underlies all proofs in this section is the relationship
between the tail distribution function of the $u$-coordinate of the newly
inserted box during the insertion $T \leftarrow z$ and the expected value of the
double cumulative function. Specifically, we have:
\begin{equation}
	\label{eq:good-tail}
	 \Pro\left[ \uIns\left( T; z\right) \geq u \right] =  
	    \E \doublecumulative_{\lambda}(u, z)
\end{equation}
for any $u\in\R$. 

It is important to note that when $\lambda$ is deterministic, the above equality
represents the definition of the double cumulative function. In the general
case, this relationship can be established by taking the appropriate average of
both sides of the equation.

\subsection{First-order asymptotics for Schensted insertion}

\subsubsection{Kerov transition measure formulation}

We begin our analysis with a refined version of \cref{thm:determinism-new},
formulated in the language of Kerov transition measure. This formulation not
only operates under weaker assumptions but also serves as a crucial first step
in proving \cref{thm:determinism-new}. 
\begin{theorem}
	\label{thm:determinism-old} 
	Let the following conditions hold:
	\begin{itemize}
		\item 
	For each $n\geq 1$ we are given a random Young diagram $\lambda^{(n)}$. 
	
	\item There is a probability measure
	$\nu$ on the real line such that for each $u\in\R$ which is a continuity point of 
	its \CDF, the limit
	\[   K_{\lambda^{(n)}}\left( \sqrt{n} \ u \right) 
	\xrightarrow[n\to\infty]{P} F_\nu(u) \]
	holds true in probability.
	
	\item For a given real number $0 < z < 1$, the quantile function $F_\nu^{-1}$
is continuous at $z$. 
	\end{itemize}

		Let $T^{(\nklatki)}$ be a uniformly random Poissonized tableau of shape
	$\lambda^{(\nklatki)}$. Then
	\begin{equation}
	\label{eq:insertion-lln}	
	\frac{1}{\sqrt{n}} \uIns\left( T^{(n)}; z\right)  \xrightarrow[n\to\infty]{P}
	F_\nu^{-1}(z).
	\end{equation}
\end{theorem}
\begin{proof}
Our strategy hinges on demonstrating that the limit of the CDF for the random
variable on the left-hand side of \eqref{eq:insertion-lln} converges to a step
function. Specifically, we aim to prove that for any $u \neq u_0$:
	\[ \lim_{n\to\infty} \Pro\left[ \uIns(T^{(n)}; z) < u  \right]  = 
	\begin{cases}
		0 & \text{if } u< u_0, \\
		1 & \text{if } u> u_0.
	\end{cases}
	\]

We begin by considering the case where $u < u_0$. By monotonicity, we can assume
without loss of generality that $u$ is a continuity point of $F_\nu$. Applying
\eqref{eq:good-tail}, we obtain:
	\[ \Pro\left[ \uIns(T^{(n)}; z) < u  \right]  = 1 -\E \doublecumulative_{\lambda^{(n)}}(u, z) 
	\xrightarrow[n\to\infty]{} 0.\]
This convergence to zero is a direct consequence of the second part of \cref{lem:convergence-easy}.

The case where $u > u_0$ follows an analogous reasoning process, completing our
proof of convergence to the step function.
\end{proof}

\subsubsection{Proof of \cref{thm:determinism-new}}
\label{sec:proof-of-determinism}

\begin{proof}[Proof of \cref{thm:determinism-new}]
The first three assumptions of \cref{thm:determinism-new} imply that the random
sequence of continuous diagrams $\omega_n$ converges to $\Omega$ in probability,
as defined by the topology introduced by Kerov (see \cref{prop:homeomorphism}
and the associated references). Consequently, \cref{prop:homeomorphism} becomes
applicable, ensuring that the conditions of \cref{thm:determinism-old} are
satisfied with $\nu := \mu_\Omega$.
\end{proof}

\subsection{Gaussianity of insertion fluctuations: the proof}
\label{sec:proof-thm-CLT-general}

\begin{proof}[Proof of \cref{thm:CLT-trans}]
By applying \cref{thm:gaussian-profile} for the special choice of the
vertical coordinate $\xi=0$, we can demonstrate that the \CDF of the random
variable in question converges to that of a centered normal distribution with
variance $\frac{\Force}{f_0^2}$ as $n \to \infty$. Specifically, for any
$q\in\R$ by Equation \eqref{eq:good-tail}:
\begin{multline*} 
	\Pro\left[	\sqrt[4]{\nklatki} \left[ \frac{\uIns( T^{(\nklatki)}; z_0 )}{\sqrt{\nklatki}} -u_0 \right] <q  \right] = \\ 
1- \E \left[ \doublecumulative_{\lambda^{(n)}}\big( U_{n,q}, Z_{n,0} \big) \right]
\xrightarrow[n\to\infty]{} 1-	\Phi\left(  \frac{- f_0 q}{\sqrt{\Force}} \right)= 
\Phi\left(  \frac{f_0 q}{\sqrt{\Force}} \right).\qedhere
\end{multline*}
\end{proof}

\begin{acks}[Acknowledgments]
We express our gratitude to Márton Balázs, Gaetan Borot, Marek Bożejko, Maciej
Dołęga, Valentin F\'eray, 
Pablo Ferrari, Patrik Ferrari, Mustazee Rahman, and Dan Romik for their
valuable discussions and suggestions regarding the bibliography.

Additionally, we thank Maciej Hendzel for conducting the Monte Carlo experiments
related to \cref{conj:CLT-Plancherel}.
\end{acks}
\begin{funding}
Research was supported by Narodowe Centrum Nauki, grant number \linebreak[4]
2017/26/A/ST1/00189. 

Additionally, the first named author was supported by Narodowe Centrum
Bada\'n i Rozwoju, grant number POWR.03.05.00-00-Z302/17-00.
\end{funding}

\bibliographystyle{imsart-number} %
\bibliography{biblio}       %

\end{document}